  \theoremstyle{definition}
\newtheorem{Def}{Definition}[section]
\newtheorem{ex}[Def]{Example}
\newtheorem{rem}[Def]{Remark}
\theoremstyle{plain}
\newtheorem{prop}[Def]{Proposition}
\newtheorem{thm}[Def]{Theorem}
\newtheorem*{thm*}{Theorem}
\newtheorem{lem}[Def]{Lemma}
\newtheorem{cor}[Def]{Corollary}
\newtheorem*{cor*}{Corollary}
\newtheorem{con}[Def]{Conjecture}
\newtheorem*{con*}{Conjecture}
\newtheorem{frag}[Def]{Question}
\newtheorem*{verm*}{Vermutung}
\newcommand{\Par}{\operatorname{par}}
\newcommand{\arf}{\operatorname{arf}}
\newcommand{\Pic}{\operatorname{Pic}}
\newcommand{\id}{\operatorname{id}}
\newcommand{\shHom}{\operatorname{{\mathscr Hom}}} 
\newcommand{\Spec}{\operatorname{Spec}}
\newcommand{\T}{\operatorname{T}}
\newcommand{\divv}{\operatorname{div}}
\newcommand{\Gal}{\operatorname{Gal}}
\newcommand{\Char}{\operatorname{char}}
\newcommand{\Tr}{\operatorname{Tr}}
\newcommand{\J}{\operatorname{J}}
\newcommand{\tr}{\operatorname{tr}}
\newcommand{\GW}{\operatorname{GW}}
\newcommand{\cO}{{\mathcal O}}
\newcommand{\cT}{{\mathcal T}}
\newcommand{\A}{{\mathbb A}}
\newcommand{\C}{{\mathbb C}}
\newcommand{\R}{{\mathbb R}}
\newcommand{\pp}{\mathbb{P}}
\newcommand{\Q}{{\mathbb Q}}
\newcommand{\N}{{\mathbb N}}
\newcommand{\Z}{{\mathbb Z}}
\newcommand{\sg}{\operatorname{sg}}
\title[A signed count of 2-torsion points]{A signed count of 2-torsion points on real abelian varieties}
\author{Mario Kummer}
\address{Technische Universit\"at, Dresden, Germany} 
\email{mario.kummer@tu-dresden.de}
\thanks{The author has been supported by the DFG under Grant No. 502861109}
\newcommand{\bigperp}{%
  \mathop{\mathpalette\bigp@rp\relax}%
  \displaylimits
}
\newcommand{\bigp@rp}[2]{%
  \vcenter{
    \m@th\hbox{\scalebox{\ifx#1\displaystyle2.1\else1.5\fi}{$#1\perp$}}
  }%
}
\newcommand{\comment}[1]{}
\begin{document}

\subjclass[2010]{Primary: 14P99, 11G10; secondary: 14N15, 14H40}

\begin{abstract}
 We prove that a natural signed count of the $2$-torsion points on a real principally polarized abelian variety $A$ always equals to $2^{g}$ where $g$ is the dimension of $A$. When $A$ is the Jacobian of a real curve we derive signed counts of real odd theta characteristics. These can be interpreted in terms of the extrinsic geometry of contact hyperplanes to the canonical embedding of the curve. We also formulate a conjectural generalization to arbitrary fields in terms of $\mathbb{A}^1$-enumerative geometry.
\end{abstract}
\maketitle

\section{Introduction}
Enumerative geometry classically counts the number of solutions to geometric questions over the complex numbers or, more generally, over an algebraically closed field. While over $\C$ the number of solutions usually is constant for generic instances of the problem, the number of real solutions to a problem posed over $\R$ can vary. One approach to deal with this phenomenon is to attach to each solution an instrinsic sign $\pm1$ such that the sum of these signs over all real solutions does not depend on the specific instance of the enumerative problem anymore. Ideally, such a signed count gives a sharp lower bound on the total number of real solutions. In order to illustrate this principle consider the problem of counting lines on a smooth cubic hypersurface in projective three-space. While over $\C$ the answer is always 27 \cite{cayley1849triple}, there are real smooth cubic surfaces containing 3, 7, 15 or 27 real lines \cite{schlafli1858attempt}. However, Segre \cite{segre} introduced  an instrinsic way to attach a sign $\pm1$ to each real line on a given real cubic surface in way that the sum of these signs always equals to three. Note that this result implies in particular that there must always be at least three lines. As there actually are real smooth cubic surfaces with only three real lines, this is a sharp lower bound. These results where recently generalized and popularized in \cite{FK} and \cite{OK}.
In this article, we introduce such a signed count for the number of real $2$-torsion points on a principally polarized real abelian variety and we prove that it always equals to $2^g$ where $g$ is the dimension of the abelian variety (\Cref{thm:signed2tor}). This is the smallest number of real $2$-torsion points a real abelian variety of dimension $g$ can have. While previously signed counts are mostly realized as the Euler number of a well chosen vector bundle on a parameter space of the enumerative problem, we prove our signed count in situs on the abelian variety. To that end we introduce for every $m\in\N$ and every field $k$ a bilinear pairing on $A_m(k)\times A^\vee_m(k)$ valued in $k^\times/(k^\times)^m$, where $A^\vee$ is the abelian variety dual to the abelian $A$ over $k$ and $A_m(k)$ the group of $k$-rational $m$-torsion points (\Cref{sec:pairing}). In the case $m=2$ and $k=\R$ we can identify $\R^\times/(\R^\times)^2$ with $\{\pm1\}$ and the sign attached to a real $2$-torsion point is then given by the pairing of the point with its image under the principal polarization. By applying these results to the Jacobian variety of a real curve and the principal polarization given by a theta divisor (\Cref{sec:realjac}), we also derive a signed count of real odd theta characteristics (\Cref{thm:oddcount}). This in turn implies signed counts of contact hyperplanes to canonically embedded curves where the signs can be interpreted in terms of the extrinsic geometry of the canonical curves (\Cref{thm:cont1} and its corollaries). In particular, \Cref{cor:cont3} generalizes a signed count of bitangents to plane quartic curves from \cite{larsonvogt}.
The concept of a signed count can be generalized to arbitrary fields $k$ as pioneered in \cite{levasp,kasswickel} where the count takes its value in the Grothendieck--Witt group $\GW(k)$ of $k$. In \Cref{sec:arith} we formulate a conjectural generalization of our signed count to this context and prove it in a few special cases.

\subsection{Conventions}
If $k$ is a field we denote its algebraic closure by $\overline{k}$. If $K$ is a field extension of $k$ and $X$ is a scheme over $k$, we denote by $X(K)$ the set of $K$-rational points of $X$. The base change of $X$ to $K$ is denoted by $X_K$. A variety over $k$ is a separated scheme of finite type over $k$ which is geometrically integral.

\section{Preliminaries on abelian varieties}
In this section we recall some facts on abelian varieties. We mostly stick to the notation from \cite{milneabel}.
Let $A$ be an abelian variety of dimension $g$ over the field $k$.
For any natural number $m\in\N$ not divisible by $\Char(k)$ the morphism $A\to A$ defined by multiplication by $m$ is \'etale of degree $m^{2g}$ \cite[Theorem~8.2]{milneabel}.
We denote by $A_m$ the fiber over the natural element under this morphism, i.e. $A_m$ is the scheme of $m$-torsion points.
We denote by $A^\vee$ the dual abelian variety, see \cite[\S9, \S10]{milneabel}.
\subsection{The Weil pairing}
Let $K/k$ be a field extension, let $m\in\N$ a natural number not divisible by $\Char(k)$ and $\mu_m(K)\subset K^\times$ the group of $m$th roots of unity in $K$. There is a natural bilinear pairing
\begin{equation*}
 e_m\colon  A_m(K)\times A_m^\vee(K)\to\mu_m(K)
\end{equation*}
called the \emph{Weil pairing}. If $K$ is algebraically closed, then $e_m$ is a perfect pairing. The Weil pairing $e_m$ has several equivalent descriptions, see e.g. \cite[\S16]{milneabel}. We will use the following one (see \cite[VI, \S4, Theorem 12]{langabel} for a proof that it agrees with the more standard definition given in \cite[\S16]{milneabel}).
Let $a\in A_m(K)$ and $a'\in A^\vee_m(K)\subset\Pic(A_K)$. Let $a'$ be represented by the divisor $D$ on $A_K$ and assume that the support of $D$ neither contains $a$ nor $0$. Since $A^{\vee\vee}=A$ \cite[\S9]{milneabel}, we can also consider $a$ as an element of $\Pic(A^\vee_K)$ represented by a divisor $D'$ on $A^\vee_K$ whose support neither contains $a'$ nor $0$. There are rational functions $f$ on $A_K$ and $f'$ on $A_K^\vee$  such that $\divv(f)=m\cdot D$ and $\divv(f')=m\cdot D'$.
Then one has
\begin{equation}
 e_m(a,a')=\frac{f'(0)\cdot f(a)}{f(0)\cdot f'(a')}\in\mu_m(K).\label{eq:weilextra}
\end{equation}
If $K/k$ is a Galois extension and $\sigma\in\Gal(K/k)$, then $\sigma(e_m(a,a'))=e_m(\sigma(a),\sigma(a'))$.
If $\lambda\colon  A\to A^\vee$ is a polarization, see \cite[\S13]{milneabel}, then the induced bilinear form 
\begin{equation*}
 A_m(K)\times A_m(K)\to\mu_m(K),\, (a,a')\mapsto e_m(a,\lambda (a'))
\end{equation*} 
is skew-symmetric \cite[Proposition~6.16]{milneabel}.

\section{Another pairing on $m$-torsion points}\label{sec:pairing}
We fix $m\in\N$ which is not divisible by the characteristic of the field $k$. We let $A$ be an abelian variety of dimension $g$ over $k$.
The goal of this section is to define a bilinear pairing on $A_m(k)\times A^\vee_m(k)$ which shares several properties with the Weil pairing $e_m$. However, its target will be the cokernel $k^{\times}/ (k^\times)^m$ rather than the kernel $\mu_m(k)$ of the $m$th power map $k^\times\to k^\times$.

Let $a\in A_m(k)$ and $a'\in A^\vee_m(k)$. Let $a'\in\Pic(A)$ be represented by the divisor $D$ on $A$ whose support neither contains $a$ nor $0$. Since $a'$ is $m$-torsion, there is a rational function $f$ on $A$ such that $\divv(f)=m\cdot D$. We define 
\begin{equation*}
 b_m(a,a'):= \frac{f(a)}{f(0)}\in k^{\times}/ (k^\times)^m.
\end{equation*}
This is well-defined since the rational function $\tilde{f}$ obtained from another choice $\tilde{D}$ of a divisor representing $a'$ would be of the form $\tilde{f}=f\cdot h^m$ where $\divv(h)=D'-D$. It follows directly from the construction that $b_m$ is linear in the second factor:

\begin{lem}\label{lem:linear}
 For all $a\in A_m(k)$ and $a_1',a_2'\in A_m^\vee(k)$ we have \begin{equation*}
 b_m(a,a_1'+a_2')=b_m(a,a_1')\cdot b_m(a,a_2').
\end{equation*}
\end{lem}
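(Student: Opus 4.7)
The plan is to unwind the definition of $b_m$ and exploit the fact that addition in $\Pic(A)$ corresponds to addition of divisors (equivalently, multiplication of local equations), together with the well-definedness of $b_m$ that was just established.

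First I would pick divisors $D_1$ and $D_2$ on $A$ representing $a_1'$ and $a_2'$ respectively, whose supports contain neither $a$ nor $0$. Such choices exist because any divisor class can be represented by a divisor missing finitely many prescribed points, and any modification gives the same value of $b_m$ by the well-definedness argument mentioned right before \Cref{lem:linear}. Since the $a_i'$ are $m$-torsion, there are rational functions $f_1, f_2$ on $A$ with $\divv(f_i) = m \cdot D_i$, so by definition
\begin{equation*}
 b_m(a,a_i') = \frac{f_i(a)}{f_i(0)} \in k^{\times}/(k^\times)^m.
\end{equation*}

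Next, the sum $a_1' + a_2' \in A_m^\vee(k) \subset \Pic(A)$ is represented by the divisor $D_1 + D_2$, whose support is contained in $\Supp(D_1) \cup \Supp(D_2)$ and therefore also contains neither $a$ nor $0$. Hence $D_1 + D_2$ is an admissible choice of representative for $a_1' + a_2'$ in the definition of $b_m(a, a_1' + a_2')$. Moreover, the product $f_1 \cdot f_2$ is a rational function on $A$ satisfying
\begin{equation*}
 \divv(f_1 \cdot f_2) = \divv(f_1) + \divv(f_2) = m \cdot D_1 + m \cdot D_2 = m \cdot (D_1 + D_2),
\end{equation*}
so it is an admissible choice for the auxiliary function in computing $b_m(a,a_1'+a_2')$.

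Evaluating with these choices gives
\begin{equation*}
 b_m(a, a_1' + a_2') = \frac{(f_1 f_2)(a)}{(f_1 f_2)(0)} = \frac{f_1(a)}{f_1(0)} \cdot \frac{f_2(a)}{f_2(0)} = b_m(a, a_1') \cdot b_m(a, a_2')
\end{equation*}
in $k^{\times}/(k^\times)^m$, which is the claim. No step looks like an obstacle; the only point requiring any care is choosing the divisors $D_i$ to avoid $a$ and $0$ simultaneously, which is immediate since the finite set $\{a,0\}$ can always be moved out of the support of a given divisor by passing to a linearly equivalent one, after which well-definedness of $b_m$ takes over.
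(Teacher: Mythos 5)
Your proof is correct and is precisely the argument the paper has in mind: the paper simply asserts that linearity in the second factor ``follows directly from the construction,'' and your unwinding of the definition --- representing $a_1'+a_2'$ by $D_1+D_2$ and using $f_1f_2$ as the auxiliary function, with well-definedness handling the choice of representatives --- is exactly that construction made explicit.
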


By carrying out the same construction for $A^\vee$ instead of $A$, we obtain a pairing 
\begin{equation*}
 b_m'\colon A_m^\vee(k)\times A_m(k)\to k^{\times}/ (k^\times)^m
\end{equation*}
since $A=(A^\vee)^\vee$. The two pairings $b_m$ and $b_m'$ are related via the Weil pairing. Indeed, let $a\in A_m(k)$ and $a'\in A_m^\vee(k)$. Let $a'$ be represented by a divisor $D$ on $A$ whose support neither contains $a$ nor $0$. Similarly, let $D'$ be a divisor on $A^\vee$ representing $a$ and whose support neither contains $b$ nor $0$. Let $f$ and $f'$ be rational functions on $A$ and $A^\vee$ respectively such that $\divv(f)=m\cdot D$ and $\divv(f')=m\cdot D'$. Then we have
\begin{equation}\label{eq:weilextra2}
 \frac{f(a)}{f(0)}=e_m(a,a')\cdot \frac{f'(a')}{f'(0)}
\end{equation}
 by the definition of the Weil pairing $e_m$ that we gave in \Cref{eq:weilextra}. In particular, \Cref{eq:weilextra2} implies the identity 
 \begin{equation}\label{eq:weilbm}
  b_m(a,a')=e_m(a,a')\cdot b_m'(a',a).
 \end{equation}
 The following theorem summarizes the properties of the pairing $b_m$.

\begin{thm}\label{thm:pairing}
 The pairing $$b_m\colon A_m(k)\times A_m^\vee(k)\to k^{\times}/ (k^\times)^m$$ satisfies the following for all $a,a_1,a_2\in A_m(k)$ and $a',a_1',a_2'\in A_m^\vee(k)$:
 \begin{enumerate}[(i)]
  \item $b_m(a,a_1'+a_2')=b_m(a,a_1')\cdot b_m(a,a_2')$;
  \item $b_m(a_1+a_2,a')=b_m(a_1,a')\cdot b_m(a_2,a')$;
  \item $b_m(a,a')=e_m(a,a')\cdot b_m'(a',a)$ where $e_m$ is the Weil pairing.
 \end{enumerate}
\end{thm}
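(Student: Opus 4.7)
The three claims can essentially be assembled from material already in place. Property~(i) is literally \Cref{lem:linear}, so there is nothing new to prove. Property~(iii) is exactly the identity \Cref{eq:weilbm} that was derived just above the theorem from the divisorial description of the Weil pairing in \Cref{eq:weilextra}; one only needs to note that for any choice of $a\in A_m(k)$ and $a'\in A^\vee_m(k)$ one may pick divisors $D,D'$ on $A,A^\vee$ representing $a',a$ whose supports avoid $\{0,a\}$ and $\{0,a'\}$ respectively (a general translate works, since the pairing is independent of the chosen representative), so that \Cref{eq:weilextra2} and hence \Cref{eq:weilbm} make sense.

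The only genuinely new statement is therefore the left-linearity in (ii). My plan is to bootstrap it from (i) and (iii). Apply (iii) to the sum $a_1+a_2\in A_m(k)$ and $a'\in A_m^\vee(k)$ to rewrite
\begin{equation*}
  b_m(a_1+a_2,a') \;=\; e_m(a_1+a_2,a')\cdot b_m'(a',a_1+a_2).
\end{equation*}
Now use bilinearity of the Weil pairing (recalled at the end of the preliminaries section) to split the first factor, and use \Cref{lem:linear} applied to $A^\vee$ in place of $A$ — which yields the symmetric statement that $b_m'$ is linear in its \emph{second} argument — to split the second factor. This gives
\begin{equation*}
  b_m(a_1+a_2,a') \;=\; \bigl(e_m(a_1,a')\cdot b_m'(a',a_1)\bigr)\cdot\bigl(e_m(a_2,a')\cdot b_m'(a',a_2)\bigr),
\end{equation*}
and a second application of (iii), now to each of $(a_1,a')$ and $(a_2,a')$, identifies the right-hand side with $b_m(a_1,a')\cdot b_m(a_2,a')$, as required.

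There is no serious obstacle here: the one point to be careful about is the legitimacy of invoking \Cref{lem:linear} for the \emph{dual} pairing $b_m'$. This is justified because $b_m'$ was defined by running exactly the same construction with $A$ and $A^\vee$ interchanged (using $A=(A^\vee)^\vee$), so the proof of \Cref{lem:linear} — which was a purely formal consequence of the fact that divisor classes representing a sum can be added — applies verbatim. Once this symmetric version is in hand, the rest of the argument is pure bookkeeping in the abelian group $k^\times/(k^\times)^m$.
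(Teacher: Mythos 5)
Your proof is correct and is essentially identical to the paper's: the paper likewise dismisses (i) and (iii) as already established and derives (ii) by applying (iii) to $a_1+a_2$, splitting via bilinearity of $e_m$ and linearity of $b_m'$ in its second argument, and reassembling with (iii). Your explicit remark that the linearity of $b_m'$ is \Cref{lem:linear} applied to $A^\vee$ is a point the paper leaves implicit (it cites only ``(i)''), but this is a clarification, not a different argument.
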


\begin{proof}
 $(i)$ is \Cref{lem:linear} and $(iii)$ was observed in \Cref{eq:weilbm}. Then $(ii)$ follows from $(i)$, $(iii)$ and the bilinearity of the Weil pairing:
 \begin{align*}
  b_m(a_1+a_2,a')&=e_m(a_1+a_2,a')\cdot  b_m'(a',a_1+a_2)\\
  &= e_m(a_1,a')\cdot b_m'(a',a_1)\cdot e_m(a_2,a')\cdot b_m'(a',a_2)\\
  &=b_m(a_1,a')\cdot b_m(a_2,a'). \qedhere
 \end{align*}
\end{proof}

The pairing $b_m$ is also compatible with principal polarizations.

\begin{lem}\label{lem:princpair}
 Let $\lambda\colon A\to A^\vee$ a principal polarization and $a_1,a_2\in A_m(k)$. Then
 \begin{equation*}b_m(a_1,\lambda(a_2))=b_m'(\lambda(a_1),a_2).\end{equation*}
\end{lem}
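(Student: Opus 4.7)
The plan is to reduce the left-hand side to the right-hand side by pulling back, via the isomorphism $\lambda$, the divisor and rational function that appear in the definition of $b_m'(\lambda(a_1),a_2)$. Concretely, I will choose a divisor $D'$ on $A^\vee$ representing $a_2 \in A_m(k) \subset A^{\vee\vee}(k) = \Pic^0(A^\vee)(k)$, with support avoiding $0$ and $\lambda(a_1)$, together with a rational function $f'$ on $A^\vee$ satisfying $\divv(f') = mD'$. By the definition of $b_m'$,
\begin{equation*}
b_m'(\lambda(a_1),a_2) = \frac{f'(\lambda(a_1))}{f'(0)}.
\end{equation*}

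The key step is then to check that $\lambda^*D'$ is an admissible divisor on $A$ representing $\lambda(a_2) \in A^\vee = \Pic^0(A)$ in the definition of $b_m(a_1,\lambda(a_2))$. I would deduce this from the self-duality $\lambda^\vee = \lambda$ which characterizes polarizations: at the level of line bundles, the dual morphism $\lambda^\vee \colon A^{\vee\vee} \to A^\vee$ sends a point $a \in A^{\vee\vee}$, regarded as a line bundle $M_a$ on $A^\vee$, to $\lambda^*M_a \in \Pic^0(A)$. Applied to $a_2$ this shows that $\lambda^*\cO_{A^\vee}(D')$ represents $\lambda^\vee(a_2) = \lambda(a_2)$. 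Because $\lambda$ is an isomorphism, the support of $\lambda^*D'$ equals $\lambda^{-1}(\Supp(D'))$, which by the choice of $D'$ avoids $0$ and $a_1$, so $\lambda^*D'$ is admissible.

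To finish, I will pull back $f'$ as well: the rational function $f' \circ \lambda$ on $A$ satisfies $\divv(f' \circ \lambda) = m\,\lambda^*D'$, and hence by the definition of $b_m$ and the fact that $\lambda$ is a group homomorphism (so $\lambda(0) = 0$),
\begin{equation*}
b_m(a_1,\lambda(a_2)) = \frac{(f' \circ \lambda)(a_1)}{(f' \circ \lambda)(0)} = \frac{f'(\lambda(a_1))}{f'(0)} = b_m'(\lambda(a_1),a_2).
\end{equation*}

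The only non-formal input is the identification, at the level of line bundles, of pullback along $\lambda$ with the action of $\lambda^\vee$; this is precisely the place where the assumption that $\lambda$ is a polarization (and is therefore self-dual) is used, as opposed to an arbitrary isogeny $A \to A^\vee$. Once that identification is in hand the rest is straightforward bookkeeping, with no further appeal to the Weil pairing or to the earlier identity $b_m(a,a') = e_m(a,a') \cdot b_m'(a',a)$.
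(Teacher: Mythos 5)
Your proposal is correct and follows essentially the same route as the paper: both arguments hinge on identifying $\lambda^\vee\colon A^{\vee\vee}\to A^\vee$ with pullback of degree-zero line bundles along $\lambda$, invoking the polarization identity $\lambda=\lambda^\vee\circ\iota$ to see that $\lambda^{-1}(D')$ represents $\lambda(a_2)$, and then transporting the rational function across the isomorphism $\lambda$. The only cosmetic difference is that you transport $f'$ forward via $f'\circ\lambda$ while the paper introduces $f$ on $A$ and composes with $\mu=\lambda^{-1}$; these are the same computation.
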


\begin{proof}
 Let $\iota\colon A\to(A^\vee)^\vee$ the canonical isomorphism and $D'$ a divisor on $A^\vee$ that represents $\iota(a_2)$ whose support does not contain $0$ or $\lambda(a_1)$. The homomorphism $\lambda\colon A\to A^\vee$ induces a homomorphism $\lambda^\vee\colon (A^\vee)^\vee\to A^\vee$ which corresponds to the pull back of degree zero line bundles on $A^\vee$ to $A$ via $\lambda$. Thus $\lambda^\vee(\iota(a_2))$ is represented by $D:=\lambda^{-1}(D')$. Since $\lambda$ is a polarization, we have $\lambda=\lambda^\vee \circ\iota$. Thus $\lambda(a_2)=\lambda^\vee(\iota(a_2))$ is represented by the divisor $D$. 
 Let $f$ be a rational function on $A$ such that $\divv(f)=m\cdot D=m\cdot \lambda^{-1}(D')$ and let $\mu\colon A^\vee\to A$ the inverse of $\lambda$. Then we have \begin{equation*}\frac{f(a_1)}{f(0)}=\frac{(f\circ\mu)(\lambda(a_1))}{(f\circ\mu)(0)}.\end{equation*}
 The left-hand side is $b_m(a_1,\lambda(a_2))$ by definition.
 Because $\divv(f\circ\mu)=m\cdot D'$, the right-hand side is $b_m'(\lambda(a_1),a_2)$.
\end{proof}

\begin{cor}\label{cor:bande}
 Let $\lambda\colon A\to A^\vee$ a principal polarization and $a_1,a_2\in A_m(k)$. Then $$b_m(a_1,\lambda(a_2))=e_m(a_1,\lambda(a_2))\cdot b_m(a_2,\lambda(a_1)).$$
\end{cor}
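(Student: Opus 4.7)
The plan is to combine Theorem~\ref{thm:pairing}(iii) with Lemma~\ref{lem:princpair}, essentially by substituting the latter into the former after an appropriate index swap. These two results are exactly the ingredients needed, so no further geometric input should be required.

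First I would apply Theorem~\ref{thm:pairing}(iii) to the pair $(a_1,\lambda(a_2))\in A_m(k)\times A_m^\vee(k)$, which immediately gives
\begin{equation*}
 b_m(a_1,\lambda(a_2))=e_m(a_1,\lambda(a_2))\cdot b_m'(\lambda(a_2),a_1).
\end{equation*}
This already produces the desired Weil pairing factor on the right-hand side, so all that remains is to rewrite the auxiliary term $b_m'(\lambda(a_2),a_1)$ as $b_m(a_2,\lambda(a_1))$.

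For this I would invoke Lemma~\ref{lem:princpair}, but with the roles of $a_1$ and $a_2$ interchanged: applying it to the pair $(a_2,a_1)$ yields $b_m(a_2,\lambda(a_1))=b_m'(\lambda(a_2),a_1)$. Substituting this identity back into the previous displayed equation then gives the claimed formula $b_m(a_1,\lambda(a_2))=e_m(a_1,\lambda(a_2))\cdot b_m(a_2,\lambda(a_1))$.

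I do not anticipate any real obstacle here, since the argument is purely formal and uses each of the two cited results exactly once. The only point to be slightly careful about is bookkeeping of which variable plays which role in Lemma~\ref{lem:princpair}, because the asymmetric appearance of $b_m$ versus $b_m'$ makes it tempting to apply the lemma in the wrong direction; but since $\lambda$ is a principal polarization the statement is genuinely symmetric in $a_1,a_2$ after the swap, so this is only a notational matter.
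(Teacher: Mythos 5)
Your proof is correct and is essentially identical to the paper's: both apply Theorem~\ref{thm:pairing}(iii) to the pair $(a_1,\lambda(a_2))$ and then use Lemma~\ref{lem:princpair} with the roles of $a_1$ and $a_2$ interchanged to rewrite $b_m'(\lambda(a_2),a_1)$ as $b_m(a_2,\lambda(a_1))$. No issues.
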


\begin{proof}
 We have $b_m(a_2,\lambda(a_1))=b_m'(\lambda(a_2),a_1)$ by \Cref{lem:princpair} and 
 \begin{equation*}
  e_m(a_1,\lambda(a_2))\cdot b_m'(\lambda(a_2),a_1)=b_m(a_1,\lambda(a_2))
 \end{equation*}
 by \Cref{thm:pairing}(iii).
\end{proof}

For the rest of the section we focus on the case $m=2$.

\begin{Def}\label{def:q2form}
 Let $\lambda\colon A\to A^\vee$ a principal polarization. We define
 \begin{equation*}
  q_2\colon A_2(k)\to k^{\times}/ (k^\times)^2,\, a\mapsto b_2(a,\lambda(a)).
 \end{equation*}
\end{Def}

\begin{prop}\label{thm:q2prop}
 Let $\lambda\colon A\to A^\vee$ a principal polarization.
 For all $a_1,a_2\in A_2(k)$ we have $q_2(a_1+a_2)=e_2(a_1,\lambda(a_2))\cdot q_2(a_1)\cdot q_2(a_2)$.
\end{prop}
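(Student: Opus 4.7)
The plan is to compute $q_2(a_1+a_2)=b_2(a_1+a_2,\lambda(a_1+a_2))$ by directly expanding via the bilinearity of $b_2$ (parts (i) and (ii) of \Cref{thm:pairing}) together with the linearity of $\lambda$, and then to use \Cref{cor:bande} to recognize the cross terms as a Weil pairing value.

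First I would write
\begin{equation*}
 q_2(a_1+a_2)=b_2(a_1+a_2,\lambda(a_1)+\lambda(a_2))=q_2(a_1)\cdot q_2(a_2)\cdot b_2(a_1,\lambda(a_2))\cdot b_2(a_2,\lambda(a_1))
\end{equation*}
using the two bilinearity statements. Next I would apply \Cref{cor:bande}, which gives $b_2(a_1,\lambda(a_2))=e_2(a_1,\lambda(a_2))\cdot b_2(a_2,\lambda(a_1))$, to rewrite the cross-term product as
\begin{equation*}
 b_2(a_1,\lambda(a_2))\cdot b_2(a_2,\lambda(a_1))=e_2(a_1,\lambda(a_2))\cdot b_2(a_2,\lambda(a_1))^2.
\end{equation*}

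The crucial (and essentially trivial) observation is that since we are in $k^\times/(k^\times)^2$, the square $b_2(a_2,\lambda(a_1))^2$ is the trivial class. Combining these yields $q_2(a_1+a_2)=e_2(a_1,\lambda(a_2))\cdot q_2(a_1)\cdot q_2(a_2)$, as claimed.

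There is no real obstacle here: the statement is a formal consequence of bilinearity of $b_2$, the symmetry-type relation in \Cref{cor:bande}, and the fact that $m=2$ kills squares in the target group. The only thing to be careful about is the choice of divisors representing the classes $\lambda(a_i)$ (so that $b_2$ is actually defined on the relevant pairs), but this is a mild genericity condition already absorbed into the definition of $b_2$ and $q_2$.
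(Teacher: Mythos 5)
Your proof is correct and follows essentially the same route as the paper's: expand $q_2(a_1+a_2)$ by bilinearity of $b_2$ (parts (i) and (ii) of \Cref{thm:pairing}), convert the cross terms via \Cref{cor:bande}, and observe that the resulting square $b_2(a_2,\lambda(a_1))^2$ is trivial in $k^\times/(k^\times)^2$. No discrepancies to report.
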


\begin{proof}
 We have by (i) and (ii) in \Cref{thm:pairing}: 
 \begin{equation*}
 q_2(a_1+a_2)=q_2(a_1)\cdot q_2(a_2)\cdot b_2(a_1,\lambda(a_2))\cdot b_2(a_2,\lambda(a_1)). 
 \end{equation*}
 Using \Cref{cor:bande} we now obtain:
 \begin{equation*}
 q_2(a_1)\cdot q_2(a_2)\cdot b_2(a_1,\lambda(a_2))\cdot b_2(a_2,\lambda(a_1))=q_2(a_1)\cdot q_2(a_2)\cdot e_2(a_1,\lambda(a_2))\cdot b_2(a_2,\lambda(a_1))^2. 
 \end{equation*} 
This implies the claim.
\end{proof}

\begin{ex}
 Consider the elliptic curve $(E,O)$ defined over $\Q$ by
 \begin{equation*}
  y^2=x^3-x.
 \end{equation*}
 The three nontrivial $2$-torsion points of $E$ are $P_1=(-1,0)$, $P_2=(0,0)$ and $P_3=(1,0)$. We consider the standard principal polarization $\lambda$ that sends a point $P$ to the divisor class of $P-O$. A basis of $E_2(\Q)$ considered as vector space over the field of two elements is given by $P_1,P_2$. With respect to this basis the bilinear forms $b_2(-,\lambda(-))$ and $e_2(-,\lambda(-))$ are represented by the following two matrices:
 \begin{equation*}
  \begin{pmatrix}
   2& -1\\
   1& -1
  \end{pmatrix} \textnormal{ and }
  \begin{pmatrix}
   1& -1\\
   -1& 1
  \end{pmatrix}.
 \end{equation*}
 The entries of the left matrix are considered to be square classes and the entries of the right matrix are from $\mu_2(\Q)=\{\pm1\}$. Finally, we have $q_2(P_1)=2$, $q_2(P_2)=-1$ and $q_2(P_3)=2$.
\end{ex}

\subsection{Jacobians of curves}
Let $X$ be a smooth projective geometrically irreducible curve of genus $g$ over $k$ with $P\in X(k)$. Let $J$ be the Jacobian variety of $X$. 
As a reference for the basic definitions and statements used in the following, we recommend \cite{MR861976}.
We denote by $X^{(n)}$ the $n$th symmetric product of $X$ and we identify elements of $X^{(n)}$ with effective divisors of degree $n$ on $X$. The morphism
\begin{equation*}
\pi\colon X^{(g)}\to J,\, D\mapsto [D-gP]
\end{equation*}
is surjective and birational. The image of $X^{(g-1)}\to X^{(g)},\, D\mapsto P+D$ composed with $\pi$ is a divisor $\Theta$ on $J$ called the \emph{theta divisor}. The map 
\begin{equation}\label{eq:princpolthet}
\lambda\colon J\to J^\vee,\, a\mapsto [t^*_a\Theta-\Theta]
\end{equation}
is a principal polarization. Here $t_a\colon J\to J$ is translation by $a$. The goal of this subsection is to describe the pairing $b_m(-,\lambda(-))$ solely in terms of the curve $X$. To this end let $a_1,a_2\in J_m(k)$. Let $a_1$ be represented by a divisor $D_1=\sum_{i=1}^g(Q_i-P)$ with $Q_i\in X(\bar{k})$ which is invariant under the action of the absolute Galois group of $k$. Similarly, let $a_2$ be represented by the divisor $D_2=\sum_{i=1}^r(B_i-A_i)$ with $A_i,B_i\in X(\bar{k})\setminus\{P,Q_1,\ldots,Q_g\}$.

\begin{lem}
 The element $\lambda(a_2)\in\Pic(J)$ is represented by the divisor
 \begin{equation*}
  \bar{D}_2:=\sum_{i=1}^r(t^*_{P-A_i}\Theta-t^*_{P-B_i}\Theta)
 \end{equation*}
 on $J$.
\end{lem}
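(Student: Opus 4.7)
The plan is to exploit the additivity of the polarization map $\lambda$ and reduce everything to the definition on a single class of the form $[P-Q]$. Recall that $\lambda\colon J\to J^\vee$, $a\mapsto [t_a^*\Theta-\Theta]$, is a group homomorphism; this is essentially a reformulation of the theorem of the square, which gives $t_{a+b}^*\Theta+\Theta\sim t_a^*\Theta+t_b^*\Theta$ in $\Pic(J)$ for all $a,b\in J(\bar k)$. Once this additivity is granted, the computation is almost formal.

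I would then decompose $a_2$ in $J(\bar k)$ via
\[
a_2=\sum_{i=1}^r[B_i-A_i]=\sum_{i=1}^r\bigl([P-A_i]-[P-B_i]\bigr),
\]
using the obvious identity $[B_i-A_i]=[(P-A_i)-(P-B_i)]$. Applying $\lambda$ termwise and using its additivity,
\[
\lambda(a_2)=\sum_{i=1}^r\bigl(\lambda([P-A_i])-\lambda([P-B_i])\bigr)=\sum_{i=1}^r\bigl([t_{P-A_i}^*\Theta-\Theta]-[t_{P-B_i}^*\Theta-\Theta]\bigr).
\]
The two copies of $\Theta$ in each summand cancel, yielding $\lambda(a_2)=[\bar D_2]$ in $\Pic(J)$ exactly as claimed.

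The remaining point is to verify that $\bar D_2$ is actually a divisor on $J$, i.e.\ is defined over $k$. This follows from the hypothesis that $D_2=\sum_i(B_i-A_i)$ is Galois-invariant: any $\sigma\in\Gal(\bar k/k)$ acts on the finite set of pairs $(A_i,B_i)$ by a permutation that preserves $D_2$, hence permutes the corresponding summands $t_{P-A_i}^*\Theta-t_{P-B_i}^*\Theta$ and fixes $\bar D_2$. Also $\bar D_2$ avoids $0$ and $\lambda(a_1)$ in the support thanks to the chosen position of $A_i,B_i$ (one can always translate $D_2$ within its class if needed, but the author's setup has already arranged this).

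The only real obstacle is the additivity of $\lambda$; everything else is bookkeeping with divisor classes and signs. I would therefore be careful in writing out the cancellation of the $\Theta$ terms and in checking that the decomposition $[B_i-A_i]=[P-A_i]-[P-B_i]$ has the correct sign convention to match the formula for $\bar D_2$.
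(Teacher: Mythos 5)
Your proof is correct and is essentially the paper's own argument: the paper likewise invokes the theorem of the square (which is exactly the additivity of $\lambda$ you use) to rewrite $\sum_i(t^*_{P-A_i}\Theta-t^*_{P-B_i}\Theta)$ as $t^*_{D_2}\Theta-\Theta=\lambda(a_2)$, just combining the translations on the divisor side rather than applying $\lambda$ termwise. The sign bookkeeping and the Galois-invariance remark are fine (minor slip aside: the support condition in the surrounding setup is about avoiding $a_1$ and $0$, not $\lambda(a_1)$, but that is not part of this lemma).
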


\begin{proof}
 This follows from the theorem of the square \cite[Theorem~6.7]{milneabel}:
 \begin{eqnarray*}
  [\sum_{i=1}^r(t^*_{P-A_i}\Theta-t^*_{P-B_i}\Theta)]&=& [t^*_{\Sigma_{i=1}^r(P-A_i)}\Theta-t^*_{\Sigma_{i=1}^r(P-B_i)}\Theta ]\\
  &=&[t^*_{\Sigma_{i=1}^r(B_i-A_i)}\Theta-\Theta]\\
  &=&[t^*_{D_2}\Theta-\Theta].
 \end{eqnarray*}
 The last expression equals to $\lambda(a_2)$ since $D_2$ represents $a_2$.
\end{proof}

Let $f$ be a rational function on $X$ such that $\divv(f)=mD_2$. Let $\tilde{f}$ the rational function on $X^{(g)}$ defined by $P_1+\ldots+P_g\mapsto{f(P_1)\cdots f(P_g)}$. Its divisor equals to $(\sum_{i=1}^rS_{B_i}-S_{A_i})$ where $S_Q:=\{D\in X^{(g)}\mid D-Q\geq0\}$. The image of $S_Q$ under $\pi$ is the divisor $t^*_{P-Q}\Theta=\{[D]\in J\mid D+gP-Q\geq0\}$. Via $\pi$ we can consider $\tilde{f}$ as rational function on $J$ and its divisor is then precisely $-m\bar{D}_2$. We further have $\tilde{f}(a_1)=f(Q_1)\cdots f(Q_g)$ and $\tilde{f}(0)=f(P)^g$. We therefore obtain
\begin{equation}\label{eq:bmjac}
 b_m(a_1,\lambda(a_2))=\frac{f(P)^g}{f(Q_1)\cdots f(Q_g)}\in k^{\times}/ (k^\times)^m.
\end{equation}

We spend the rest of the section making this description independent of the choice of $P$. For this we need to set up some notation.

\begin{Def}
 Let $g$ a nonzero rational function on $X$ and $D=\sum n_P\cdot P$ a divisor such that $\divv(g)$ and $D$ have disjoint supports. Then we define \[g(D)=\prod g(P)^{n_P} \in k^* .\]
\end{Def}

Note that if $\deg(D)=0$, then $g(D)$ only depends on $\divv(g)$ (rather than on $g$) because multiplying $g$ with a nonzero scalar does not affect $g(D)$ in this case. The central ingredient for what follows is Weil's reciprocity law, see for example \cite[Ex.~2.11]{silverman}.

\begin{thm}[Weil reciprocity]
 Let $g_1,g_2$ nonzero rational functions on $X$ such that $\divv(g_1)$ and $\divv(g_2)$ have disjoint supports. Then $g_1(\divv(g_2))=g_1(\divv(g_2))$.
\end{thm}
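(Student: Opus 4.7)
The plan is to reduce to the case $X=\mathbb{P}^1$ by using the finite morphism $\phi\colon X\to \mathbb{P}^1$ defined by $g_2$, and then dispatch the $\mathbb{P}^1$ case by direct computation. Throughout, set $\tau(g_1,g_2):=g_1(\divv(g_2))/g_2(\divv(g_1))\in k^\times$; the claim is $\tau(g_1,g_2)=1$.

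For the reduction, let $\phi\colon X\to \mathbb{P}^1$ be the morphism with $g_2=\phi^*t$ for $t$ the standard affine coordinate, and let $h:=\mathrm{Nm}_\phi(g_1)\in k(\mathbb{P}^1)^\times$ be the norm of $g_1$ under the finite field extension $k(X)/k(\mathbb{P}^1)$. I will rely on two standard facts: (a) $\divv(h)=\phi_*\divv(g_1)$, and (b) for any closed point $Q\in \mathbb{P}^1$ not in $\divv(h)$, $h(Q)=\prod_{P\in \phi^{-1}(Q)} g_1(P)^{e_P}$, with $e_P$ the ramification index at $P$. Since $\divv(g_2)=\phi^*\divv(t)$, unpacking $g_1(\divv(g_2))=\prod_P g_1(P)^{v_P(g_2)}$ and grouping by the fibers over $0$ and $\infty$ gives, using (b),
\begin{equation*}
g_1(\divv(g_2))=\frac{h(0)}{h(\infty)}=h(\divv(t)).
\end{equation*}
On the other hand, $g_2(\divv(g_1))=\prod_P t(\phi(P))^{v_P(g_1)}=t(\phi_*\divv(g_1))=t(\divv(h))$ by (a). Hence $\tau_X(g_1,g_2)=\tau_{\mathbb{P}^1}(h,t)$, so it suffices to prove the theorem for $X=\mathbb{P}^1$ and $g_2=t$.

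For the base case $X=\mathbb{P}^1$, $g_2=t$, write $g_1=c\cdot\prod_i(x-a_i)/\prod_j(x-b_j)$ with $c\in k^\times$ and $a_i,b_j\in k$. Disjointness of $\divv(g_1)$ and $\divv(t)=[0]-[\infty]$ forces all $a_i,b_j\in k^\times$ and the numerator and denominator to have equal degree, so that $g_1$ has neither a zero nor a pole at $\infty$. A direct calculation then yields $g_1(0)/g_1(\infty)=\prod_i a_i/\prod_j b_j=t(\divv(g_1))$, so $\tau(g_1,t)=1$, and combined with the reduction this proves the theorem.

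The main technical point is keeping the ramification indices straight when moving between $\divv(g_2)=\phi^*\divv(t)$ and the norm formula in (b); once this compatibility of pullback, pushforward, and the norm is in place, the rest is bookkeeping and the $\mathbb{P}^1$ computation. An alternative reorganization would be to package the argument through tame symbols, setting $(g_1,g_2)_P:=(-1)^{v_P(g_1)v_P(g_2)}g_1^{v_P(g_2)}g_2^{-v_P(g_1)}(P)$, whose vanishing product over all $P$ is equivalent to the desired identity under the disjointness assumption; but the reduction to $\mathbb{P}^1$ via the norm is the most hands-on route.
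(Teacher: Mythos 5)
The paper does not prove this statement at all: it is quoted as a known result with a pointer to the literature (Silverman, Ex.~2.11), so there is no in-paper argument to compare against. Your proof is the standard norm-reduction argument and it is essentially correct; note first that the displayed statement contains a typo (it reads $g_1(\divv(g_2))=g_1(\divv(g_2))$), and you have correctly proved the intended identity $g_1(\divv(g_2))=g_2(\divv(g_1))$. A few points deserve tightening. In fact (b) the right hypothesis is that no point of $\phi^{-1}(Q)$ is a zero or pole of $g_1$, not that $Q\notin\Supp(\divv(h))$: since $\divv(h)=\phi_*\divv(g_1)$ can have cancellation, the latter does not imply the former in general. In your application $Q\in\{0,\infty\}$ and the disjointness of $\Supp(\divv(g_1))$ and $\Supp(\divv(g_2))=\Supp(\phi^*\divv(t))$ gives exactly the stronger condition, so the argument goes through, but you should say so. Your base case factors $g_1$ with all roots $a_i,b_j\in k$, which tacitly assumes $k$ algebraically closed; since both sides of the identity are unchanged under base change to $\bar k$ (and the paper's evaluation $g(D)$ is anyway computed on $\bar k$-points), you should reduce to $\bar k$ at the outset. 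You also implicitly assume $g_2$ is nonconstant so that $\phi$ is a finite morphism; the constant case is trivial (both sides are empty products, using $\deg\divv(g_1)=0$) but should be dismissed explicitly. Finally, in the closing remark the product of the tame symbols equals $1$ rather than ``vanishes,'' and the signs $(-1)^{v_P(g_1)v_P(g_2)}$ all equal $1$ under the disjointness hypothesis. With these small repairs the proof is complete and is the expected one.
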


\begin{thm}\label{thm:curvepairing}
 Let $a_1,a_2\in J_m(k)$ represented by divisors $D_1,D_2$ on $X$ with disjoint supports and $\lambda: J\to J^\vee$ the principal polarization defined in \Cref{eq:princpolthet} by the choice of $\Theta$. Let $f$ be a rational function on $X$ such that $\divv(f)=mD_2$. Then
 \begin{equation*}
  b_m(a_1,\lambda(a_2))=f(-D_1)\in k^{\times}/ (k^\times)^m.
 \end{equation*}
\end{thm}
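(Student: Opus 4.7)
The plan has two steps: first, observe that \Cref{eq:bmjac} already delivers the desired formula in the special case where $D_1 = \sum_{i=1}^{g}(Q_i-P)$; second, use Weil reciprocity to show that $f(-D_1) \bmod (k^\times)^m$ is independent of the choice of representative $D_1$ of the class $a_1$ (subject to the disjoint-support hypothesis).

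For the first step, unwind the definition of $f(-D_1)$: with $D_1 = \sum_{i=1}^{g}(Q_i-P)$ we have $-D_1 = gP - \sum_{i=1}^{g} Q_i$, so
\begin{equation*}
 f(-D_1) = \frac{f(P)^g}{f(Q_1)\cdots f(Q_g)},
\end{equation*}
which is exactly the right-hand side of \Cref{eq:bmjac}. Note that this is a Galois-invariant product, hence lies in $k^\times$, even though the $Q_i$ may only be defined over $\bar{k}$. The existence of such a representative $D_1$ with $P,Q_1,\ldots,Q_g$ avoiding $\operatorname{supp}(D_2)$ is ensured by Riemann--Roch together with a standard moving argument and was already implicit in the derivation of \Cref{eq:bmjac}.

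For the second step, suppose $D_1$ and $D_1'$ are two divisors representing $a_1$, both with support disjoint from $\operatorname{supp}(D_2)$. Write $D_1' = D_1 + \operatorname{div}(h)$ for some rational function $h$ on $X$. Then $\operatorname{supp}(\operatorname{div}(h)) \subseteq \operatorname{supp}(D_1) \cup \operatorname{supp}(D_1')$ is disjoint from $\operatorname{supp}(\operatorname{div}(f)) = \operatorname{supp}(D_2)$, so Weil reciprocity applies to the pair $(f,h)$ and yields
\begin{equation*}
 f(\operatorname{div}(h)) = h(\operatorname{div}(f)) = h(mD_2) = h(D_2)^m.
\end{equation*}
Consequently $f(-D_1') = f(-D_1)\cdot h(D_2)^{-m}$, and the two expressions coincide in $k^\times/(k^\times)^m$. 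Combining both steps gives the theorem: for an arbitrary $D_1$ as in the statement, compare it with a special representative of the form $\sum(Q_i-P)$ and invoke \Cref{eq:bmjac}.

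The main obstacle is exactly the well-definedness under change of representative, since the defining formula \Cref{eq:bmjac} looks sensitive to the specific divisor chosen; the role of Weil reciprocity is to convert the ambiguity $f(\operatorname{div}(h))^{\pm 1}$ into an $m$th power via the identity $f(\operatorname{div}(h)) = h(\operatorname{div}(f))$, which uses crucially that $\operatorname{div}(f) = mD_2$ is $m$ times an effective piece. Everything else is bookkeeping.
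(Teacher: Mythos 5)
Your proposal is correct and follows essentially the same route as the paper: it reduces to the special representative handled by \Cref{eq:bmjac} and then uses Weil reciprocity to turn the ambiguity $f(\divv(h))=h(\divv(f))=h(D_2)^m$ into an $m$th power, establishing independence of the choice of $D_1$. Your additional remarks on Galois invariance and on the existence of a representative avoiding $\operatorname{supp}(D_2)$ are sensible bookkeeping that the paper leaves implicit.
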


\begin{proof}
 By \Cref{eq:bmjac} it suffices to show that the class of $f(D_1)$ in $k^{\times}/ (k^\times)^m$ does not depend on $D_1$ but only on its class modulo linear equivalence. Thus let $g$ a nonzero rational function on $X$ and consider
 \begin{eqnarray*}
  f(D_1+\divv(g))&=& f(D_1)\cdot f(\divv(g))\\
  &=& f(D_1)\cdot g(\divv(f))\\
  &=& f(D_1)\cdot g(D_2)^m
 \end{eqnarray*}
 where the second equation follows from Weil reciprocity.
\end{proof}

\begin{ex}
 Consider the hyperelliptic curve $X$ defined over $\Q$ by
 \begin{equation*}
  y^2=\prod_{i=0}^5(x-i).
 \end{equation*}
 Let $J$ its Jacobian and $\lambda\colon J\to J^\vee$ the principal polarization defined by a theta divisor.
 For $i=0,\ldots,5$ let $P_i=(i,0)$. The 15 nontrivial $2$-torsion points of $J$ are $a_{ij}=P_j-P_i$ for $0\leq i<j\leq 5$. 
 On these points the Weil pairing can be easily described as 
 \begin{equation*}
  e_2(a_{ij},\lambda(a_{kl}))=(-1)^{|\{i,j\}\cap\{k,l\}|}.
 \end{equation*}
Using \Cref{thm:curvepairing} one further computes that $q_2(a_{01})=5$, $q_2(a_{02})=-10$, $q_2(a_{03})=10$, $q_2(a_{04})=-5$ and $q_2(a_{05})=1$. Finally, for $0<i<j\leq5$ we have
\begin{equation*}
 q_2(a_{ij})=q_2(a_{0i}+a_{0j})=-q_2(a_{0i})q_2(a_{0j})
\end{equation*}
by \Cref{thm:q2prop}.
\end{ex}

\section{Real abelian varieties}
\subsection{Preliminaries}
We now turn to abelian varieties over $\R$. We first recall some basic properties of real abelian varieties from \cite{grossharris}.
Let $A$ be an abelian variety of dimension $g$ over $\R$. By abuse of notation, we denote the abelian group $A(\C)$ also by $A$. We denote by $A^0(\R)$ the connected component of $A(\R)$ which contains the neutral element. We further denote $A_2^0(\R)=A_2(\R)\cap A^0(\R)$.
The group $A^0(\R)$ is isomorphic to $(\R/\Z)^g$. We let $\lambda:A\to A^\vee$ a principal polarization. 
The natural map
\begin{equation}\label{eq:twoways}
 \mu_2(\C)=\{\pm1\}=\mu_2(\R)\to \R^{\times}/ (\R^\times)^{2}
\end{equation}
is an isomorphism and we identify both sides with the \emph{additive} group $\Z_2=\Z/2\Z$. The Weil pairing defines a symplectic bilinear form
\begin{equation*}
 \langle a_1,a_2\rangle:=e_2(a_1,\lambda(a_2))\in\Z_2
\end{equation*} on $A_2$
 for which $A_2^0(\R)$ is a maximal isotropic subspace. Thus we can extend a basis $v_1,\ldots,v_g$ of $A^0_2(\R)$  by some $w_1,\ldots,w_g$ to a symplectic basis of $A_2$. This choice of basis identifies $A_2$ with $\Z_2^{2g}$ in such a way that writing $a=\binom{a_u}{a_l}$ with $a_u,a_l\in\Z_2^g$ for $a\in\Z_2^{2g}$, we have $\langle a,c\rangle=a_u^tc_l-a_l^tc_u$ for $a,c\in\Z_2^{2g}$. 
\subsection{A signed count of $2$-torsion points} 
With this preparation, we are ready to prove a signed count of the real $2$-torsion points on a principally polarized real abelian variety.
 \begin{lem}\label{lem:lagr}
  The bilinear form 
  \begin{equation*}
    A_2(\R)\times A_2(\R)\to\Z_2,\, (a_1,a_2)\mapsto b_2(a_1,\lambda(a_2))
  \end{equation*}
   is trivial on $A^0_2(\R)\times A_2(\R)$.
 \end{lem}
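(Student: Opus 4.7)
The plan is to use the defining formula $b_2(a_1, \lambda(a_2)) = f(a_1)/f(0) \in \R^\times/(\R^\times)^2$, where $f$ is a rational function on $A$ with $\divv(f) = 2D$ for a divisor $D$ representing $\lambda(a_2)$ whose support avoids $\{0, a_1\}$. Under the identification \eqref{eq:twoways} of $\R^\times/(\R^\times)^2$ with $\Z_2$, the lemma amounts to showing that $f(0)$ and $f(a_1)$ have the same sign whenever $a_1 \in A_2^0(\R)$.

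The key observation is that a rational function whose divisor is twice a divisor admits a well-defined, locally constant sign on the real locus of a smooth variety. Since $A$ is smooth, each local ring $\cO_{A,P}$ is a regular local ring and hence a UFD, so the condition $\divv(f) = 2D$ yields a local factorization $f = u \cdot h^2$ on a neighborhood $U$ of any closed point $P \in A(\R)$, with $u \in \cO_{A,P}^\times$ a local unit and $\divv(h) = D$ on $U$. I would then set $\sigma(P) := \mathrm{sgn}(u(P)) \in \{\pm1\}$. This is independent of the chosen factorization: two factorizations $f = uh^2 = u'h'^2$ give $u/u' = (h'/h)^2$, and since $u,u'$ are local units so is $h'/h$, so $((h'/h)(P))^2 > 0$ forces $u(P)$ and $u'(P)$ to have the same sign.

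I would then check that $\sigma$ is locally constant on $A(\R)$. In the neighborhood $U$ above, for $Q \in U \setminus \mathrm{supp}(D)$ one has $f(Q) = u(Q) h(Q)^2$ with $h(Q) \in \R^\times$, so $\mathrm{sgn}(f(Q)) = \mathrm{sgn}(u(Q))$; for $Q \in U \cap \mathrm{supp}(D)$ the same factorization gives $\sigma(Q) = \mathrm{sgn}(u(Q))$ directly by definition. Continuity and nonvanishing of the unit $u$ then make $\sigma$ constant in a sufficiently small neighborhood of $P$, and hence constant on each connected component of $A(\R)$, in particular on $A^0(\R)$. Since $0$ and $a_1$ both lie in $A^0(\R)$ and neither is in $\mathrm{supp}(D)$, we deduce $\mathrm{sgn}(f(0)) = \sigma(0) = \sigma(a_1) = \mathrm{sgn}(f(a_1))$, so $f(a_1)/f(0) > 0$ is a square in $\R^\times$. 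No substantial obstacle is expected; the main technical point is the UFD-based local factorization and the well-definedness of $\sigma$, both of which are routine consequences of smoothness.
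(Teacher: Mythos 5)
Your proof is correct and follows the same route as the paper: the paper's argument is exactly that $f$ has constant sign on the connected component $A^0(\R)$ because $\divv(f)=2D$, and that this sign is positive at $0$. You merely supply the local justification (the factorization $f=uh^2$ in the regular, hence UFD, local rings) for the constant-sign claim that the paper states without proof.
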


\begin{proof}
 Let $\lambda(a_2)$ be represented by a divisor $D$ on $A$ and $f$ a rational function on $A$ such that $\divv(f)=2D$ and $f(0)=1$. The rational function $f$ has constant sign on $A^0(\R)$ since $\divv(f)=2D$ and this sign is positive since $f(0)=1$.
\end{proof}

Recall the definition $q_2(a)=b_2(a,\lambda(a))$ for $a\in A_2(\R)$. We regard $q_2(a)$ as an element of $\{\pm1\}$ via \Cref{eq:twoways}. The sum over all $q_2(a)$ only depends on $g$:

\begin{thm}\label{thm:signed2tor}
 For every principally polarized abelian variety $A$ of dimension $g$ over $\R$ we have 
 \begin{equation*}
  \sum_{a\in A_2(\R)} q_2(a)=2^g.
 \end{equation*}
\end{thm}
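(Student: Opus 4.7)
My plan is to combine Proposition \ref{thm:q2prop} and Lemma \ref{lem:lagr} to place $q_2$ in the classical setting of an $\F_2$-quadratic form vanishing on a Lagrangian subspace, and then invoke the standard count for hyperbolic quadratic forms.

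Writing additively via \Cref{eq:twoways}, Proposition \ref{thm:q2prop} says exactly that $q_2$ is a quadratic form on $V_\R := A_2(\R)$ whose polar form is the restriction $B := \langle\cdot,\cdot\rangle|_{V_\R \times V_\R}$, and Lemma \ref{lem:lagr} applied with $a_1=a_2=a$ gives $q_2|_{V_0} \equiv 0$, where $V_0 := A_2^0(\R)$. Since $V_0$ is Lagrangian in the ambient $V := A_2$, any $v \in V_\R$ orthogonal to all of $V_\R$ is a fortiori orthogonal to $V_0$, hence lies in $V_0^\perp = V_0$. Thus the radical $R$ of $B$ is contained in $V_0$; in particular $q_2$ vanishes on $R$, so $q_2$ descends to a quadratic form $\bar q_2$ on $V_\R/R$ with non-degenerate polar form $\bar B$. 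The same computation shows $(V_0/R)^{\perp_{\bar B}} = V_0/R$, so $V_0/R$ is Lagrangian in $(V_\R/R, \bar B)$; in particular $\dim V_\R/R = 2\dim V_0/R$.

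Since $\bar q_2$ vanishes on the Lagrangian $V_0/R$, the quadratic form $\bar q_2$ is hyperbolic, so the standard identity gives $\sum_{\bar a \in V_\R/R} (-1)^{\bar q_2(\bar a)} = 2^{\dim V_0/R}$. Multiplying by $|R|=2^{\dim R}$ yields
\begin{equation*}
 \sum_{a \in V_\R} q_2(a) = 2^{\dim R}\cdot 2^{\dim V_0/R} = 2^{\dim V_0} = 2^g,
\end{equation*}
as desired.

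The substantive step is recognizing, through Proposition \ref{thm:q2prop} and Lemma \ref{lem:lagr}, that $q_2$ fits the standard Arf-theoretic pattern of a quadratic form vanishing on a Lagrangian; after that everything is formal, and no further analysis of the real structure of $A$ (for instance of the component group of $A(\R)$) is needed.
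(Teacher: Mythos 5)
Your proof is correct, and it uses exactly the same two ingredients as the paper --- \Cref{thm:q2prop} (so that $q_2$ is an $\F_2$-quadratic form whose polar form is the restricted Weil pairing) and \Cref{lem:lagr} (so that $q_2$ vanishes on the Lagrangian $A_2^0(\R)$) --- but it finishes the count by a different route. The paper chooses a symplectic basis of $A_2$ extending a basis of $A_2^0(\R)$, splits $A_2(\R)\setminus A_2^0(\R)$ into the slices $V_c=\{a : a_l=c\}$, and exhibits on each slice an explicit sign-reversing involution $a\mapsto a+d$ with $d\in A_2^0(\R)$; the $2^g$ then comes entirely from the contribution of $A_2^0(\R)$. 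You instead quotient $A_2(\R)$ by the radical of the restricted pairing (correctly observing that this radical sits inside $A_2^0(\R)$ because the latter is Lagrangian in all of $A_2$, and that $q_2$ therefore descends), recognize the quotient form as hyperbolic since it vanishes on a Lagrangian, and invoke the standard character-sum identity for hyperbolic $\F_2$-quadratic forms. What your version buys is a cleaner, basis-free treatment of the possible degeneracy of the pairing on $A_2(\R)$ (which the paper handles implicitly through the coordinates $a_u,a_l$), at the cost of citing the Arf-invariant classification; the paper's pairing argument is in effect a self-contained proof of that same character-sum identity. One cosmetic remark: after switching to additive notation you should write the final sum as $\sum_a(-1)^{q_2(a)}$, as you do for $\bar q_2$, to match the multiplicative convention of the statement.
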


\begin{proof}
 By \Cref{lem:lagr} the sum of all $q_2(a)$ with $a\in A^0_2(\R)$ equals to $2^g$.
 For any $0\neq c\in \Z_2^g$ denote $$V_c=\{a\in A_2\mid a_l=c\}.$$ The set $A_2(\R)\setminus A^0_2(\R)$ is contained in the disjoint union of all $V_c$ for $0\neq c\in \Z_2^g$. Fix some $0\neq c\in \Z_2^g$ and let $b\in\Z_2^g$ a unit vector such that $c^tb=1$. Let $0 \neq d\in A^0_2(\R)$ with $d_u=b$ (and $d_l=0$). 
 The $2$-element subgroup generated by $d$ acts on $V_c$. Each orbit $\{a+md\mid m\in\Z_2\}$, $a\in V_c$, has length $2$. Since $d\in A(\R)$, either the entire orbit is contained in $A(\R)$ or it is disjoint from $A(\R)$. In the former case we have
\begin{equation*} 
 q_2(a+ d)=q_2(a)\cdot q_2(d)\cdot (-1)^{\langle a,d\rangle}=-q_2(a)
\end{equation*}
where the first equality follows from \Cref{thm:q2prop} and the second by
\begin{equation*}
 \langle a,d\rangle=a_u^td_l-a_l^td_u=c^tb=1\in\Z_2.
\end{equation*}
Thus each orbit consists of one element $a'$ with $q_2(a')=1$ and one element $a''$ with $q_2(a'')=-1$. This proves the claim.
\end{proof}

\begin{rem}
 \Cref{thm:signed2tor} remains valid when replacing $\R$ by an arbitrary real closed field $R$. The analogues over $R$ for the properties of real abelian varieties that we used in the proof can be found in the appendix of \cite{clausabel}.
\end{rem}

\begin{ex}\label{ex:el1}
 Consider the elliptic curve $E$ defined by
 \begin{equation*}
  y^2 =\frac{1}{3} (x + 3)(x^2 + 1).
 \end{equation*}
 It is depicted in \Cref{fig:elliptic} on the left.
 Besides the neutral element $O$ at infinity, this curve has one more real $2$-torsion point $Q=(-3,0)$. We consider the standard principal polarization that sends a point $P$ to the divisor class of $P-O$. We clearly have $q_2(O)=+1$. Letting $f=\frac{x^2+1}{x^2}$ we have that $\frac{1}{2}\divv(f)$ is linearly equivalent to $Q-O$. Thus we can compute $q_2(Q)$ as the sign of
 \begin{equation*}
  \frac{f(Q)}{f(O)}=\frac{10}{9}.
 \end{equation*}
 This is consistent with \Cref{thm:signed2tor} as
 \begin{equation*}
  \sum_{a\in E(\R)_2} q_2(a)=q_2(O)+q_2(Q)=1+1=2=2^g.
 \end{equation*}
\end{ex}

\begin{ex}\label{ex:el2}
 Consider the elliptic curve $E$ defined by
 \begin{equation*}
  y^2 = \frac{1}{3} x(x - 1)(x + 3).
 \end{equation*}
 It is depicted in \Cref{fig:elliptic} on the right.
 Besides the neutral element $O$ at infinity, this curve has three more real $2$-torsion points $Q_1=(-3,0)$, $Q_2=(0,0)$ and $Q_3=(1,0)$. We consider the standard principal polarization that sends a point $P$ to the divisor class of $P-O$. We clearly have $q_2(O)=+1$. Letting $f=\frac{x-1}{x}$ we have that $\frac{1}{2}\divv(f)$ is linearly equivalent to $Q_1-O$. Thus we can compute $q_2(Q_1)$ as the sign of
 \begin{equation*}
  \frac{f(Q_1)}{f(O)}=\frac{4}{3}.
 \end{equation*}
 Similarly, one computes that $q_2(Q_2)=-1$ and $q_2(Q_3)=+1$.
 This is consistent with \Cref{thm:signed2tor} as
 \begin{equation*}
  \sum_{a\in E(\R)_2} q_2(a)=q_2(O)+q_2(Q_1)+q_2(Q_2)+q_2(Q_3)=1+1-1+1=2=2^g.
 \end{equation*}
\end{ex}

\begin{figure}[ht]
 \includegraphics[width=5cm]{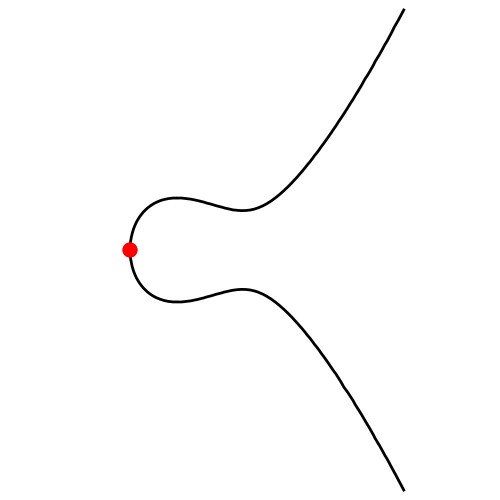} \quad
 \includegraphics[width=5cm]{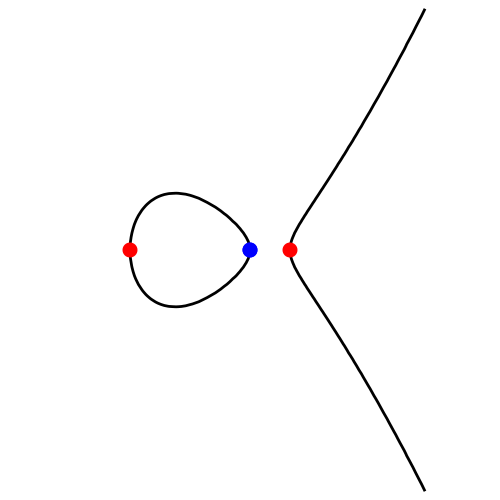}
\caption{The two elliptic curves from \Cref{ex:el1} and \Cref{ex:el2} and their non-trivial torsion points. $2$-torsion points $a$ with $q_2(a)=+1$ are marked red while $2$-torsion points $a$ with $q_2(a)=-1$ are marked blue.}
\label{fig:elliptic}
\end{figure}

\begin{ex}\label{ex:kum}
 Let $J$ be the Jacobian variety of a smooth real curve of genus two together with the principal polarization defined by the theta divisor.
 Its \emph{Kummer surface} is the quotient of $J$ by the involution $a\mapsto -a$.
 It can be embedded as a nodal surface $X$ of degree four in $\pp^3$. Its 16 singularities correspond to the $2$-torsion points of $J$ as these are the fixed points of the involution. Given a real nontrivial $2$-torsion point $a$ of $J$ we can read off $q_2(a)$ from the geometry of the Kummer surface $X$. Namely, there are exactly two real hyperplanes $H_1$ and $H_2$ that each contain six nodes of $X$, two of which are the ones corresponding to $0$ and $a$. Then $q_2(a)$ is $+1$ if and only if each connected component of $\R\pp^3\setminus(H_1\cup H_2)$ contains an even number of real singularities of $X$. \Cref{fig:kummer} displays a Kummer surface all of whose 16 nodes are real. Ten nodes correspond to a positive $2$-torsion point and the remaining six to a negative one. Since $10-6=4=2^g$, this is consistent with \Cref{thm:signed2tor}.
\end{ex}

\begin{figure}[ht]
 \includegraphics[width=10cm]{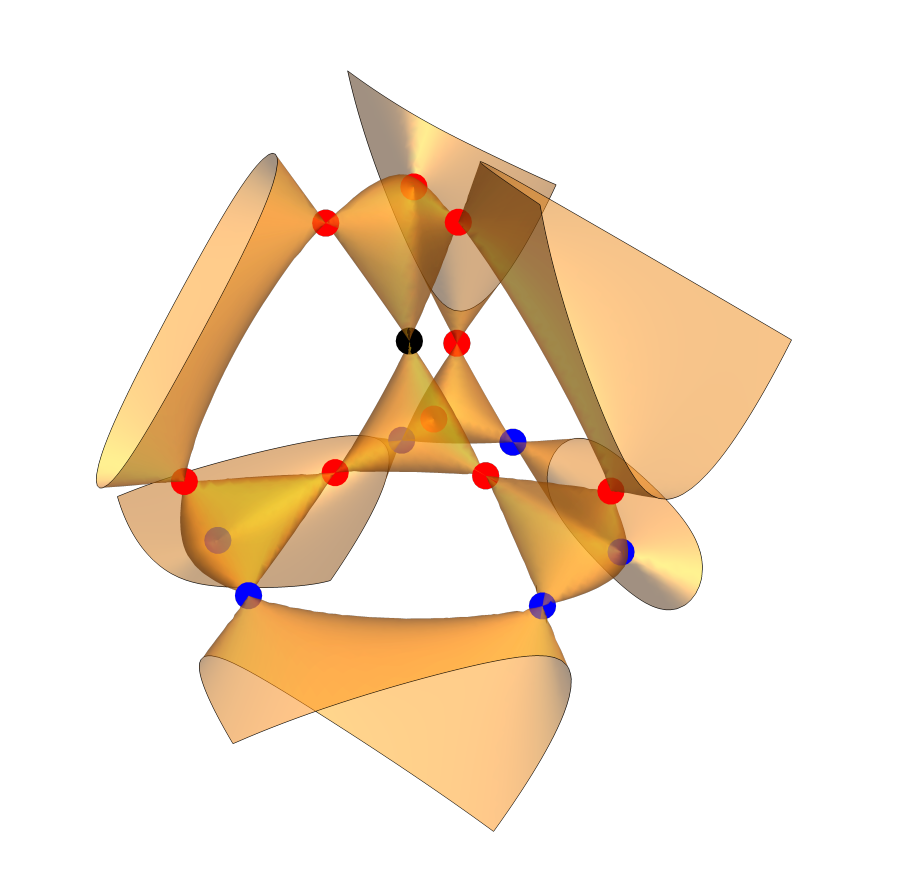}
\caption{The Kummer surface of a real curve of genus two whose real part has three connected components. All of its 16 singularities are real. Nodes that correspond to a nontrivial $2$-torsion point $a$ with $q_2(a)=+1$ are marked red while nodes that correspond to a nontrivial $2$-torsion point $a$ with $q_2(a)=-1$ are marked blue. The node which corresponds to the neutral element is marked black.}
\label{fig:kummer}
\end{figure}

\subsection{A geometric interpretation of $b_2$}
 We want to give a more topological interpretation of the pairing $b_2$.
The results from this subsection are not needed for the further progress of the paper.

Let $a\in A_2(\R)$ and $a'\in A_2^\vee(\R)$.
Let $L$ the line bundle on $A$ corresponding to $a'$ and fix an isomorphism $\psi\colon L\otimes L\to\cO_A$. One can associate an unramified double cover $\varphi\colon\tilde{A}\to A$ with $\psi$ as follows. The multiplication defined on $\cO_A\oplus L$ by
\begin{equation}\label{eq:cover}
 (a,s)\cdot(b,t):=(a\cdot b+\psi(s\otimes t),a\cdot t+b\cdot s)
\end{equation}
for sections $a,b$ of $\cO_A$ and sections $s,t$ of $L$ equips $\cO_A\oplus L$ with the structure of an $\cO_A$-algebra. Then $\tilde{A}$ is given by $\Spec(\cO_A\oplus L)$. The preimage of a real point $p\in A(\R)$ under $f$ either consists of two real points or a pair of non-real complex conjugate points.

\begin{prop}
 We have $b_2(a,a')=0\in\Z_2$ if and only if $\varphi^{-1}(a)$ has the same number of real points as $\varphi^{-1}(0)$.
\end{prop}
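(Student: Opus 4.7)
The plan is to describe the double cover $\varphi\colon\tilde{A}\to A$ explicitly in a local trivialization of $L$ and to read off the number of real points in each fiber from the sign of a defining equation. First, I would pick a divisor $D$ on $A$ representing $a'$ whose support avoids both $0$ and $a$, so that $L\cong\cO_A(D)$; this is exactly the kind of divisor used in the definition of $b_2(a,a')$. Let $s_D$ denote the tautological rational section of $L$ with $\divv(s_D)=D$; it trivializes $L$ over the open set $U:=A\setminus\Supp(D)$. Because $\psi$ is an isomorphism of line bundles, $\psi(s_D\otimes s_D)$ is a rational function on $A$ whose divisor equals $\divv(s_D\otimes s_D)=2D$, and therefore has the form $c\cdot f$ for some constant $c\in\R^\times$ and some rational function $f$ with $\divv(f)=2D$, i.e.\ the very function appearing in the definition of $b_2$.

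Next, unpacking the multiplication rule \Cref{eq:cover}, the element $u:=(0,s_D)\in(\cO_A\oplus L)(U)$ satisfies $u^{2}=(cf,0)$, so $\varphi^{-1}(U)$ is identified with $\Spec\cO_U[u]/(u^{2}-cf)$. Hence for any real point $p\in U(\R)$, the fiber $\varphi^{-1}(p)$ consists of two real points precisely when $cf(p)>0$ and of a complex conjugate pair when $cf(p)<0$.

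Since $\{0,a\}\subset U(\R)$ by the choice of $D$, the two fibers $\varphi^{-1}(0)$ and $\varphi^{-1}(a)$ have the same number of real points iff $cf(0)$ and $cf(a)$ share a sign. The scalar $c$ cancels, leaving the condition $f(a)/f(0)>0$, which by definition is $b_2(a,a')=0\in\Z_2$ under the identification of \Cref{eq:twoways}. The only point requiring care, rather than a genuine obstacle, is verifying that $\psi(s_D\otimes s_D)$ really is a well-defined rational function with divisor $2D$ and that the resulting $c$ is a global constant; both are immediate from $\psi$ being a globally defined isomorphism of line bundles on $A$, so $c$ is the same real number at every base point and drops out of the comparison between the fibers over $0$ and $a$.
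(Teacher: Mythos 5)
Your proof is correct and follows essentially the same route as the paper's: trivialize $L$ away from the support of $D$, identify the double cover there with $\Spec\cO_U[u]/(u^2-(\text{unit}))$ where the unit is a rational function with divisor $2D$ up to a constant, and read off the number of real points in a fiber from the sign of that unit. The only cosmetic difference is that you trivialize by the tautological section $s_D$ (introducing a harmless constant $c$ that cancels in the ratio), whereas the paper trivializes by the section $1$ and normalizes $\psi$ so that $u^2=1/f$ on the nose.
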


\begin{proof} 
 Let $D$ a divisor corresponding to $L$ such that $0$ and $a$ are not in the support of $D$.
 For every nonempty open subset $U$ of $A$ we can identify $L(U)$ with the set of rational functions $g$ on $A$ for which the restriction of $\divv(g)+D$ to $U$ is effective. Then there is a rational function $f$ on $A$ with $\divv(f)=2D$ such that the isomorphism $\psi$ is given by $\psi(g_1\otimes g_2)=\frac{g_1g_2}{f}$. Let $p\in A(\R)$ not in the support of $D$. Then, for a sufficiently small open affine neighbourhood $U$ of $p$, the $\cO_A(U)$-module $L(U)$ is generated by the element $1$. Thus by \Cref{eq:cover} the $\cO_A(U)$-algebra $\cO_{\tilde{A}}(\varphi^{-1}(U))$ is generated by $(0,1)$ and we have
 \begin{equation*}
  (0,1)\cdot(0,1)=(\frac{1}{f},0).
 \end{equation*}
Therefore, we have that
\begin{eqnarray*}
 \cO_{\tilde{A}}(\varphi^{-1}(U))\cong\cO_A(U)\left[{\sqrt{\frac{1}{f}}}\right]
\end{eqnarray*}
as $\cO_A(U)$-algebras. In particular, the point $p$ has a real preimage under $\varphi$ if and only if $f(p)>0$. Thus $\varphi^{-1}(a)$ has the same number of real points as $\varphi^{-1}(0)$ if and only if $\frac{f(p)}{f(0)}>0$ which implies the claim by the definition of $b_2$.
\end{proof}
We will see another interpretation of $b_2$ in \Cref{sec:orienta1}.

\section{Jacobians of real curves}\label{sec:realjac}
Now let $X$ be a smooth projective geometrically irreducible curve of genus $g$ over $\R$ with $X(\R)\neq\emptyset$ and let $J$ be its Jacobian which comes with a principal polarization $\lambda:J\to J^\vee$ given by the theta divisor. Let $X_0,\ldots,X_s$ the  connected components of $X(\R)$. Then $J_2(\R)\cong\Z_2^{g+s}$ by \cite[Proposition~3.2]{grossharris}. Further let $\Gal(\C/\R)=\{1,\sigma\}$. Finally, we denote $a(X)=1$ if $X(\C)\setminus X(\R)$ is connected and $a(X)=0$ otherwise.

Let $c\in J_2(\R)$ represented by a divisor $D$ on $X$. Then there is a rational function $f$ on $X$ such that $\divv(f)=2D$ which is nonnegative on $X_0$. Such $f$ has constant sign on every $X_i$ and we let $\sg_i(c)\in\Z_2$ such that this sign equals $(-1)^{\sg_i(c)}$.
This defines a group homomorphism 
\begin{equation*}
 \sg\colon J_2(\R)\to \Z_2^s,\, c\mapsto(\sg_1(c),\cdots,\sg_s(c)).
\end{equation*}
Note that $\sg_0(c)=0$ for every $c\in J_2(\R)$.
Now let $c\in\Pic(X)$, represented by a divisor $D=\sum_{P\in X}n_p\cdot P$ on $X$. It follows from \cite[Lemma~4.1]{grossharris} that for each $i$ the residue class
\begin{equation*}
 \Par_i(c):=\sum_{P\in X_i}\overline{n_P}\in\Z_2
\end{equation*}
does not depend on the representing divisor $D$.
We write
\begin{equation*}
 \Par(c):=(\Par_1(c),\ldots,\Par_s(c))\in\Z_2^s.
\end{equation*}
Note that $\Par_0(c)$ is determined by $\Par(c)$ and the degree of $c$. For instance if $c\in J(\R)=\Pic^0(X)$, then we have $\Par_0(c)=\sum_{i=1}^s \Par_i(c)$. The kernel of the group homomorphism $\Par\colon J_2(\R)\to\Z_2^s$ is $J^0_2(\R)$ \cite[proof of Proposition~3.2]{grossharris}.
Combining these two maps we obtain the homomorphism
\begin{equation*}
(\sg,\Par): J_2(\R)\to \Z_2^s\oplus\Z_2^s.
\end{equation*}

\begin{lem}\label{lem:homsurj}
 The homomorphism $(\sg,\Par)$ is surjective. Its kernel has dimension $g-s$ as $\Z_2$-vector space.
\end{lem}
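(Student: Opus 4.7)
The plan is to reduce both claims to the surjectivity of $\sg|_{J^0_2(\R)}\colon J^0_2(\R)\to\Z_2^s$, and then derive that surjectivity from the identities relating $b_2$ to the Weil pairing that have already been established. From \cite[Proposition~3.2]{grossharris} we already have $\dim_{\Z_2}J_2(\R)=g+s$ and $\ker\Par=J^0_2(\R)$ of dimension $g$; in particular $\Par$ is surjective. A short diagram chase then shows $(\sg,\Par)$ is surjective if and only if $\sg|_{J^0_2(\R)}$ is, and the kernel must then have dimension $(g+s)-2s=g-s$ by rank-nullity.

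For the remaining surjectivity I would pick, for each $i=1,\dots,s$, a lift $a_i\in J_2(\R)$ of the $i$-th standard basis vector of $\Z_2^s$ under $\Par$; such $a_i$ exist because $\Par$ is surjective. Representing $a_i$ by a real divisor $D_i$ and $c\in J^0_2(\R)$ by a real divisor $D_c$ with $\divv(f)=2D_c$ (disjoint from $D_i$) normalized so that $f>0$ on $X_0$, \Cref{thm:curvepairing} gives $b_2(a_i,\lambda(c))=f(D_i)\in\Z_2$. The non-real points of $D_i$ come in complex conjugate pairs and contribute squares to the product $f(D_i)$, while each real point $P\in X_j$ appearing in $D_i$ contributes a factor with sign $(-1)^{\sg_j(c)}$; using $\Par_j(a_i)=\delta_{ij}$ for $j=1,\dots,s$ and $\sg_0(c)=0$, a direct count yields
\begin{equation*}
 b_2(a_i,\lambda(c))=\sum_{j=1}^s\sg_j(c)\cdot\Par_j(a_i)=\sg_i(c).
\end{equation*}

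Combining \Cref{cor:bande} with \Cref{lem:lagr}, which yields $b_2(c,\lambda(a_i))=0$ for $c\in J^0_2(\R)$, we obtain $\sg_i(c)=b_2(a_i,\lambda(c))=e_2(a_i,\lambda(c))$ on $J^0_2(\R)$. Since $J^0_2(\R)$ is its own Weil annihilator in $J_2(\C)$, the Weil pairing induces a perfect duality $J^0_2(\R)\times(J_2(\C)/J^0_2(\R))\to\Z_2$; the classes $a_1,\dots,a_s$ are linearly independent modulo $J^0_2(\R)$ in $J_2(\R)\subseteq J_2(\C)$ by construction, so the $s$ functionals $c\mapsto e_2(a_i,\lambda(c))$ are linearly independent on $J^0_2(\R)$, yielding the desired surjectivity. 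The main obstacle is the sign computation identifying $b_2(a_i,\lambda(c))$ with $\sg_i(c)$; it requires carefully unfolding the divisor-theoretic formula for $b_2$ in a way that respects the normalization convention $\sg_0(c)=0$.
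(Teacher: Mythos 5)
Your proof is correct, but it takes a genuinely different route from the paper's. The paper disposes of the lemma in one line: surjectivity of $(\sg,\Par)$ is quoted from Geyer--Martens \cite[\S5]{geyermartens}, and the kernel dimension then follows from $J_2(\R)\cong\Z_2^{g+s}$. You instead give a self-contained argument inside the paper's own framework: after the (correct) reduction to surjectivity of $\sg|_{J_2^0(\R)}$ via surjectivity of $\Par$ and rank--nullity, you re-derive the special case $b_2(a_i,\lambda(c))=\sg_i(c)$ of what the paper later records as \Cref{prop:pairingint} (note \Cref{thm:curvepairing} gives $f(-D_i)$ rather than $f(D_i)$, but these agree modulo squares, so this is harmless), then use \Cref{cor:bande} and \Cref{lem:lagr} to convert $b_2(a_i,\lambda(-))$ into the Weil form $\langle a_i,-\rangle$ on $J_2^0(\R)$, and conclude from the fact that $J_2^0(\R)$ is Lagrangian and that $a_1,\ldots,a_s$ are independent modulo $J_2^0(\R)=\ker\Par$. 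I checked the steps: the non-real points of $D_i$ contribute squares, the normalization $\sg_0=0$ kills the $j=0$ term, and the duality argument is sound since a maximal isotropic subspace equals its own annihilator, so the $s$ independent functionals do give a surjection onto $\Z_2^s$. What your approach buys is independence from the external reference and a conceptual explanation of surjectivity as Lagrangian duality; what it costs is that it only covers the Jacobian-with-theta-polarization setting in which $b_2(-,\lambda(-))$ has been computed on the curve (which is all the paper needs here), and it quietly re-proves \Cref{prop:pairingint} before the paper states it --- not a logical problem, since that proposition does not depend on \Cref{lem:homsurj}, but worth flagging to avoid apparent circularity.
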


\begin{proof}
 Surjectivity was shown in \cite[\S5]{geyermartens}. The statement on the kernel then follows from $J_2(\R)\cong\Z_2^{g+s}$.
\end{proof}

Our pairing $b_2$ can also be expressed in terms of $\Par$ and $\sg$.

\begin{prop}\label{prop:pairingint}
 For every $a_1,a_2\in J_2(\R)$ we have
 \begin{equation*}
  b_2(a_1,\lambda(a_2))=\sum_{i=1}^s\Par_i(a_1)\cdot\sg_i(a_2)\in\Z_2.
 \end{equation*}
\end{prop}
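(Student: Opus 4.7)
The strategy is to apply \Cref{thm:curvepairing} to obtain an explicit formula for $b_2(a_1,\lambda(a_2))$ in terms of a rational function, and then to identify the resulting sign with the data recorded by $\sg$ and $\Par$.

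First, since $X(\R)\neq\emptyset$, each class $a_i\in J_2(\R)$ is represented by a $\sigma$-invariant divisor of degree zero on $X$, and a standard moving argument produces such representatives $D_1,D_2$ with disjoint supports. Choose a rational function $f$ on $X$ with $\divv(f)=2D_2$. Because $2D_2$ is real, we have $\sigma(f)=\lambda f$ for some $\lambda\in\C^\times$ with $|\lambda|=1$; writing $\lambda=\mu/\overline{\mu}$ and replacing $f$ by $\mu f$ makes $f$ real. Multiplying by a positive scalar then yields the normalization used in the definition of $\sg$, namely $f\geq 0$ on $X_0$. Since $\deg D_1=0$, the value $f(D_1)\in\R^\times$ is insensitive to this positive rescaling. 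By \Cref{thm:curvepairing},
\begin{equation*}
b_2(a_1,\lambda(a_2))\equiv f(-D_1)\equiv f(D_1) \pmod{(\R^\times)^2},
\end{equation*}
so we are reduced to computing the sign of $f(D_1)=\prod_P f(P)^{n_P}$, where $D_1=\sum_P n_P\cdot P$.

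Next, I split the support of $D_1$ into real and non-real points. A non-real point $P$ appears in $D_1$ together with $\sigma(P)$ with equal multiplicities, and $f(P)\cdot f(\sigma(P))=f(P)\overline{f(P)}=|f(P)|^2>0$, so these contributions are positive and do not affect the sign. For each real point $P$ lying on some component $X_i$, the sign of $f(P)$ equals $(-1)^{\sg_i(a_2)}$ by the very definition of $\sg_i(a_2)$, which is where the normalization of $f$ is used; note that $f(P)\neq 0$ because the supports of $D_1$ and $D_2$ are disjoint. Grouping by component yields
\begin{equation*}
\mathrm{sgn}\bigl(f(D_1)\bigr)=\prod_{i=0}^s\bigl((-1)^{\sg_i(a_2)}\bigr)^{\sum_{P\in X_i}n_P}=(-1)^{\sum_{i=0}^s\sg_i(a_2)\cdot\Par_i(a_1)},
\end{equation*}
and since $\sg_0(a_2)=0$ the term $i=0$ vanishes, leaving exactly the claimed expression.

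The main obstacle is purely bookkeeping: one must verify that the choices of real representing divisors with disjoint supports, a real function $f$, and the sign normalization on $X_0$ can all be arranged simultaneously, and that the final answer is insensitive to the remaining ambiguity. None of these checks is deep, but each one is essential, since $b_2$ only takes values in $\R^\times/(\R^\times)^2$ and every step of the computation must remain well-defined modulo squares.
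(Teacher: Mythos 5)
Your proof is correct and follows essentially the same route as the paper: apply \Cref{thm:curvepairing}, discard the conjugate pairs of non-real points of $D_1$ as contributing squares, and read off the sign componentwise from the definitions of $\sg_i$ and $\Par_i$. The only difference is that you spell out the (correct) verification that $f$ can be taken real and normalized nonnegative on $X_0$, a point the paper's proof leaves implicit.
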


\begin{proof}
 Let $a_1,a_2$ represented by divisors $D_1,D_2$ on $X$ with disjoint supports. Let $f$ be a rational function on $X$ such that $\divv(f)=2D_2$. Then by \Cref{thm:curvepairing} we can compute $b_2(a_1,\lambda(a_2))$ as the square class of $f_2(-D_1)$. Let $D_1=\sum_{P\in X}n_p\cdot P$. Then $f_2(-D_1)$ has the same square class as
 \begin{equation*}
  \prod_{P\in X(\R)} f_2(P)^{n_p}=\prod_{i=0}^s\prod_{P\in X_i}f_2(P)^{n_p}.
 \end{equation*}
 The square class of $\prod_{P\in X_i}f_2(P)^{n_p}$ is nontrivial if and only if $\sum_{P\in X_i}n_p$ is odd and $f_2$ has negative sign on $X_i$. Since this is equivalent to $\Par_i(a_1)\cdot\sg_i(a_2)=1$, the claim follows.
\end{proof}

The Weil pairing defines a symplectic bilinear form
\begin{equation*}
 J_2\times J_2\to\Z_2,\, \langle a_1,a_2\rangle:=e_2(a_1,\lambda(a_2)).
\end{equation*}
In the following we will construct a specific symplectic basis of $J_2$. By \Cref{lem:homsurj} we can choose some elements $v_1,\ldots,v_s,w_1,\ldots,w_s\in J_2(\R)$ that are mapped by $(\sg,\Par)$ to the standard basis of $\Z_2^s\oplus\Z_2^s$  and a basis $v_{s+1},\ldots,v_{g}$ of $\ker(\sg,\Par)$.

\begin{lem}
 The basis $v_1,\ldots,v_g,w_1,\ldots,w_s$ of $J_2(\R)$ satisfies $\langle v_i,w_j\rangle=\delta_{ij}$, $\langle v_i,v_j\rangle=0$ and $\langle w_i,w_j\rangle=0$ for all $i=1,\ldots,g$ and $j=1,\ldots,s$.
\end{lem}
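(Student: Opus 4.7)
The plan is to express the Weil-based symplectic form $\langle\cdot,\cdot\rangle$ entirely in terms of the invariants $\sg$ and $\Par$, and then read off the three required identities from how the basis was chosen.

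First I would derive a closed formula for $\langle\cdot,\cdot\rangle$ valid on $J_2(\R)\times J_2(\R)$. Transporting \Cref{cor:bande} to additive notation on $\Z_2$ via \Cref{eq:twoways} (where inversion is the identity, so division becomes addition) gives
\begin{equation*}
 \langle a_1,a_2\rangle = b_2(a_1,\lambda(a_2)) + b_2(a_2,\lambda(a_1)) \in \Z_2,
\end{equation*}
and plugging in \Cref{prop:pairingint} yields the manifestly symmetric expression
\begin{equation*}
 \langle a_1,a_2\rangle = \sum_{i=1}^s \bigl(\Par_i(a_1)\sg_i(a_2) + \Par_i(a_2)\sg_i(a_1)\bigr).
\end{equation*}

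Next I would evaluate this formula on the chosen basis. By construction $(\sg,\Par)(v_i)=(e_i,0)$ for $i\le s$, $v_i\in\ker(\sg,\Par)$ for $i>s$, and $(\sg,\Par)(w_j)=(0,e_j)$ for $j=1,\ldots,s$. In particular $\Par(v_i)=0$ for every $i=1,\ldots,g$, while $\sg(w_j)=0$ for every $j=1,\ldots,s$. Substituting these vanishings into the formula immediately kills both summands, giving $\langle v_i,v_j\rangle=0$ and $\langle w_i,w_j\rangle=0$. For the mixed pairing only the term $\Par_k(w_j)\sg_k(v_i)=(e_j)_k\,(\sg(v_i))_k$ survives, and summing over $k$ yields the $j$-th coordinate of $\sg(v_i)$. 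This equals $\delta_{ij}$ for $i\le s$ since $\sg(v_i)=e_i$, and equals $0=\delta_{ij}$ for $i>s$ (because then $i\neq j$, using $j\le s$).

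The calculation is essentially routine once the closed formula for $\langle\cdot,\cdot\rangle$ in terms of $\sg$ and $\Par$ is in hand, so the main conceptual step is the first one. The only subtlety to watch is purely notational: one must translate \Cref{cor:bande} correctly through the multiplicative-to-additive identification \Cref{eq:twoways}, and keep track of the asymmetric roles of $\sg$ and $\Par$ on the two halves of the basis.
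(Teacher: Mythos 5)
Your proof is correct and takes essentially the same route as the paper: both combine \Cref{prop:pairingint} with the identity $\langle a_1,a_2\rangle=b_2(a_1,\lambda(a_2))+b_2(a_2,\lambda(a_1))$ coming from \Cref{cor:bande}, and then read the three identities off from $\Par(v_i)=0$ and $\sg(w_j)=0$. Your symmetric closed formula merely packages the paper's computation a bit more systematically (and, incidentally, places the $\delta_{ij}$ in the term $\Par_k(w_j)\sg_k(v_i)$, which is the assignment consistent with the conventions of \Cref{thm:basisj2}).
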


\begin{proof}
 By \Cref{prop:pairingint} we have $b_2(v_i,\lambda(w_j))=\delta_{ij}$ and $b_2(w_j,\lambda(v_i))=0$ for all $i,j$. \Cref{thm:pairing}(iii) thus implies that $\langle v_i,w_j\rangle=\delta_{ij}$. Similarly $b_2(v_i,\lambda(v_j))=0$ and $b_2(w_i,\lambda(w_j))=0$  implies $\langle v_i,v_j\rangle=0$ and $\langle w_i,w_j\rangle=0$ for all $i,j$.
\end{proof}

\begin{thm}\label{thm:basisj2}
 There is a symplectic basis $v_1,\ldots,v_g,w_1,\ldots,w_g$ of $J_2$ with the following properties.
 \begin{enumerate}[(i)]
  \item $v_1,\ldots,v_g,w_1,\ldots,w_s$ is a basis of $J_2(\R)$.
  \item $\sg_j(v_i)=\delta_{ij}$ and $\Par_j(v_i)=0$ for all $i=1,\ldots,g$ and $j=1,\ldots,s$.
  \item $\sg_j(w_i)=0$ and $\Par_j(w_i)=\delta_{ij}$ for all $i,j=1,\ldots,s$.
  \item The representing matrix of $\sigma\colon J_2\to J_2$ with respect to this basis is of the form
  \begin{equation*}
   \begin{pmatrix}
    I_g& H\\
    0 & I_g
   \end{pmatrix}
  \end{equation*}
  where $I_g$ is the $g\times g$ identity matrix and $H$ is a block matrix 
  \begin{equation*}
   H=\begin{pmatrix}
    0& 0\\
    0 & H'
   \end{pmatrix}
  \end{equation*}
  of rank $g-s$. Here $H'$ is the $(g-s)\times(g-s)$ identity matrix if $a(X)=1$ and $H'$ is a block diagonal matrix with blocks $\begin{pmatrix}0&1\\1&0\end{pmatrix}$ if $a(X)=0$.
 \end{enumerate}
\end{thm}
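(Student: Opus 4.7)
My plan is to extend the partial basis $v_1,\ldots,v_g$ and $w_1,\ldots,w_s$ (already fixed just before the statement of the theorem) to a full symplectic basis of $J_2$, and then analyse the matrix of complex conjugation $\sigma$ in it.

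First, I observe that properties (ii) and (iii) for the already-fixed elements hold by construction, and that $v_1,\ldots,v_g,w_1,\ldots,w_s$ form a $\Z_2$-basis of $J_2(\R)\cong\Z_2^{g+s}$: indeed, $(\sg,\Par)$ sends $v_1,\ldots,v_s,w_1,\ldots,w_s$ to a basis of $\Z_2^s\oplus\Z_2^s$, while $v_{s+1},\ldots,v_g$ is a basis of $\ker(\sg,\Par)$. The preceding lemma yields $\langle v_i,v_j\rangle=0$ for all $i,j$, so $L:=\mathrm{span}(v_1,\ldots,v_g)$ is a Lagrangian subspace of $(J_2,\langle\cdot,\cdot\rangle)$.

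Next, I would complete $w_1,\ldots,w_s$ to $w_1,\ldots,w_g$ via a symplectic Gram--Schmidt procedure. Non-degeneracy of the Weil pairing, together with $L$ being Lagrangian, provides lifts $\tilde w_{s+1},\ldots,\tilde w_g\in J_2$ of a basis of $J_2/L$ dual to $v_{s+1},\ldots,v_g$, so $\langle v_i,\tilde w_j\rangle=\delta_{ij}$. Correcting these lifts by suitable elements of $L$ produces $w_{s+1},\ldots,w_g$ with $\langle w_i,w_j\rangle=0$ for all $i,j\in\{1,\ldots,g\}$, without disturbing the $v$-pairings or properties (i)--(iii).

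For (iv): since $\sigma$ fixes $v_1,\ldots,v_g$ and $w_1,\ldots,w_s$ pointwise, the first $g$ columns of the matrix of $\sigma$ and the first $s$ of the remaining ones are as claimed. For $j>s$, $\sigma(w_j)-w_j$ lies in $J_2(\R)$ because $\sigma$ is an involution whose fixed subspace is $J_2(\R)$. Pairing with $v_i$ for $i\leq s$ and using Galois-invariance of the Weil pairing gives $\langle\sigma(w_j)-w_j,v_i\rangle=0$, so the $w_k$-components for $k\leq s$ of $\sigma(w_j)-w_j$ vanish, i.e.\ $\sigma(w_j)-w_j\in L$. This gives the upper-triangular block form with upper-right block $H$. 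The identity $\sigma^\top M\sigma=M$ (where $M$ is the matrix of the Weil pairing in our basis) forces $H$ to be symmetric; combined with the vanishing of its first $s$ columns, this gives the block shape with $H'$ in the lower-right corner. Finally, $\mathrm{rank}(H)=\mathrm{rank}(\sigma-\mathrm{id})=\dim_{\Z_2}J_2-\dim_{\Z_2}J_2(\R)=g-s$, so $H'$ is invertible.

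The main obstacle is showing that $H'$ takes the exact normal form dictated by $a(X)$. At this point $H'$ is only known to be an invertible symmetric $(g-s)\times(g-s)$ matrix over $\F_2$; such matrices form two orbits under congruence by $\mathrm{GL}_{g-s}(\F_2)$, distinguished by whether the associated quadratic form $x\mapsto x^\top H' x$ vanishes on some nonzero vector (the two types being the identity and the hyperbolic one, with the latter forcing $g-s$ even). I would finish by appealing to the classical topological description of the action of complex conjugation on $H_1(X(\C),\Z_2)\cong J_2$ from \cite{grossharris}, which identifies these two orbits with the two values $a(X)=1$ and $a(X)=0$ respectively.
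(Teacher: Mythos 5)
Your proof follows essentially the same route as the paper: complete the real partial basis $v_1,\ldots,v_g,w_1,\ldots,w_s$ to a symplectic basis, use that $\sigma$ preserves the Weil pairing and fixes $J_2(\R)$ pointwise to obtain the block shape of the matrix with a symmetric invertible $H'$ of rank $g-s$, and invoke Gross--Harris (Proposition~4.4 there) to normalize $H'$ according to $a(X)$. One small slip worth fixing: the two congruence classes of invertible symmetric matrices over $\F_2$ are distinguished by whether the linear functional $x\mapsto x^\top H'x$ vanishes \emph{identically} (equivalently, whether $H'$ has zero diagonal, i.e.\ the form is alternating), not by whether it vanishes on some nonzero vector --- the identity form $\sum x_i$ already vanishes on nonzero vectors as soon as $g-s\geq 2$.
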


\begin{proof}
By the preceding lemma we can complete $v_1,\ldots,v_g,w_1,\ldots,w_s$ to a symplectic basis $v_1,\ldots,v_g$, $w_1,\ldots,w_g$ of $J_2$. By construction this basis satisfies $(i)$---$(iii)$. Because $\sigma\colon J_2\to J_2$ preserves the symplectic form and fixes $v_1,\ldots,v_g,w_1,\ldots,w_s$, its representing matrix with respect to our chosen basis is of the form
  \begin{equation*}
   \begin{pmatrix}
    I_g& H\\
    0 & I_g
   \end{pmatrix}
  \end{equation*}
  where $I_g$ is the $g\times g$ identity matrix and $H$ is a block matrix 
  \begin{equation*}
   H=\begin{pmatrix}
    0& 0\\
    0 & H'
   \end{pmatrix}
  \end{equation*}
  of rank $g-s$ and $H'$ is symmetric. As in \cite[Proposition~4.4]{grossharris} we can bring $H'$ into the desired form by a change of coordinates on $w_{s+1},\ldots,w_g$ and the corresponding change of coordinates on $v_{s+1},\ldots,v_g$,
\end{proof}

\section{Real theta characteristics}\label{sec:theta}
Let $X$ be a smooth projective geometrically irreducible curve of genus $g$ over $\R$ with $X(\R)\neq\emptyset$ and let $J$ be its Jacobian. Again let $\Gal(\C/\R)=\{1,\sigma\}$. Further let $X_0,\ldots,X_s$ the  connected components of $X(\R)$. We fix a symplectic basis $v_1,\ldots,v_g,w_1,\ldots,w_g$ of $J_2$ as in \Cref{thm:basisj2}. Every $c\in J_2$ can be written as
\begin{equation*}
 c=\sum_{i=1}^g a_i v_i + \sum_{i=1}^g b_i w_i
\end{equation*}
and we define $c_u:=(a_1,\ldots,a_g)^t\in\Z_2^g$ and $c_l:=(b_1,\ldots,b_g)^t\in\Z_2^g$.

Recall that a \emph{quadratic form with polarity $\langle-,-\rangle$} (here $\langle a,b\rangle=e_2(a,\lambda(b))$ as in the previous section) on $J_2$ is a map $q\colon J_2\to\Z_2$ such that
\begin{equation*}
 q(a_1+a_2)=q(a_1)+q(a_2)+\langle a_1,a_2\rangle
\end{equation*}
for all $a_1,a_2\in J_2$. We denote by $Q(J_2)$ the set of all quadratic forms with polarity $\langle-,-\rangle$ on $J_2$.

\begin{Def}
 By a \emph{theta characteristic} on $X$ we mean a line bundle $L$ on $X_\C$ such that $L\otimes L$ is isomorphic to the canonical line bundle on $X$. A theta characteristic is called \emph{even} or \emph{odd} according to the parity of the dimension of its space of global sections. We say that $L$ is a \emph{real theta characteristic} if it arises as base change from a line bundle on $X$. This is equivalent to being fixed under $\sigma$.
\end{Def}
As a reference for the following statements on theta characteristics we recommend \cite[Chapter~5]{dolga}.
For every theta-characteristic $\nu\in\Pic(X_\C)$ the map
\begin{equation*}
 q_\nu\colon J_2\to\Z_2,\, a\mapsto h^0(a+\nu)+h^0(\nu)
\end{equation*}
is a quadratic form with polarity $\langle-,-\rangle$. Moreover, the map $\nu\mapsto q_\nu$ is a bijection. If $\nu$ is a theta characteristic and $a\in J_2$, then $a+\nu$ is a again a theta characteristic and we have for all $c\in J_2$:
\begin{equation}\label{eq:polar1}
 q_{a+\nu}(c)=q_\nu(c)+\langle a,c\rangle.
\end{equation}
A theta characteristic $\nu$ is even resp. odd if and only if $\arf(q_\nu)=0$ resp. $\arf(q_\nu)=1$. It is real if and only if $q_\nu=q_\nu\circ\sigma$.

A specific element of $Q(J_2)$ is the following
\begin{equation*}
 q_0\colon J_2\to\Z_2,\, c\mapsto c_u^t\cdot c_l.
\end{equation*}
We denote the theta characteristic corresponding to $q_0$ by $\eta_0$.
Note that by \Cref{prop:pairingint} and \Cref{thm:basisj2} we have $q_0(c)=q_2(c)$ for all $c\in J_2(\R)$. For every $a\in J_2$ we denote $q_a:=q_{a+\eta_0}$. Then \Cref{eq:polar1} translates to
\begin{equation}\label{eq:polar2}
 q_{a}(c)=q_0(c)+\langle a,c\rangle.
\end{equation}
The map $J_2\to Q(J_2),\, a\mapsto q_a$ is a bijection and one has $\arf(q_a)=q_0(a)$. In particular, the  theta characteristic $\eta_0$ is even.

\begin{lem}\label{lem:thetainv}
 We have
 \begin{equation*}
  q_0\circ\sigma=q_h
 \end{equation*}
 where $h=\sum_{i=s+1}^g v_i$ if $a(X)=1$ and $h=0$ if $a(X)=0$. 
\end{lem}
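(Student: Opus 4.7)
The plan is to reduce everything to an explicit matrix computation using the symplectic basis from \Cref{thm:basisj2}. Since the map $a\mapsto q_a$ is a bijection from $J_2$ onto $Q(J_2)$, and $q_0\circ\sigma$ clearly lies in $Q(J_2)$ (because $\sigma$ preserves the Weil pairing, hence the polarity $\langle-,-\rangle$), there exists a unique $h\in J_2$ with $q_0\circ\sigma=q_h$. By \Cref{eq:polar2}, determining $h$ amounts to computing the ``defect'' $q_0(\sigma(c))-q_0(c)$ as a $\Z_2$-linear function of $c$ and recognising it as $\langle h,c\rangle$ for the claimed $h$.

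Concretely, I would write a general $c\in J_2$ as $c=\binom{c_u}{c_l}$ in the basis of \Cref{thm:basisj2}, so that $q_0(c)=c_u^tc_l$. By part (iv) of that theorem, $\sigma(c)=\binom{c_u+Hc_l}{c_l}$, which gives
\begin{equation*}
q_0(\sigma(c))=(c_u+Hc_l)^tc_l=q_0(c)+c_l^tHc_l\in\Z_2.
\end{equation*}
Thus the whole problem reduces to identifying the quadratic function $c\mapsto c_l^tHc_l$ on $J_2$ with a linear functional $\langle h,c\rangle$ for the correct $h\in J_2$, where recall that $\langle h,c\rangle=h_u^tc_l+h_l^tc_u$ (over $\Z_2$).

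Now I would split into the two cases from \Cref{thm:basisj2}(iv). If $a(X)=0$, then $H'$ is a block-diagonal matrix of off-diagonal $2\times2$ blocks, so $c_l^tHc_l$ is a sum of terms of the form $2x_iy_i\equiv 0\pmod 2$; hence $q_0(\sigma(c))=q_0(c)$ for all $c$, which forces $h=0$ as claimed. If $a(X)=1$, then the lower-right block of $H$ is the identity $I_{g-s}$, so
\begin{equation*}
c_l^tHc_l=\sum_{i=s+1}^g(c_l)_i^2=\sum_{i=s+1}^g(c_l)_i\in\Z_2.
\end{equation*}
Setting $h=\sum_{i=s+1}^gv_i$, one has $h_u=(0,\dots,0,1,\dots,1)^t$ and $h_l=0$, so $\langle h,c\rangle=\sum_{i=s+1}^g(c_l)_i$, matching the defect exactly. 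Combining both cases with \Cref{eq:polar2} yields $q_0\circ\sigma=q_h$.

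The computation is essentially routine once the matrix of $\sigma$ is in hand, so there is no real obstacle; the only subtle point is being careful with the fact that over $\Z_2$ the quadratic form $c_l\mapsto c_l^tHc_l$ collapses to a linear functional precisely because the off-diagonal entries of $H'$ contribute in pairs and the diagonal entries satisfy $x^2=x$. This is exactly why $h$ comes out to either $0$ or the sum of the ``extra'' basis vectors $v_{s+1},\dots,v_g$ depending on the shape of $H'$ dictated by $a(X)$.
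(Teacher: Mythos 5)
Your proposal is correct and follows essentially the same route as the paper: conjugate $c$ by the matrix of $\sigma$ from \Cref{thm:basisj2}(iv), observe $q_0(\sigma(c))=q_0(c)+c_l^tHc_l$, and identify $c_l^tHc_l$ with $\langle h,c\rangle$ via \Cref{eq:polar2}. The only difference is that you spell out the case analysis on $H'$ (off-diagonal blocks cancelling mod $2$ when $a(X)=0$, and $x^2=x$ giving the linear functional when $a(X)=1$), which the paper compresses into the single sentence ``By our definition of $h$, we have $c_l^tHc_l=\langle h,c\rangle$.''
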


\begin{proof}
 For $c\in J_2$ we have that $\sigma(c)$ is given in coordinates
 \begin{equation*}
  \begin{pmatrix}
    I_g& H\\
    0 & I_g
   \end{pmatrix}
   \begin{pmatrix}
    c_u\\
    c_l 
   \end{pmatrix}=\begin{pmatrix}
    c_u+Hc_l\\
    c_l
   \end{pmatrix}
 \end{equation*}
 where $H$ is the matrix from \Cref{thm:basisj2}(iv).
 Therefore, we have 
 \begin{equation*}
  q_0(\sigma(c))=q_0(c)+c_l^tHc_l.
 \end{equation*}
  By our definition of $h$, we have $c_l^tHc_l=\langle h,c\rangle$. This implies by \Cref{eq:polar2}
  \begin{equation*}
   q_0(\sigma(c))=q_0(c)+\langle h,c\rangle=q_h(c).\qedhere
  \end{equation*}
\end{proof}

\begin{cor}\label{cor:conju}
 For every $a\in J_2$ we have $q_a\circ\sigma=q_{\sigma(a)+h}$ where $h$ is defined as in \Cref{lem:thetainv}.
\end{cor}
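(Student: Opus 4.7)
The plan is to unwind $q_a\circ\sigma$ using the polarization identity and reduce to \Cref{lem:thetainv}. Starting from \Cref{eq:polar2} with $c$ replaced by $\sigma(c)$, I would write
\begin{equation*}
 q_a(\sigma(c)) = q_0(\sigma(c)) + \langle a,\sigma(c)\rangle.
\end{equation*}
The first summand is handled by \Cref{lem:thetainv}, which gives $q_0(\sigma(c)) = q_h(c) = q_0(c)+\langle h,c\rangle$.

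The second summand requires that $\sigma$ acts compatibly on the Weil pairing. The pairing $e_2$ takes values in $\mu_2\subset\{\pm1\}$, which is fixed pointwise by $\sigma$, and the polarization $\lambda$ is defined over $\R$, so $\sigma\circ\lambda=\lambda\circ\sigma$. Combined with the Galois-equivariance $\sigma(e_m(x,y))=e_m(\sigma(x),\sigma(y))$ recalled in \Cref{sec:pairing}, this yields
\begin{equation*}
 \langle a,\sigma(c)\rangle = \langle \sigma(a),\sigma^2(c)\rangle = \langle \sigma(a),c\rangle,
\end{equation*}
since $\sigma^2=\id$ on $J_2$.

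Putting these two pieces together and using bilinearity of $\langle-,-\rangle$ gives
\begin{equation*}
 q_a(\sigma(c)) = q_0(c)+\langle h,c\rangle + \langle\sigma(a),c\rangle = q_0(c) + \langle \sigma(a)+h,c\rangle,
\end{equation*}
which by \Cref{eq:polar2} equals $q_{\sigma(a)+h}(c)$. There is no real obstacle here; the statement is a formal consequence of \Cref{lem:thetainv}, the defining identity \Cref{eq:polar2}, and the Galois-equivariance of the Weil pairing. The only thing one should be mildly careful about is ensuring that $\sigma$ really commutes with $\lambda$ when moving it across the pairing, which holds because the principal polarization coming from the theta divisor is itself defined over $\R$.
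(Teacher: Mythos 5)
Your proof is correct and follows essentially the same route as the paper: apply \Cref{eq:polar2}, use \Cref{lem:thetainv} on the $q_0(\sigma(c))$ term, and move $\sigma$ across the pairing (the paper phrases this as complex conjugation being self-adjoint with respect to the Weil pairing, which you justify in slightly more detail via Galois-equivariance and the fact that $\lambda$ and $\mu_2$ are fixed by $\sigma$). No issues.
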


\begin{proof}
 For $c\in J_2$ we have
 \begin{equation*}
  q_a(\sigma(c))=q_0(\sigma(c))+\langle a,\sigma(c)\rangle=q_h(c)+\langle \sigma(a),c\rangle
 \end{equation*}
Here the first equality is \Cref{eq:polar2} and the second is \Cref{lem:thetainv} together with the fact that complex conjugation is self-adjoint with respect to the Weil pairing. Then \Cref{eq:polar1} implies the claim.
\end{proof}

\begin{cor}\label{cor:whenthetareal}
 For all $c\in J_2$, the theta characteristic $c+\eta_0$ is real if and only if the last $g-s$ entries of $c_l$ are equal to $a(X)$. In particular, the theta characteristic $\eta_0$ is real if and only if $a(X)=0$. 
\end{cor}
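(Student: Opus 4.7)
The plan is to translate the reality condition for the theta characteristic $c+\eta_0$ into a linear equation on $c$ in $J_2$, and then to read off this equation using the explicit symplectic basis furnished by \Cref{thm:basisj2}. By the discussion preceding \Cref{lem:thetainv}, a theta characteristic $\nu$ is real if and only if $q_\nu=q_\nu\circ\sigma$. Applying this to $\nu=c+\eta_0$ with $q_\nu=q_c$, \Cref{cor:conju} gives $q_c\circ\sigma=q_{\sigma(c)+h}$. Since the map $a\mapsto q_a$ is a bijection, the reality of $c+\eta_0$ is equivalent to the equation $c=\sigma(c)+h$ in $J_2$, i.e. to $\sigma(c)-c=h$ (recall that we work in $\Z_2$).

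Next I would compute $\sigma(c)-c$ using the matrix representation of $\sigma$ from \Cref{thm:basisj2}(iv). Writing $c=\binom{c_u}{c_l}$ we get
\begin{equation*}
 \sigma(c)-c=\begin{pmatrix}Hc_l\\0\end{pmatrix},
\end{equation*}
so the condition becomes $Hc_l=h_u$ together with the automatic identity $0=h_l$ (note that, by its definition in \Cref{lem:thetainv}, $h$ is a $\Z_2$-linear combination of the $v_i$ and hence $h_l=0$ in both cases). Using the block decomposition $H=\begin{pmatrix}0&0\\0&H'\end{pmatrix}$, the equation $Hc_l=h_u$ only involves the last $g-s$ coordinates $c_l^{(2)}$ of $c_l$ and reads $H'c_l^{(2)}=h_u^{(2)}$, where $h_u^{(2)}\in\Z_2^{g-s}$ is the all-ones vector if $a(X)=1$ and zero if $a(X)=0$.

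It remains to solve this small linear system in each of the two cases. If $a(X)=1$, then $H'=I_{g-s}$, and the unique solution is $c_l^{(2)}=(1,\ldots,1)^t$, i.e. the last $g-s$ entries of $c_l$ are all equal to $1=a(X)$. If $a(X)=0$, then $H'$ is a block-diagonal matrix whose diagonal blocks are $\begin{pmatrix}0&1\\1&0\end{pmatrix}$, hence invertible, and the unique solution is $c_l^{(2)}=0$, i.e. the last $g-s$ entries of $c_l$ are all $0=a(X)$. In both cases we obtain exactly the stated characterization. The final assertion about $\eta_0$ follows by specializing $c=0$: then $c_l=0$, and this equals $a(X)$ in the last $g-s$ entries precisely when $a(X)=0$.

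No step presents a real obstacle; the only subtlety is keeping the bookkeeping straight between the two summands $h$ and $\sigma(c)-c$ and verifying that $h_l=0$, which is why I would separate out the upper/lower block computation explicitly before splitting into the two cases.
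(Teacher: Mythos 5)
Your argument is correct and follows the paper's proof essentially verbatim: both reduce reality of $c+\eta_0$ via \Cref{cor:conju} to the equation $c=\sigma(c)+h$, write it in the chosen symplectic coordinates to get $Hc_l+h_u=0$ (using $h_l=0$), and then read off the solution from the block description of $H$ in \Cref{thm:basisj2}(iv). The only difference is that you spell out the final linear-algebra step (invertibility of $H'$ in each case) which the paper leaves to the reader.
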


\begin{proof}
 By \Cref{cor:conju} the theta characteristic $c+\eta_0$ is real if and only if 
 \begin{equation*}
  c=\sigma(c)+h
 \end{equation*}
 where $h$ is defined as in \Cref{lem:thetainv}. Expressed in the coordinates of our chosen symplectic basis $v_1,\ldots,v_g,w_1,\ldots,w_g$ of $J_2$, this means
 \begin{equation*}
  \begin{pmatrix}
   c_u\\c_l
  \end{pmatrix}
= \begin{pmatrix}
    I_g& H\\
    0 & I_g
   \end{pmatrix} \begin{pmatrix}
   c_u\\c_l
  \end{pmatrix}+\begin{pmatrix}
   h_u\\h_l
  \end{pmatrix}.
 \end{equation*}
 Since $h_l$ is zero, this reduces to
 \begin{equation*}
  Hc_l+h_u=0.
 \end{equation*}
 Now the claim follows from the description of $H$ in \Cref{thm:basisj2}.
\end{proof}

\begin{Def}[\cite{knebuschcurves}]
 A rational differential $\omega\neq0$ on $X$ is called \emph{definite} if $\omega$ has even order at all real points. It is called \emph{strictly definite} if it has no real poles or zeros.  Two definite differentials $\omega$ and $\omega'$ are \emph{equivalent} if $\omega'=\pm f\cdot\omega$ for some real rational function $f$ that is nonnegative on $X(\R)$ (wherever it is defined). A \emph{semi-orientation} on $X$ is an equivalence class $[\omega]$ of definite differentials on $X$.
\end{Def}

\begin{rem}
 There are exactly $2^s$ different semi-orientations on $X$ and each one contains a strictly definite differential by \cite[p.63]{knebuschcurves}. A strictly definite differential defines a volume form on $X(\R)$ and thus defines an orientation in the classical sense. Therefore, a semi-orientation is an equivalence class of orientations modulo global reversion and vice versa.  The \emph{complex semi-orientation} is the equivalence class of the orientation on $X(\R)$ induced by an orientation of one of the connected components of $X(\C)\setminus X(\R)$ modulo global reversion.
\end{rem}

\begin{Def}
 If $\nu$ is a real theta characteristic of $X$ represented by a divisor $D$, then $2D$ is the divisor of a definite differential $\omega$. The semi-orientation $[\omega]$ does not depend on the representing divisor $D$. We say that \emph{$[\omega]$ is induced by $\nu$} and we write $[\nu]:=[\omega]$. 
\end{Def}

\begin{rem}\label{rem:holonotcomplex}
 It was pointed out in \cite[Proof of Proposition~4.2]{vin93} that if $\omega$ is a regular definite differential, then $[\omega]$ is not the complex semi-orientation. In particular, the complex semi-orientation is not induced by any real odd theta characteristic.
\end{rem}

\begin{thm}\label{thm:thetaa1}
 Let $a(X)=1$. Let $\Omega$ be a semi-orientation on $X$ and $\epsilon\in\Z_2^s$. There are exactly $2^{g-s-1}$ even and $2^{g-s-1}$ odd real theta characteristics $\nu$ on $X$ such that $[\nu]=\Omega$ and $\Par(\nu)=\epsilon$.
\end{thm}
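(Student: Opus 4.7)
The plan is to treat the set of real theta characteristics on $X$ as a torsor over $J_2(\R)$ (of size $2^{g+s}$ by \Cref{cor:whenthetareal}) and to study the natural map
\begin{equation*}
\Phi\colon \nu\mapsto([\nu],\Par(\nu))
\end{equation*}
into the product of the set $\cS$ of semi-orientations on $X$ (itself a $\Z_2^s$-torsor) with $\Z_2^s$.

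First I would verify that $\Phi$ is equivariant along the group homomorphism $(\sg,\Par)\colon J_2(\R)\to\Z_2^s\oplus\Z_2^s$. Writing $\nu$ and $a\in J_2(\R)$ as classes of $\sigma$-invariant divisors $D$ and $E$, and choosing $\omega_\nu$, $f$ with $\divv(\omega_\nu)=2D$ and $\divv(f)=2E$, the product $\omega_\nu\cdot f$ is a definite differential representing $\nu+a$ that carries sign $(-1)^{\sg_i(a)}$ on $X_i$. Hence $[\nu+a]$ differs from $[\nu]$ precisely by flipping orientation on those components indexed by $\sg(a)$. Combined with the additivity $\Par(\nu+a)=\Par(\nu)+\Par(a)$, this gives the equivariance. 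By \Cref{lem:homsurj} the map $(\sg,\Par)$ is surjective with kernel of $\Z_2$-dimension $g-s$, so $\Phi$ is surjective and each of its fibers contains exactly $2^{g-s}$ real theta characteristics.

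Next I would count parities within a fixed fiber $\Phi^{-1}(\Omega,\epsilon)$. Fix $\nu_0=c_0+\eta_0$ in the fiber; the remaining elements are $\nu_0+a$ with $a\in\ker(\sg,\Par)\cap J_2(\R)=\mathrm{span}(v_{s+1},\ldots,v_g)$ by \Cref{thm:basisj2}. Since
\begin{equation*}
\arf(q_{c_0+a})=q_0(c_0+a)=q_0(c_0)+q_0(a)+\langle c_0,a\rangle,
\end{equation*}
and since $(v_i)_l=0$ for every $i$ forces both $q_0$ and $\langle\cdot,\cdot\rangle$ to vanish on $\mathrm{span}(v_{s+1},\ldots,v_g)$, the parity reduces to $q_0(c_0)+\langle c_0,a\rangle$. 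A direct coordinate computation gives $\langle c_0,v_i\rangle=(c_0)_l[i]$ for $i\geq s+1$, and by \Cref{cor:whenthetareal} the hypothesis $a(X)=1$ forces each of these entries to equal $1$. Hence $a\mapsto\langle c_0,a\rangle$ is the nonzero ``sum of coordinates'' functional on $\mathrm{span}(v_{s+1},\ldots,v_g)$ (noting that $a(X)=1$ rules out M-curves and therefore guarantees $g>s$). Such a functional hits each value on exactly half its domain, producing precisely $2^{g-s-1}$ even and $2^{g-s-1}$ odd real theta characteristics per fiber.

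The most delicate step is the equivariance of $\Phi$ with respect to $\sg$, which requires some care with divisors and the definition of semi-orientations; once this is in place, the parity count is a routine linear-algebra calculation in the symplectic basis of \Cref{thm:basisj2}.
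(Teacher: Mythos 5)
Your argument is correct and is essentially the paper's own proof: both reduce the count to the coordinate description of real theta characteristics from \Cref{cor:whenthetareal} in the symplectic basis of \Cref{thm:basisj2}, observe that fixing $([\nu],\Par(\nu))$ fixes the first $s$ entries of $c_u$ and $c_l$ while leaving the last $g-s$ entries of $c_u$ free (giving $2^{g-s}$ elements per fiber), and then note that the parity $\arf(q_c)=c_u^tc_l$ restricts to a nonconstant affine-linear function on that fiber because the last $g-s$ entries of $c_l$ all equal $1$. Your torsor/equivariance packaging just makes explicit what the paper leaves implicit, and your parity computation via $q_0(c_0)+\langle c_0,a\rangle$ is the same calculation in different notation.
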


\begin{proof}
 By \Cref{cor:whenthetareal} the set of real theta characteristics corresponds to the set of all $q_c$ such that the last $g-s$ entries of $c_l$ are equal to $1$. Since we chose our basis according to \Cref{thm:basisj2}, there are $u_1,u_2\in\Z_2^s$ such that the set of real theta characteristics $\nu$ on $X$ such that $[\nu]=\Omega$ and $\Par(\nu)=\epsilon$ corresponds to the set of all $q_c$ where
 \begin{enumerate}[(i)]
  \item the first $s$ entries of $c_u$ are equal to $u_1$,
  \item the first $s$ entries of $c_l$ are equal to $u_2$, and
  \item the last $g-s$ entries of $c_l$ are each equal to $1$.
 \end{enumerate}
 Thus there are in total $2^{g-s}$ such real theta characteristics. Finally, such a theta characteristic is even if and only if the sum of the last $g-s$ entries of $c_u$ is equal to $u_1^tu_2$. Thus there are $2^{g-s-1}$ such real even theta characteristics. The remaining ones are odd which implies the claim.
\end{proof}

\begin{Def}
 Let $\Omega_1, \Omega_2$ two semi-orientations on $X$. Let $\omega_1,\omega_2$ rational differentials that induce $\Omega_1, \Omega_2$ such that $\omega_2=f\cdot\omega_1$ for a rational function $f$ which is nonnegative on $X_0$. For $j=0,\ldots,s$ we say that $\Omega_1$ and $\Omega_2$ \emph{agree} (\emph{differ}) on $X_j$ if $f$ is nonnegative (nonpositive) on $X_j$. In particular, by definition, any two semi-orientations agree on $X_0$.
\end{Def}

\begin{thm}\label{thm:thetaa0}
 Let $a(X)=0$. Let $\Omega$ be a semi-orientation on $X$ and $\epsilon\in\Z_2^s$. 
 Let $n$ the number of indices $i\in\{1,\ldots,s\}$ such that both $\epsilon_i=0$ and $\Omega$ differs from the complex semi-orientation on $X_i$.
 There are exactly $2^{g-s}$ real theta characteristics $\nu$ on $X$ such that $[\nu]=\Omega$ and $\Par(\nu)=\epsilon$. These are all odd resp. even depending on whether 
  $n$ is odd or even.
\end{thm}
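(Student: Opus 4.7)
My plan closely follows the proof of \Cref{thm:thetaa1}. By \Cref{cor:whenthetareal}, in the $a(X)=0$ case every real theta characteristic has the form $\nu=c+\eta_0$ with $c\in J_2(\R)$, i.e.\ the last $g-s$ entries of $c_l$ vanish. Expressing the conditions $[\nu]=\Omega$ and $\Par(\nu)=\epsilon$ in the basis from \Cref{thm:basisj2} fixes the first $s$ entries of $c_u$ to a vector $u_1\in\Z_2^s$ (recording on which $X_j$ the semi-orientation $\Omega$ differs from $[\eta_0]$) and the first $s$ entries of $c_l$ to $u_2=\epsilon+\Par(\eta_0)$, while the last $g-s$ coordinates of $c_u$ remain free. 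This already yields exactly $2^{g-s}$ real theta characteristics with the prescribed $(\Omega,\epsilon)$.

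Because the last $g-s$ entries of $c_l$ vanish, the parity $\arf(q_c)=c_u^t c_l$ collapses to $u_1^t u_2$, which depends only on $(\Omega,\epsilon)$ and not on the $2^{g-s}$ free coordinates. Hence all $2^{g-s}$ real theta characteristics in the prescribed class share a common parity, establishing the ``all even'' vs.\ ``all odd'' dichotomy.

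Identifying this common parity with $n\pmod 2$ reduces to two auxiliary properties of the base theta characteristic $\eta_0$: (A) the semi-orientation $[\eta_0]$ equals the complex one and (B) $\Par_j(\eta_0)=1$ for every $j\in\{1,\ldots,s\}$. Granting both, $u_1^t u_2=\sum_{j=1}^s\gamma_j(1-\epsilon_j)\equiv n\pmod 2$, where $\gamma_j$ records whether $\Omega$ differs from the complex semi-orientation on $X_j$, which is the desired identity. Fact (A) is essentially a reading of \Cref{rem:holonotcomplex}: specializing the preceding paragraph to $\Omega$ equal to the complex semi-orientation expresses the common parity as the linear function $\alpha^t(\epsilon+\Par(\eta_0))$ of $\epsilon$ with $\alpha\in\Z_2^s$ encoding where $[\eta_0]$ differs from it; since this parity must vanish for every $\epsilon$ by Vinnikov, we conclude $\alpha=0$.

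The main obstacle is Fact (B). My approach is to leverage the stronger form of Vinnikov's observation (the complex semi-orientation is induced by no real theta characteristic carrying a holomorphic section) together with a topological parity formula of Atiyah--Gross--Harris type. Under (A), both $\eta_0$ and each $w_j+\eta_0$ for $j\leq s$ have semi-orientation equal to the complex one and thus carry no holomorphic section; combining this non-effectiveness with the spin-structure interpretation of $\Par_j$ on the real components $X_j$ forces the nontrivial spin structure on each $X_j$ with $j\geq 1$, which is precisely the content of (B).
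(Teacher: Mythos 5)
Your reduction of the theorem to the two facts (A) $[\eta_0]$ is the complex semi-orientation and (B) $\Par(\eta_0)=(1,\dots,1)$ is exactly the structure of the paper's proof: the count $2^{g-s}$, the constancy of the parity $u_1^tu_2$ on each $(\Omega,\epsilon)$-class, and the derivation of (A) from \Cref{rem:holonotcomplex} (your ``vary $\epsilon$ and kill $\alpha$'' argument is the same as the paper's observation that $\Omega_0$ is the unique semi-orientation induced by no real odd theta characteristic) are all correct and match the paper.

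The gap is Fact (B). The paper obtains $\tau=\Par(\eta_0)=(1,\dots,1)$ by the same uniqueness trick --- $\tau$ is the unique $\Par$-class carrying no real odd theta characteristic --- and then cites Gross--Harris (p.~169) for the nontrivial input that this distinguished class is $(1,\dots,1)$ when $a(X)=0$. Your substitute argument does not close this. Knowing that $\eta_0$ and each $w_j+\eta_0$ induce the complex semi-orientation and hence satisfy $h^0=0$ (by \Cref{rem:holonotcomplex}) only tells you these theta characteristics are even and non-effective; it gives no control on $\Par_j(\eta_0)$, since plenty of even non-effective real theta characteristics have arbitrary $\Par$-vectors, and the identity $q_{\eta_0}(w_j)=h^0(w_j+\eta_0)+h^0(\eta_0)=0$ merely reproduces $q_0(w_j)=0$, which holds by construction. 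The invoked ``topological parity formula of Atiyah--Gross--Harris type'' is never stated, and the step ``non-effectiveness forces the nontrivial spin structure on each $X_j$'' is exactly the assertion to be proved, not a consequence of what precedes it. You need either the precise Gross--Harris statement (every $\epsilon\neq(1,\dots,1)$ is realized by a real \emph{odd} theta characteristic in the dividing case, and $(1,\dots,1)$ is not) or a genuine spin-cobordism argument on the bounding half $X^+$ of $X(\C)\setminus X(\R)$; as written, Fact (B) is unproved.
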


\begin{proof}
 Recall that $\eta_0$ is the theta characteristic corresponding to the quadratic form $q_0$. We have already seen that $\eta_0$ is real and even. Let $\Omega_0$ the semi-orientation induced by $\eta_0$ and $\tau=\Par(\eta_0)\in\Z_2^s$.
 By \Cref{cor:whenthetareal} the set of real theta characteristics corresponds to the set of all $q_c$ such that the last $g-s$ entries of $c_l$ are equal to $0$. 
 Let $u_1\in\Z_2^g$ be the vector whose $i$ths entry is $0$ if and only if $\Omega$ agrees with $\Omega_0$ on $X_i$. Similarly, let $u_2=\epsilon-\tau\in\Z_2^g$.
 Since we chose our basis according to \Cref{thm:basisj2},  the set of real theta characteristics $\nu$ on $X$ such that $[\nu]=\Omega$ and $\Par(\nu)=\epsilon$ corresponds to the set of all $q_c$ where
 \begin{enumerate}[(i)]
  \item the first $s$ entries of $c_u$ are equal to $u_1$,
  \item the first $s$ entries of $c_l$ are equal to $u_2$, and
  \item the last $g-s$ entries of $c_l$ are each equal to $0$.
 \end{enumerate}
 Thus there are $2^{g-s}$ such real theta characteristics. These are all even or odd depending on whether $u_1^tu_2$ is zero or one. The case $u_1=0$ implies in particular that there is no real odd theta characteristic which induces $\Omega_0$. By \Cref{rem:holonotcomplex} the complex semi-orientation is not induced by a real odd theta characteristic. Thus $\Omega_0$, being the only semi-orientation not induced by a real odd theta characteristic, must be the complex semi-orientation. Similarly, the case $u_2=0$ implies that there is no odd real theta characteristic $\nu$ with $\Par(\nu)=\tau$. It was shown in \cite[p.169]{grossharris} that this implies $\tau=(1,\ldots,1)$.  
 This shows that $u_1^tu_2$ is zero resp. one if and only if $n$ is even resp. odd.
\end{proof}

The proof of \Cref{thm:thetaa0} has shown:

\begin{cor}\label{cor:distinguished}
 If $a(X)=0$, then the theta characteristic $\eta_0$ corresponding to $q_0$ induces the complex semi-orientation and $\Par(\eta_0)=(1,\ldots,1)$.
\end{cor}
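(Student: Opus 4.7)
The plan is to read off the two claims directly from the parity formula and counting established in the proof of \Cref{thm:thetaa0}. Recall that, in that proof, given a semi-orientation $\Omega$ and $\epsilon\in\Z_2^s$, the real theta characteristics $\nu$ with $[\nu]=\Omega$ and $\Par(\nu)=\epsilon$ all have the same parity, namely that of $u_1^tu_2$, where $u_1\in\Z_2^s$ records the components $X_i$ on which $\Omega$ differs from $\Omega_0:=[\eta_0]$ and $u_2=\epsilon-\tau$ with $\tau:=\Par(\eta_0)$. I want to use this parity dichotomy twice.

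For the semi-orientation claim, I first observe that $u_1=0$ happens exactly when $\Omega=\Omega_0$, in which case $u_1^tu_2=0$ for every $\epsilon$, so no real odd theta characteristic induces $\Omega_0$. Conversely, if $u_1\neq 0$, one can always pick $u_2\in\Z_2^s$ with $u_1^tu_2=1$, producing a real odd theta characteristic inducing $\Omega$. Hence $\Omega_0$ is the \emph{unique} semi-orientation not induced by any real odd theta characteristic. Now I invoke \Cref{rem:holonotcomplex}, which asserts that the complex semi-orientation is also not induced by any real odd theta characteristic. By the uniqueness just established, $\Omega_0$ must coincide with the complex semi-orientation.

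The proof of $\Par(\eta_0)=(1,\ldots,1)$ is completely parallel: fixing $\epsilon$, the vector $u_2$ is zero precisely when $\epsilon=\tau$, and in that case every real theta characteristic with $\Par(\nu)=\tau$ is even. For $\epsilon\neq\tau$ we can instead pick $u_1$ so that $u_1^tu_2=1$. Therefore $\tau$ is the unique element of $\Z_2^s$ not realized as $\Par(\nu)$ of any real odd theta characteristic. The result from \cite[p.~169]{grossharris} cited in the proof states that this distinguished element is $(1,\ldots,1)$, and so $\Par(\eta_0)=\tau=(1,\ldots,1)$.

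The only nontrivial point is the matching step: recognizing that two \emph{independently characterized} objects (the semi-orientation $\Omega_0$ defined algebraically via $q_0$, and the complex semi-orientation defined topologically) must coincide because each is the unique semi-orientation avoided by real odd theta characteristics; and likewise for $\tau$ versus $(1,\ldots,1)$. Beyond this matching argument, no further calculation is needed, since the combinatorics of $u_1^tu_2$ was already carried out inside \Cref{thm:thetaa0}.
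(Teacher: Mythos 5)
Your proposal is correct and is essentially the paper's own argument: the paper derives \Cref{cor:distinguished} directly from the proof of \Cref{thm:thetaa0}, where the cases $u_1=0$ and $u_2=0$ identify $\Omega_0$ as the unique semi-orientation not induced by a real odd theta characteristic and $\tau$ as the unique unrealized parity vector, matched against \Cref{rem:holonotcomplex} and the result from \cite[p.~169]{grossharris} exactly as you do. Your write-up merely makes explicit the uniqueness step (choosing $u_2$ with $u_1^tu_2=1$, resp.\ $u_1$ with $u_1^tu_2=1$) that the paper leaves implicit.
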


\subsection{Totally real theta characteristics}
There has been recent interest in \emph{totally real} theta characteristics.

\begin{Def}
 A theta characteristic $\nu$ on $X$ is called \emph{totally real} if $\nu=[P_1+\ldots+P_{g-1}]$ for some $P_i\in X(\R)$.
\end{Def}

The results from the previous section can be used to reprove the bounds from \cite{kumtheta}.

\begin{cor}$\,$
 \begin{enumerate}[(i)]
  \item Let $s=g-2$ and $a(X)=1$. Each semi-orientation is induced by at least $2$ totally real odd theta characteristics. In total, there are at least $2^{g-1}$ totally real odd theta characteristics.
  \item Let $s=g-1$. Each semi-orientation is induced by at least $g$ totally real odd theta characteristics. In total, there are at least $g\cdot 2^{g-1}$ totally real odd theta characteristics.
  \item Let $X$ be an $M$-curve, i.e. $s=g$. Then there are at least $\frac{g\cdot (g-1)}{2}\cdot 2^{g-1}$ totally real odd theta characteristics.
 \end{enumerate}
\end{cor}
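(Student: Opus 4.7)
My proposed plan is to reduce the three statements to counting arguments using \Cref{thm:thetaa1} and \Cref{thm:thetaa0}, combined with an explicit construction of totally real theta characteristics on $X$. The key observation is that every totally real odd theta characteristic $\nu = [P_1 + \cdots + P_{g-1}]$, with $P_i \in X(\R)$, has parity vector $\Par(\nu) \in \Z_2^s$ given by the parities of the multiplicities $n_j = |\{i : P_i \in X_j\}|$, and induces the semi-orientation of any definite differential $\omega$ with $\divv(\omega) = 2\sum_j n_j P^{(j)}$; thus each totally real odd theta characteristic determines a pair $(\Omega,\epsilon)$ whose count can be matched against the count of real odd theta characteristics provided by the two theorems.

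Concretely, in each case I would first fix a base point on every real component and consider effective divisors of degree $g-1$ obtained by distributing real points among the components of $X(\R)$; among these, one identifies those whose linear equivalence class is a theta characteristic. In case (i), with $a(X)=1$ and $s = g-2$, \Cref{thm:thetaa1} provides exactly $2$ real odd theta characteristics for each pair $(\Omega,\epsilon)$, so exhibiting even one $\epsilon$ per semi-orientation $\Omega$ that admits totally real realizations already yields $2 \cdot 2^{g-2} = 2^{g-1}$ totally real characteristics in total. Case (ii), with $s = g-1$, follows the same pattern using \Cref{thm:thetaa1} or \Cref{thm:thetaa0} according to the value of $a(X)$, together with an enumeration of $g$ admissible $(\Omega,\epsilon)$ pairs per semi-orientation. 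In case (iii), the $M$-curve case where $s = g$ and $a(X) = 0$, \Cref{thm:thetaa0} gives a unique real theta characteristic for each pair $(\Omega,\epsilon)$, and the bound $\binom{g}{2} \cdot 2^{g-1}$ would arise by counting admissible pairs; I expect the factor $\binom{g}{2}$ to come from choices of pairs of real components on which totally real divisors of degree $g-1$ can be concentrated, while the factor $2^{g-1}$ corresponds to the admissible semi-orientations singled out by \Cref{thm:thetaa0}.

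The main obstacle will be the identification step: verifying that the real odd theta characteristic produced by \Cref{thm:thetaa1} or \Cref{thm:thetaa0} for a given admissible pair $(\Omega,\epsilon)$ is actually represented by a divisor supported entirely on $X(\R)$, rather than merely by some real divisor that may contain complex-conjugate pairs. Equivalently, in the generic case $h^0(\nu) = 1$ one must rule out that the unique effective representative of $\nu$ contains such complex-conjugate pairs, which typically requires either a dimension-theoretic argument in the symmetric product $X^{(g-1)}$, a deformation argument within its real locus, or a direct comparison between the explicit totally real divisors constructed above and the real odd theta characteristics parametrized by the data $(\Omega,\epsilon)$. Once this has been established, the claimed numerology in each of the three cases follows immediately from \Cref{thm:thetaa1} and \Cref{thm:thetaa0} together with the combinatorial count of admissible $(\Omega,\epsilon)$ pairs.
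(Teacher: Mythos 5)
Your overall strategy matches the paper's: fix the invariants $(\Omega,\epsilon)$ and invoke \Cref{thm:thetaa1} and \Cref{thm:thetaa0} to count real odd theta characteristics with those invariants. However, the step you defer as the ``main obstacle'' --- showing that the real odd theta characteristics so produced are represented by divisors supported entirely on $X(\R)$ --- is precisely the content of the proof, and you neither close it nor point at the right tool. No dimension count in $X^{(g-1)}$, deformation argument, or auxiliary construction of totally real divisors is needed. The argument is a one-line parity observation: choose $\epsilon$ (together with the value of $\Par_0$, which is forced by $\epsilon$ and the degree $g-1$) so that the theta characteristic $\nu$ has \emph{odd} parity on exactly $g-1$ of the $s+1$ components of $X(\R)$. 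Since $\nu$ is odd, $h^0(\nu)\geq 1$, so $\nu$ has a real effective representative $D$ of degree $g-1$; odd parity on a component forces $D$ to contain at least one point of that component, and with $g-1$ such components and only $g-1$ points to distribute, $D$ consists of exactly one real point on each of them. Hence $\nu$ is totally real, with no case analysis on where the representative might hide conjugate pairs. As written, your proposal leaves the theorem unproved.

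A few smaller points. In the hypotheses of the corollary the relevant count of components is $s+1$ (the components are $X_0,\dots,X_s$), so in case (i) one takes $\epsilon=(1,\dots,1)$, making all $g-1$ components odd; in case (ii) one chooses which $g-1$ of the $g$ components are odd, giving the factor $g$; and in case (iii) the combinatorial factor comes from choosing $g-1$ of the $g+1$ components to be odd, with only half of the $2^{g}$ semi-orientations yielding \emph{odd} characteristics for each such choice by the parity criterion in \Cref{thm:thetaa0} --- not from ``pairs of components on which divisors are concentrated.'' Also, in case (ii) the hypothesis $s=g-1$ already forces $a(X)=1$, so only \Cref{thm:thetaa1} is needed there.
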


\begin{proof}
 Let $s=g-2$ and $a(X)=1$. By \Cref{thm:thetaa1} every semi-orientation  is induced by   $2$ odd real theta characteristics that have odd degree on all $g-1$ components of $X(\R)$. Such theta characteristics are totally real. This proves $(i)$.
 
 Let $s=g-1$ which implies $a(X)=1$. By \Cref{thm:thetaa1} every semi-orientation  is induced by  one odd real theta characteristic which has odd degree on some chosen $g-1$ components of $X(\R)$. Such theta characteristics are totally real and there are $g$ possibilities of choosing $g-1$ out of $g$ components. This proves $(ii)$.
 
 Let $s=g$ which implies $a(X)=0$. For each choice of $g-1$ components of $X(\R)$ there are $2^{g-1}$ real odd theta characteristics that have odd degree precisely on these components by \Cref{thm:thetaa0}.
\end{proof}

We can summarize the three cases of the preceding corollary as follows.

\begin{cor}
 If $g\leq s+a(X)+1$, then there are at least 
 \begin{equation*}
 \binom{s+1}{g-1}\cdot 2^{g-1} 
 \end{equation*}
  totally real odd theta characteristics.
\end{cor}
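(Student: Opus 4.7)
The plan is to partition the totally real odd theta characteristics $\nu$ by the subset $S\subseteq\{0,1,\ldots,s\}$ of real components $X_i$ on which $\nu$ has odd degree. Because a totally real effective representative has degree $g-1$ and each $X_i$ with $i\in S$ must contribute at least one real point, every such $S$ has cardinality $g-1$; there are $\binom{s+1}{g-1}$ of them. I would first show that every real odd theta characteristic with such a prescribed parity pattern is automatically totally real, and then invoke \Cref{thm:thetaa1}, \Cref{thm:thetaa0} to count at least $2^{g-1}$ such characteristics for each $S$.

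For the first step, fix a $(g-1)$-subset $S$ and suppose $\nu$ is a real odd theta characteristic with $\Par_i(\nu)=1$ for $i\in S\cap\{1,\ldots,s\}$ and $\Par_i(\nu)=0$ for $i\in\{1,\ldots,s\}\setminus S$, which also forces the parity on $X_0$ via the degree relation. Pick any real effective representative $D$ of $\nu$. The number of real points of $D$ on each $X_i$ with $i\in S$ is odd and hence at least one, so summing over $i\in S$ gives at least $|S|=g-1$ real points in total; on the other hand $\deg D=g-1$ bounds them above. Equality therefore holds componentwise, forcing $D$ to consist of exactly one real point on each $X_i$ with $i\in S$, no real points elsewhere, and no complex-conjugate pairs. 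In particular, $\nu$ is totally real.

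For the counting step, $\Par(\nu)\in\Z_2^s$ is a class invariant and the map sending a $(g-1)$-subset $S$ to the indicator of $S\cap\{1,\ldots,s\}$ is injective, so contributions from different $S$ are disjoint. Summing \Cref{thm:thetaa1} over all $2^s$ semi-orientations produces $2^s\cdot 2^{g-s-1}=2^{g-1}$ odd real theta characteristics per $S$ in the case $a(X)=1$; the Harnack--Weichold inequality $s\le g-1$ keeps the exponent non-negative. In the case $a(X)=0$, the Harnack--Weichold parity condition $s\equiv g\pmod 2$ combined with the hypothesis $g\le s+1$ strengthens to $g\le s$; then for every $(g-1)$-subset $S$ the set $T=\{i\in\{1,\ldots,s\}:i\notin S\}$ is non-empty, so exactly half of the semi-orientations make the quantity $n$ of \Cref{thm:thetaa0} odd, contributing $2^{s-1}\cdot 2^{g-s}=2^{g-1}$ odd real theta characteristics per $S$.

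The main subtlety is that the $a(X)=0$ count would collapse if $T$ were allowed to be empty, which would correspond to the borderline configuration $a(X)=0$, $g=s+1$; this is precisely what the Harnack--Weichold parity obstruction excludes, so the hypothesis $g\le s+a(X)+1$ uniformly yields $2^{g-1}$ totally real odd theta characteristics per pattern $S$, and summing over the $\binom{s+1}{g-1}$ patterns completes the proof.
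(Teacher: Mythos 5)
Your proof is correct and follows essentially the same route as the paper, which obtains the bound by running exactly your mechanism in the three possible cases ($s\in\{g-2,g-1\}$ with $a(X)=1$, and $s=g$ with $a(X)=0$): a real odd theta characteristic whose parity pattern is the indicator of a $(g-1)$-element subset of the components is forced by the degree count to be totally real, and \Cref{thm:thetaa1} resp.\ \Cref{thm:thetaa0}, summed over all $2^s$ semi-orientations, yields $2^{g-1}$ such characteristics per pattern. One harmless slip: your opening assertion that \emph{every} totally real odd theta characteristic has odd parity on exactly $g-1$ components does not follow from your degree argument (a totally real representative may place two points on the same component), but this is immaterial since you only need the characteristics counted for distinct patterns $S$ to be pairwise disjoint and totally real, which your argument does establish.
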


Trivial upper bounds on the number of totally real odd theta characteristics are simply given by the number of real odd theta characteristics. It is not known whether these upper and lower bounds are sharp.

\begin{frag}
 Are there curves of any possible type $(g,s,a)$ such that all of its real odd theta characteristics are totally real?
\end{frag}

\begin{frag}
 If $g\leq s+a(X)+1$, are there curves of type $(g,s,a)$ such that exactly $\binom{s+1}{g-1}\cdot 2^{g-1}$ of its real odd theta characteristics are totally real?
\end{frag}

\begin{frag}
 If $g>s+a(X)+1$, are there curves of type $(g,s,a)$ such that none of its real odd theta characteristics are totally real?
\end{frag}

\subsection{Signed counts of real odd theta characteristics}
In the spirit of \Cref{thm:signed2tor} we can also derive some signed counts for real odd theta characteristics which only depend on the genus $g$ of $X$.

\begin{lem}\label{lem:lagrarf}
 Let $\nu$ be a real theta characteristic on $X$. If $a(X)=0$, then we assume that $\nu$ does not induce the complex semi-orientation.
 There are exactly $2^{g-1}$ elements $a\in J_2^0(\R)$ such that $a+\nu$ is a real odd theta characteristic.
\end{lem}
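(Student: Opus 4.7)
The plan is to reduce the counting problem to the assertion that $q_\nu$ restricts to a nonzero linear form on $J_2^0(\R)\subset J_2$; the count of $2^{g-1}$ will then be immediate. First I would note that, since $a\in J_2^0(\R)\subseteq J_2(\R)$ and $\nu$ are both fixed by complex conjugation, $a+\nu$ is automatically real, so the task is to count $a$ for which $a+\nu$ is odd. By Mumford's bijection $\nu\mapsto q_\nu$ combined with the general identity $\arf(q_\nu+\langle a,-\rangle)=\arf(q_\nu)+q_\nu(a)$ (which is a direct consequence of $q_\nu(x+a)=q_\nu(x)+q_\nu(a)+\langle x,a\rangle$), the condition ``$a+\nu$ is odd'' rewrites as $q_\nu(a)=1+\arf(q_\nu)\in\Z_2$.

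Next I would exploit the fact, already used in the proof of \Cref{thm:signed2tor}, that $J_2^0(\R)$ is a maximal isotropic (Lagrangian) subspace of $(J_2,\langle-,-\rangle)$. Since $\langle a_1,a_2\rangle=0$ for all $a_1,a_2\in J_2^0(\R)$, the restriction $L_\nu:=q_\nu|_{J_2^0(\R)}$ is a group homomorphism $J_2^0(\R)\to\Z_2$. As $\dim_{\Z_2}J_2^0(\R)=g$, if $L_\nu$ is not identically zero then each of its two fibres has exactly $2^{g-1}$ elements, regardless of the value of $1+\arf(q_\nu)$. The entire problem thus reduces to verifying $L_\nu\not\equiv0$ under the stated hypotheses.

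For the non-vanishing I would work in the symplectic basis of \Cref{thm:basisj2}, in which $J_2^0(\R)=\langle v_1,\ldots,v_g\rangle$. Setting $c:=\nu-\eta_0\in J_2$ and using $q_0(x)=x_u^tx_l$ together with the coordinate formula for the Weil pairing, a short computation gives $q_\nu(v_i)=q_0(v_i)+\langle c,v_i\rangle=(c_l)_i$, so $L_\nu$ vanishes iff the lower half $c_l\in\Z_2^g$ is zero. By \Cref{cor:whenthetareal}, realness of $\nu$ already forces the last $g-s$ entries of $c_l$ to equal $a(X)$. If $a(X)=1$, those entries are all $1$ and $g-s\geq1$ by the Harnack--Klein inequality, so $c_l\neq0$ unconditionally and no further hypothesis is needed. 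If $a(X)=0$ those entries vanish, and the question reduces to whether the first $s$ entries of $c_l$ are all zero; by \Cref{thm:basisj2}(ii)--(iii) and \Cref{cor:distinguished} this is equivalent to $\Par(\nu)=(1,\ldots,1)=\Par(\eta_0)$.

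The main obstacle will be this last case: extracting from the hypothesis that $\nu$ does not induce the complex semi-orientation the arithmetic statement needed to rule out $c_l=0$. To close this gap I would appeal to the $(u_1,u_2)$-parametrisation of real theta characteristics set up in the proof of \Cref{thm:thetaa0}, where $u_1$ encodes $[\nu]$ relative to $\Omega_0$ and $u_2=\Par(\nu)-\Par(\eta_0)$ encodes the parity, and combine it with \Cref{cor:distinguished} (so that $\eta_0$ itself realises $([\nu],\Par(\nu))=(\Omega_0,(1,\ldots,1))$) together with the parity formula in terms of $n$ from \Cref{thm:thetaa0}. The hypothesis $[\nu]\neq\Omega_0$ then precludes the obstructing case and $L_\nu\not\equiv0$ follows.
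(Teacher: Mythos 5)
Your first three paragraphs are correct and follow essentially the same route as the paper: the paper's proof likewise reduces the statement to the affine--linear equation $a_u^t c_l=1+c_u^tc_l$ on the Lagrangian $J_2^0(\R)=\{a\in J_2: a_l=0\}$ together with the claim that $c_l\neq 0$, which yields exactly $2^{g-1}$ solutions. Your identification of $L_\nu$ with $a\mapsto a_u^tc_l$, the case $a(X)=1$ (where indeed $g-s\geq 1$, since a non-dividing curve has at most $g$ ovals), and the translation ``$c_l=0$ if and only if $\Par(\nu)=(1,\dots,1)=\Par(\eta_0)$'' when $a(X)=0$ are all fine.

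The gap is exactly where you flag it, and the repair you sketch does not close it. In the parametrisation of \Cref{thm:thetaa0} the semi-orientation $[\nu]$ is recorded by $u_1$, i.e.\ by the first $s$ entries of $c_u$ (via $\sg$), while the first $s$ entries of $c_l$ record $\Par(\nu)$; the hypothesis $[\nu]\neq\Omega_0$ therefore constrains $c_u$ and says nothing about $c_l$, so it cannot preclude $c_l=0$. Worse, the obstructing case genuinely occurs: when $a(X)=0$ and $s\geq1$, \Cref{thm:thetaa0} produces, for every semi-orientation $\Omega\neq\Omega_0$, exactly $2^{g-s}$ real (all even) theta characteristics with $\Par(\nu)=(1,\dots,1)$ and $[\nu]=\Omega$; for these $c_l=0$, your $L_\nu$ vanishes identically, and the number of $a\in J_2^0(\R)$ with $a+\nu$ odd is $0$, not $2^{g-1}$. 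This is visible on the paper's own genus-two example $y^2=\prod_{i=0}^5(x-i)$: the four real theta characteristics with $\Par(\nu)=(1,1)$ are all even and form a single orbit under $J_2^0(\R)=\{0,a_{05},a_{12},a_{34}\}$, only one of them induces the complex semi-orientation, yet for each of them the count in the lemma is $0$. So the statement needs, in the case $a(X)=0$, the hypothesis $\Par(\nu)\neq(1,\dots,1)$ rather than the one given. For what it is worth, the paper's proof simply asserts ``our assumptions imply $c_l\neq0$'' without justification, so you have located a real defect rather than merely failed to reproduce an argument; note that \Cref{thm:oddcount} itself survives, because the sum there equals $2^{g-1}$ as soon as the character $b\mapsto(-1)^{\langle b,c\rangle}$ is nontrivial on $J_2(\R)$, which fails only when the first $s$ entries of both $c_u$ and $c_l$ vanish, i.e.\ when $\nu$ has $\Par(\nu)=(1,\dots,1)$ \emph{and} induces the complex semi-orientation.
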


\begin{proof}
 Let $c\in J_2$ such that $q_c=q_\nu$. Our assumptions imply that $c_l\in\Z_2^g$ is not the zero vector. For $a\in J_2$ the condition $a\in J_2^0(\R)$ translates to $a_l=0$ and $a+\nu$ being odd to 
 \begin{equation*}
 1=\arf(q_{a+c})=(a+c)_u^t c_l \Leftrightarrow a_u^t c_l=1+c_u^tc_l.
 \end{equation*}
 Since $c_l\neq0$, there are exactly $2^{g-1}$ such $a$.
\end{proof}

\begin{thm}\label{thm:oddcount}
 Let $\nu$ a real theta characteristic on $X$. If $a(X)=0$, then we assume that $\nu$ does not induce the complex semi-orientation.
 Then
 \begin{equation*}
  \sum_{\eta\textrm{ real odd theta characteristic}}q_2(\eta-\nu)=2^{g-1}.
 \end{equation*}
\end{thm}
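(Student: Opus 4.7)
The plan is to parametrize real odd theta characteristics as $\eta=a+\nu$ with $a\in J_2(\R)$, extract the oddness condition via $(-1)^{\arf}$, and reduce the problem to \Cref{thm:signed2tor} together with one character sum on $J_2(\R)$ which I expect to vanish. Since $\nu$ is real, the map $\eta\mapsto a:=\eta-\nu$ is a bijection between real theta characteristics and $J_2(\R)$, so
\[
S:=\sum_{\eta\textrm{ real odd}}q_2(\eta-\nu) \;=\; \sum_{a\in J_2(\R),\,\arf(q_{a+\nu})=1} q_2(a).
\]
Using the indicator identity $\mathbf{1}[\arf=1]=(1-(-1)^{\arf})/2$ I would rewrite this as
\[
2S \;=\; \sum_{a\in J_2(\R)} q_2(a) \;-\; \sum_{a\in J_2(\R)} q_2(a)\cdot (-1)^{\arf(q_{a+\nu})}.
\]
By \Cref{thm:signed2tor} the first sum equals $2^g$, so everything reduces to showing that the second sum is zero.

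To handle the second sum I would write $\nu=\eta_0+d$ for some $d\in J_2$, so that $q_{a+\nu}=q_{a+d}$ in the notation of Section~6 and $\arf(q_{a+d})=q_0(a+d)=q_0(a)+q_0(d)+\langle a,d\rangle$. Since $q_2(a)=(-1)^{q_0(a)}$ for $a\in J_2(\R)$ (the identity noted between \Cref{eq:polar1} and \Cref{eq:polar2}), the $(-1)^{q_0(a)}$ factors cancel and the second sum becomes
\[
(-1)^{q_0(d)}\sum_{a\in J_2(\R)}(-1)^{\langle a,d\rangle}.
\]
The inner sum is a standard character sum on the elementary abelian group $J_2(\R)$: it equals $|J_2(\R)|$ if the linear functional $a\mapsto\langle a,d\rangle$ vanishes on $J_2(\R)$, and $0$ otherwise. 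So the theorem will follow once I show that this functional is nontrivial on $J_2(\R)$.

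The main (and really the only) nontrivial step is this nonvanishing. Expressing $d$ in the symplectic basis of \Cref{thm:basisj2}, a direct computation gives $\langle v_i,d\rangle=(d_l)_i$ for $i=1,\ldots,g$ and $\langle w_j,d\rangle=(d_u)_j$ for $j=1,\ldots,s$, so triviality on $J_2(\R)$ is equivalent to $d_l=0$ together with the vanishing of the first $s$ entries of $d_u$. If $a(X)=1$, the reality of $\nu$ and \Cref{cor:whenthetareal} force the last $g-s$ entries of $d_l$ to equal $1$; as $s<g$ in this case (since $s=g$ forces $a(X)=0$), this is incompatible with $d_l=0$. If $a(X)=0$, the triviality condition is compatible with reality, but the vanishing of the first $s$ entries of $d_u$ means $u_1=0$ in the notation of \Cref{thm:thetaa0}, so $[\nu]$ coincides with the semi-orientation induced by $\eta_0$, which by \Cref{cor:distinguished} is the complex semi-orientation, contradicting the standing hypothesis. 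In both cases the character sum vanishes, hence $2S=2^g$ and $S=2^{g-1}$.
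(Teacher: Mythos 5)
Your proof is correct, and it takes a genuinely different route from the paper. The paper splits the sum over $T_c=\{\eta-\nu\}$ into the part lying in $J_2^0(\R)$, which it counts separately (\Cref{lem:lagrarf} produces $2^{g-1}$ odd theta characteristics there, each contributing $+1$), and the complement, which it cancels by an explicit sign-reversing involution $b\mapsto b+f$ on each fiber $\{b_l=d\}$ with $d\neq 0$. You instead absorb the oddness condition into the weight $\tfrac{1}{2}\bigl(1-(-1)^{\arf}\bigr)$, so that by \Cref{thm:signed2tor} everything collapses to $\tfrac{1}{2}\bigl(2^g-(-1)^{q_0(d)}\sum_{a\in J_2(\R)}(-1)^{\langle a,d\rangle}\bigr)$, and the only remaining point is that the character $a\mapsto\langle a,d\rangle$ is nontrivial on $J_2(\R)$. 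Your verification of that nontriviality is right, and it is worth noting that it uses the hypotheses more faithfully than the paper's own argument: nontriviality only requires $d_l\neq 0$ \emph{or} that the first $s$ entries of $d_u$ be nonzero, and when $a(X)=0$ the semi-orientation hypothesis delivers exactly the latter (via the parametrization in \Cref{thm:thetaa0} and \Cref{cor:distinguished}). By contrast, the paper's cancellation and \Cref{lem:lagrarf} both hinge on $c_l\neq 0$ for $c=\nu-\eta_0$ (your $d$), which for $a(X)=0$ amounts to $\Par(\nu)\neq(1,\ldots,1)$ rather than to the stated condition on $[\nu]$; for instance $\nu=v_1+\eta_0$ is real, does not induce the complex semi-orientation, yet has $c_l=0$, and then no element of $J_2^0(\R)$ translates $\nu$ to an odd theta characteristic, so \Cref{lem:lagrarf} fails for it. Your character-sum argument handles that case correctly, so besides being shorter and more conceptual it actually repairs the $a(X)=0$ branch of the paper's proof.
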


\begin{proof}
 Let $c\in J_2$ such that $q_c=q_\nu$, i.e. $c=\nu-\eta_0$.  Our assumptions imply that $c_l\in\Z_2^g$ is not the zero vector. We have
 \begin{equation*}
  \{\eta-\nu\mid \eta\textrm{ real odd theta characteristic}\}=\{b\in J_2(\R)\mid \arf(q_{b+c})=1\}=:T_c.
 \end{equation*}
 Indeed, let $\eta$ a theta characteristic and $b=\eta-\nu$. 
 Since $\nu$ is real, we have that $\eta$ is real if and only if $b$ is real. Further $\eta$ is odd if and only if $\arf(q_{\eta})=1$ and we have $q_\eta=q_{b+c}$ since $\eta=b+c+\eta_0$. Thus we have to show that
 \begin{equation*}
  \sum_{b\in T_c} q_2(b)=2^{g-1}.
 \end{equation*}
 Since $q_2(b)=1$ for $b\in J_2^0(\R)$ it suffices by \Cref{lem:lagrarf} to group the elements of $T_c\setminus J_2^0(\R)$ into disjoint pairs $\{b,b'\}$ with $q_2(b)=-q_2(b')$. We proceed as in the proof of \Cref{thm:signed2tor}. The set $T_c\setminus J_2^0(\R)$ is a disjoint union of sets 
 \begin{equation*}
 T_{c,d}=\{b\in J_2(\R)\mid b_l=d,\, (c_u+b_u)^t(c_l+d)=1\} 
 \end{equation*}
 where $d\in\Z_2^g\setminus\{0\}$. For every such $d$ there exists a $0\neq f\in J_2^0(\R)$ with $f_u^t\cdot(c_l+d)=0$ and $f_u^t\cdot d=1$ because $c_l\neq0$.
 The $2$-element group $\{0,f\}$ then acts on $T_{c,d}$ by addition. Each orbit $\{b,b+f\}$ has length two and since $f\in J(\R)$, either both its elements are real or none is. In the former case we have $$q_2(b+f)=q_2(b)\cdot q_2(f)\cdot (-1)^{\langle b+f,b\rangle}=q_2(b)\cdot(-1)^{f_u^t d}=-q_2(b).$$This proves the claim.
\end{proof}

Since no real odd theta characteristic induces the complex semi-orientation, we obtain:
\begin{cor}
 Let $\nu$ a real odd theta characteristic on $X$. 
 Then
 \begin{equation*}
  \sum_{\eta\textrm{ real odd theta characteristic}}q_2(\eta-\nu)=2^{g-1}.
 \end{equation*}
\end{cor}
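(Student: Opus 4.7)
The plan is to deduce the corollary immediately from \Cref{thm:oddcount}. That theorem already gives the identity $\sum_\eta q_2(\eta-\nu)=2^{g-1}$ for any real theta characteristic $\nu$, subject only to the hypothesis that, in the case $a(X)=0$, the characteristic $\nu$ does not induce the complex semi-orientation. So all that remains to check is that this hypothesis is automatic once $\nu$ is assumed to be odd.

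First I would observe that the restriction on $\nu$ in \Cref{thm:oddcount} is vacuous in the case $a(X)=1$, so there is nothing to verify there. Second, in the case $a(X)=0$, I would invoke \Cref{rem:holonotcomplex}, which states that the complex semi-orientation is never induced by a real odd theta characteristic (because the complex semi-orientation cannot be represented by a regular definite differential). Hence whenever $\nu$ is a real odd theta characteristic, the semi-orientation $[\nu]$ differs from the complex one, and the hypothesis of \Cref{thm:oddcount} is satisfied.

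With these two observations in place, the identity
\begin{equation*}
 \sum_{\eta\textrm{ real odd theta characteristic}}q_2(\eta-\nu)=2^{g-1}
\end{equation*}
follows by direct application of \Cref{thm:oddcount} to our $\nu$. There is no real obstacle here: the corollary is a clean specialization of the preceding theorem, obtained by dropping an automatically-verified hypothesis. The only small point worth making explicit in the write-up is the case split on $a(X)$, together with a pointer to \Cref{rem:holonotcomplex} for the nontrivial case $a(X)=0$.
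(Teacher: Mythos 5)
Your proposal is correct and matches the paper's own argument: the corollary is stated in the paper immediately after \Cref{thm:oddcount} with exactly the justification that no real odd theta characteristic induces the complex semi-orientation (\Cref{rem:holonotcomplex}), so the hypothesis of the theorem is automatic. Nothing further is needed.
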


We give an interpretation of these signed counts in the next section.

\section{Contact hyperplanes to canonical curves}
In this section let $X\subset\pp^{g-1}$ always denote a non-hyperelliptic smooth geometrically irreducible projective real curve of genus $g$ with $X(\R)\neq\emptyset$ embedded via its canonical embedding. 
As in the previous section let $J$ be its Jacobian and $X_0,\ldots,X_s$ the  connected components of $X(\R)$.
In this situation odd theta characteristics can be interpreted as contact hyperplanes to $X$.

\begin{Def}
 A \emph{contact hyperplane} of $X$ is a hyperplane which interesects $X$ at every intersection point with even multiplicity. 
\end{Def}

In the following we assume that $X$ has only finitely many contact hyperplanes. This corresponds to the condition that $h^0(\nu)\in\{0,1\}$ for every theta characteristic $\nu$ of $X$ and is true for a general curve. In this case there is a natural bijection between (real) odd theta characteristics and (real) contact hyperplanes. Thus \Cref{thm:oddcount} gives a signed count of real contact hyperplanes which we will now interpret in terms of the extrinsic geometry of $X\subset\pp^{g-1}$. 

\begin{Def}
 A rational function $f$ on $X$ is called \emph{definite} if it has even order at all real points. 
 Such $f$ has constant sign on $X_i$ for $i=0,\ldots,s$ and we define $\sg_i(f)\in\Z_2$ such that $f$ has sign $(-1)^{\sg_i(f)}$ on $X_i$. Letting $\divv(f)=\sum_{P\in X}n_p\cdot P$ we further define $\Par_i(f):=\sum_{P\in X_i}\frac{n_P}{2}\in\Z_2$ for $i=0,\ldots,s$.
\end{Def}

\begin{rem}\label{rem:oldparnew}
 If $\divv(f)=2D$ and $c\in J_2(\R)$ is the $2$-torsion point represented by $D$, then we have $\sg_i(c)=\sg_0(f)+\sg_i(f)$ and $\Par_i(f)=\Par_i(c)$ for $i=0,\ldots,s$.
\end{rem}

\begin{Def}
 A \emph{weak contact hyperplane} of $X$ is a real hyperplane which interesects $X(\R)$ at every intersection point with even multiplicity.
\end{Def}

\begin{rem}
 Let $H_1,H_2$ two different weak contact hyperplanes of $X$. Then $\R\pp^{g-1}\setminus(H_1\cup H_2)$ has two connected components and each $X_i$ is contained in the closure of one such connected component.
\end{rem}

When we count contact points of a (weak) contact hyperplane $H$ with certain properties, we always count with multiplicities: If $H$ interesects $X$ in a point $P$ with multiplicity $2k$, we count $P$ as $k$-fold contact point of $H$ with $X$.

\begin{Def}\label{def:signofpair}
 Let $H_1,H_2$ two different weak contact hyperplanes of $X$ and let $A$ be the  connected component of $\R\pp^{g-1}\setminus(H_1\cup H_2)$ whose closure does not contain $X_0$. 
 Let $n$ be the number of real contact points to $H_1$ and $H_2$ that lie on a connected component of $X(\R)$ which is contained in the euclidean closure of $A$.
 We define the \emph{type} of the pair of weak contact hyperplanes $H_1$ and $H_2$ as 
 \begin{equation*}
  \textrm{Type}_{X_0}(H_1,H_2)=(-1)^{n}.
 \end{equation*}
 If $H_1=H_2$, then we let $\textrm{Type}_{X_0}(H_1,H_2)=1$.
\end{Def}

\begin{figure}[ht]
 \includegraphics[width=7cm]{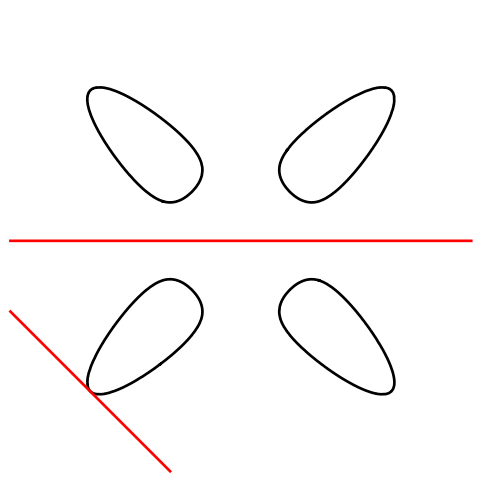}
\caption{The red lines are two weak contact hyperplanes of a genus three curve canonically embedded as a plane quartic. If $X_0$ is one of the upper two ovals, then of the type of this pair of lines is $-1$ and otherwise $+1$.}
\label{fig:weak}
\end{figure}

\begin{lem}\label{lem:tranl1}
 Let $H_1,H_2$ two different weak contact hyperplanes of $X$ cut out by linear forms $L_1$ and $L_2$. Letting $f=\frac{L_1}{L_2}$ we have $\textrm{Type}_{X_0}(H_1,H_2)=(-1)^{n}$ where
 \begin{equation*}
  n=\sg_0(f)\cdot \sum_{i=0}^s \Par_i(f)+\sum_{i=0}^s \sg_i(f)\cdot\Par_i(f)\in \Z_2.
 \end{equation*}
 If the total number of real contact points to $H_1$ and $H_2$ is even, then 
 \begin{equation*}
  n=\sum_{i=0}^s \sg_i(f)\cdot\Par_i(f)\in \Z_2.
 \end{equation*}
\end{lem}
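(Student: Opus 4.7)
The starting observation is that $f = L_1/L_2$ is a definite rational function on $X$. Indeed, because $H_1$ and $H_2$ are weak contact hyperplanes, both $L_1$ and $L_2$ vanish to even order at every real point of $X$, and hence so does $f$. Thus $\sg_i(f)$ and $\Par_i(f)$ are well-defined. Writing $\divv(f) = \sum n_P\cdot P$, the real contact points of $H_1\cup H_2$ with $X$ that lie on $X_i$ are exactly the points $P\in X_i$ with $n_P\neq 0$, each contributing $|n_P|/2$ to the contact-point count (per the convention fixed before \Cref{def:signofpair}). Since $|n_P|/2 \equiv n_P/2 \pmod 2$, the number of real contact points on $X_i$ agrees with $\Par_i(f)$ modulo $2$.

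The key topological step is to understand $\R\pp^{g-1}\setminus(H_1\cup H_2)$. Since $H_1\neq H_2$, passing to the sphere double cover of $\R\pp^{g-1}$ and dividing out the antipodal action shows that this complement consists of exactly two connected components, separated by the sign of $f$. The component whose closure contains $X_0$ is the one on which $f$ has sign $(-1)^{\sg_0(f)}$; by definition $A$ is the other component. Because $f$ is definite it has constant sign on each $X_i$, so $\overline{A}$ contains $X_i$ if and only if $f$ takes the opposite sign on $X_i$ from the one it has on $X_0$, that is, if and only if $\sg_i(f) + \sg_0(f) \equiv 1 \pmod 2$.

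Combining these two ingredients, the exponent $n$ defining $\textrm{Type}_{X_0}(H_1,H_2)$ satisfies
\begin{equation*}
n \;\equiv\; \sum_{i=0}^s \mathbf{1}[\sg_0(f)+\sg_i(f)=1]\cdot\Par_i(f) \;\equiv\; \sum_{i=0}^s (\sg_0(f)+\sg_i(f))\cdot\Par_i(f)\pmod 2,
\end{equation*}
where in the last step I used that an indicator of a condition of the form ``$x=1$'' on $x\in\Z_2$ simply equals $x$. Expanding the sum gives the first claimed formula; note that the $i=0$ contribution vanishes automatically, consistently with $X_0\not\subset\overline{A}$. The second formula follows immediately: the total number of real contact points of $H_1\cup H_2$ with $X$ is $\sum_{i=0}^s \Par_i(f)$ modulo $2$, so when this is even the first summand $\sg_0(f)\cdot\sum_{i=0}^s\Par_i(f)$ drops out.

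The only genuine subtlety is the topological identification of $A$ as the $f$-sign-component opposite to $X_0$; once this and the multiplicity convention are in hand, the rest is direct bookkeeping in $\Z_2$.
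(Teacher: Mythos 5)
Your proof is correct and follows essentially the same route as the paper: the paper normalizes $f$ to $\tilde f=(-1)^{\sg_0(f)}f$ so that $\sg_i(\tilde f)$ is the indicator of $X_i\subset\overline{A}$ and then substitutes $\sg_i(f)=\sg_0(f)+\sg_i(\tilde f)$, which is exactly your bookkeeping written slightly differently. One small imprecision: the real contact points on $X_i$ are not literally the points with $n_P\neq0$ contributing $|n_P|/2$ each (a common contact point of $H_1$ and $H_2$ can have $n_P=0$), but since $\ord_P(L_2)$ is even the count agrees with $\Par_i(f)$ modulo $2$, which is all you use.
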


\begin{proof}
 Let $\tilde{f}=(-1)^{\sg_0(f)}\cdot f$. Then $\sg_i(\tilde{f})=1$ if $X_i$ is contained in the euclidean closure of $A$ and $\sg_i(\tilde{f})=0$ otherwise. Thus the number of real contact points to $H_1$ and $H_2$ that lie on a connected component of $X(\R)$ which is contained in the euclidean closure of $A$ equals
 \begin{equation*}
  \sum_{i=0}^s \sg_i(\tilde{f})\cdot\Par_i(f)
 \end{equation*}
modulo $2$. Now the first claim follows since $\sg_i(f)=\sg_0(f)+\sg_i(\tilde{f})$. If the total number of real contact points to $H_1$ and $H_2$ is even, then $\sum_{i=0}^s\Par_i(f)=0$ which shows the second claim.
\end{proof}

\begin{thm}\label{thm:cont1}
 Let $H_\infty$ a weak contact hyperplane. Then
 \begin{eqnarray*}
  \sum_{H\textrm{ real contact hyperplane}}\textnormal{Type}_{X_0}(H,H_\infty)=2^{g-1}.
 \end{eqnarray*}
\end{thm}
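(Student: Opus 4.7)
The plan is to reduce Theorem~\ref{thm:cont1} to Theorem~\ref{thm:oddcount} via an explicit character sum on $\Z_2^s\times\Z_2^s$, using the counting of real odd theta characteristics from Section~\ref{sec:theta}.

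Let $L$ and $L_\infty$ be real linear forms cutting out $H$ and $H_\infty$. Since both vanish to even order at every real point of $X$, the quantities $\Par_i(L):=\sum_{P\in X_i}\tfrac12\ord_P(L)\in\Z_2$ and $\tau_i(L):=\sg_0(L)+\sg_i(L)\in\Z_2$ are well-defined, and analogously for $L_\infty$. For $f=L/L_\infty$ the obvious additivity gives $\Par_i(f)\equiv\Par_i(L)+\Par_i(L_\infty)$ and $\sg_i(f)\equiv\sg_i(L)+\sg_i(L_\infty)\pmod 2$. Expanding the first formula of Lemma~\ref{lem:tranl1} and cancelling $2\sg_0(f)\Par_0(f)$ modulo~$2$ yields
\[
n\equiv \sum_{i=1}^{s}\bigl(\tau_i(L)+\tau_i(L_\infty)\bigr)\bigl(\Par_i(L)+\Par_i(L_\infty)\bigr)\pmod 2.
\]
If $H$ corresponds to the real odd theta characteristic $\eta$, then $(\tau_i(L))_{i=1}^s\in\Z_2^s$ encodes the semi-orientation $[\eta]$ (semi-orientations being sign patterns on $X_0,\ldots,X_s$ modulo a global flip, and in bijection with $\Z_2^s$ via $\tau$), and $(\Par_i(L))_i=\Par(\eta)$. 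Thus $\textnormal{Type}_{X_0}(H,H_\infty)=(-1)^n$ depends on $H$ only through the pair $(\tau([\eta]),\Par(\eta))\in\Z_2^s\times\Z_2^s$.

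Set $T=\tau(L_\infty)$, $\Pi=\Par(L_\infty)$, and let $N(\tau,P)$ denote the number of real odd theta characteristics $\eta$ with $(\tau([\eta]),\Par(\eta))=(\tau,P)$. The sum in question becomes
\[
\sum_{(\tau,P)\in\Z_2^s\times\Z_2^s}N(\tau,P)\cdot(-1)^{\langle\tau+T,\,P+\Pi\rangle}.
\]
For $a(X)=1$, Theorem~\ref{thm:thetaa1} gives $N(\tau,P)=2^{g-s-1}$ uniformly, and character orthogonality immediately returns $2^{g-s-1}\cdot 2^s=2^{g-1}$. For $a(X)=0$, Theorem~\ref{thm:thetaa0} shows that $N(\tau,P)=2^{g-s}$ exactly on the affine subset $\{\langle\mathbf{1}+P,\tau+\tau_c\rangle=1\}$, where $\tau_c\in\Z_2^s$ encodes the complex semi-orientation, and vanishes elsewhere. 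Substituting $u=\tau+\tau_c$, $v=P+\mathbf{1}$, $a=\tau_c+T$, $b=\mathbf{1}+\Pi$, a short splitting of $\sum_{\langle u,v\rangle=1}(-1)^{\langle u+a,\,v+b\rangle}$ into its unconstrained piece and a signed analogue (the latter localising to $v=b$) shows that this quantity equals $2^{s-1}$ whenever $(a,b)\neq(0,0)$, giving again $2^{g-s}\cdot 2^{s-1}=2^{g-1}$.

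The main obstacle is the non-degeneracy condition $a\neq 0$ in the dividing case $a(X)=0$. This is precisely Remark~\ref{rem:holonotcomplex}: the restriction of $L_\infty$ to $X$ is a regular definite differential, so its induced semi-orientation $[L_\infty]$ is not the complex one, forcing $T\neq\tau_c$ and hence $a\neq 0$. The remainder of the argument is a direct combination of Lemma~\ref{lem:tranl1} and the counting results of Section~\ref{sec:theta}.
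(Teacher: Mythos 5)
Your proposal is correct, but it takes a genuinely different route from the paper's. Both arguments begin with \Cref{lem:tranl1} (and your reduction of its first formula to $n\equiv\sum_{i=1}^s\tau_i(f)\Par_i(f)$ after cancelling the $i=0$ term is exactly right), and both invoke \Cref{rem:holonotcomplex} to supply the needed nondegeneracy in the dividing case; they diverge after that. The paper uses \Cref{thm:thetaa1} and \Cref{thm:thetaa0} only to produce one auxiliary real theta characteristic $\nu$ with $[\nu]=[L_\infty]$ and $\Par(\nu)=\Par(L_\infty)$, proves $\textnormal{Type}_{X_0}(H,H_\infty)=q_2(\eta-\nu)$ via \Cref{prop:pairingint}, and then quotes \Cref{thm:oddcount}. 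You instead note that $\textnormal{Type}_{X_0}(H,H_\infty)$ factors through the pair $(\tau([\eta]),\Par(\eta))\in\Z_2^s\times\Z_2^s$, extract from \Cref{thm:thetaa1} and \Cref{thm:thetaa0} the full distribution $N(\tau,P)$ of real odd theta characteristics over such pairs, and finish by character orthogonality on $\Z_2^s\times\Z_2^s$ --- so despite your opening sentence, \Cref{thm:oddcount} is never actually used, and your argument in effect reproves it in this geometric setting. What each buys: your route makes the combinatorics fully explicit (and recovers the Gross--Harris counts of real odd theta characteristics as a byproduct), while the paper's route is shorter given \Cref{thm:oddcount} and identifies the geometric type with the intrinsic invariant $q_2(\eta-\nu)$, which is the conceptual point of the section. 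Two cosmetic blemishes, neither a gap: the ``signed analogue'' in the dividing case factors as $(-1)^{\langle a,b\rangle}\bigl(\sum_u(-1)^{\langle u,b\rangle}\bigr)\bigl(\sum_v(-1)^{\langle a,v\rangle}\bigr)$ and so vanishes unless $a=0$ and $b=0$ simultaneously (it does not ``localise to $v=b$''), which still yields $2^{s-1}$ under your hypothesis $a\neq0$; and the individual signs $\sg_i(L)$ of a linear form are only defined up to a global flip, so $\tau_i(L)$ must be read as the sign pattern of the semi-orientation $[L]=[\eta]$, exactly as your identification with $\tau([\eta])$ already stipulates.
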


\begin{proof}
 Let $L_\infty$ a linear form that cuts out the hyperplane $H_\infty$. Since $X$ is canonically embedded, we can think $L_\infty$ as a regular definite differential on $X$. We let $\Omega=[L_\infty]$ the corresponding semi-orientation. If $a(X)=0$, then $\Omega$ is not the complex semi-orientation  by \Cref{rem:holonotcomplex}. By \Cref{thm:thetaa1} and \Cref{thm:thetaa0} there is a (possibly even) real theta characteristic $\nu$ on $X$ such that $[\nu]=\Omega$ and $\Par_i(\nu)$ is congruent modulo $2$ to the number of contact points of $H_\infty$ on $X_i$ for $i=1,\ldots,s$. We claim that for every real contact hyperplane $H$ of $X$ we have $\textnormal{Type}_{X_0}(H,H_\infty)=q_2(\eta-\nu)$ where $\eta$ is the real odd theta characteristic corresponding to $H$. This will imply the statement by \Cref{thm:oddcount}.
 
 Let $L_0$ be a rational differential on $X$ whose divisor is $2D_0$ where $D_0$ is a divisor representing $\nu$. After replacing $L_0$ by $-L_0$ if necessary, there is a nonnegative rational function $g$ on $X$ such that $L_0=g\cdot L_\infty$ since $[L_0]=[L_\infty]$. By our choice of $\nu$, we have $\Par_i(g)=0$ for $i=1,\ldots,s$. Let $H$ be cut out by the linear form $L$. After replacing $L$ by $-L$ if necessary, we can assume that $\frac{L}{L_\infty}$ is nonnegative on $X_0$. Then by \Cref{lem:tranl1} we have $\textnormal{Type}_{X_0}(H,H_\infty)=(-1)^n$ where
 \begin{eqnarray*}
  n&=&\sum_{i=1}^s \sg_i\left(\frac{L}{L_\infty}\right)\cdot\Par_i\left(\frac{L}{L_\infty}\right)\\
  &=&\sum_{i=1}^s \sg_i\left(g\cdot\frac{L}{L_0}\right)\cdot\Par_i\left(g\cdot\frac{L}{L_0}\right)\\
  &=&\sum_{i=1}^s \sg_i\left(\frac{L}{L_0}\right)\cdot\Par_i\left(\frac{L}{L_0}\right)\\
  &=&\sum_{i=1}^s \sg_i\left(\eta-\nu\right)\cdot\Par_i\left(\eta-\nu\right)\\
  &=&q_2(\eta-\nu).
 \end{eqnarray*}
 Here the third equality holds because $g$ is nonnegative on $X(\R)$ and $\Par_i(g)=0$ for $i=1,\ldots,s$. The fourth equality follows from \Cref{rem:oldparnew} and the last one is \Cref{prop:pairingint}.
\end{proof}

\begin{rem}\label{rem:choiceA}
If in the situation of \Cref{def:signofpair} the total number of real contact points to $H_1$ and $H_2$ is even, then $\textrm{Type}_{X_0}(H_1,H_2)$ does not depend on the choice of $X_0$ and we write $\textrm{Type}(H_1,H_2):=\textrm{Type}_{X_0}(H_1,H_2)$. This is for instance the case when both $H_1$ and $H_2$ are contact hyperplanes: There are $g-1$ contact points to each of the $H_i$ and thus there are in total $2(g-1)$ contact points to $H_1$ and $H_2$. Since non-real contact points come in complex conjugate pairs, the total number of real contact points to $H_1$ and $H_2$ is even. 
\end{rem}

\begin{figure}[ht]
 \includegraphics[width=7cm]{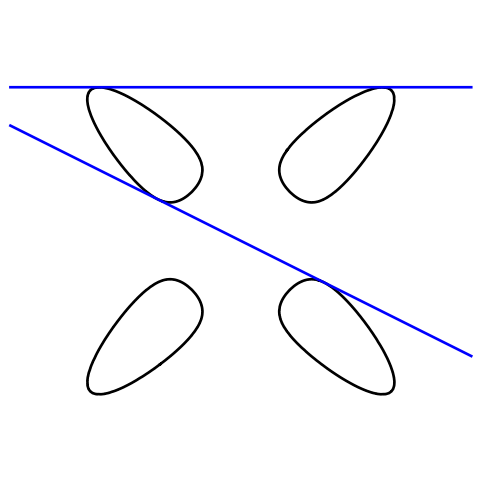}
\caption{The blue lines are bitangents of a genus three curve canonically embedded as a plane quartic. The type of this pair of lines is $-1$, independent of the choice of $X_0$.}
\label{fig:bit1}
\end{figure}

\begin{cor}\label{cor:cont2}
 Fix a real contact hyperplane $H_0$ of $X$. Then
 \begin{eqnarray*}
  \sum_{H\textrm{ real contact hyperplane}}\textnormal{Type}(H,H_0)=2^{g-1}.
 \end{eqnarray*}
\end{cor}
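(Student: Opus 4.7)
The plan is to derive this directly from \Cref{thm:cont1} by observing that the hypothesis has been strengthened: we now assume $H_0$ is a genuine real contact hyperplane, not merely a weak one. Every contact hyperplane is automatically a weak contact hyperplane, so \Cref{thm:cont1} applies with $H_\infty = H_0$, giving
\begin{equation*}
 \sum_{H\textrm{ real contact hyperplane}}\textnormal{Type}_{X_0}(H,H_0) = 2^{g-1}
\end{equation*}
for any fixed choice of component $X_0$ of $X(\R)$.

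The only thing left to verify is that $\textnormal{Type}_{X_0}(H,H_0)$ equals $\textnormal{Type}(H,H_0)$, i.e. is independent of the choice of $X_0$, so that the sum on the left-hand side can be written without reference to $X_0$. This is exactly the content of \Cref{rem:choiceA}: for a pair of contact hyperplanes $H$ and $H_0$ the total number of contact points is $2(g-1)$, hence the number of real contact points has the same parity as the number of non-real ones, which is even because non-real contact points occur in complex conjugate pairs. Therefore, the total number of real contact points to $H$ and $H_0$ together is even, and \Cref{rem:choiceA} applies to yield $\textnormal{Type}_{X_0}(H,H_0) = \textnormal{Type}(H,H_0)$.

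Combining these two observations gives the desired identity. There is no real obstacle here; the corollary is a clean specialization of \Cref{thm:cont1} once one notes that restricting from weak contact hyperplanes $H_\infty$ to actual contact hyperplanes $H_0$ removes the $X_0$-dependence of the summands.
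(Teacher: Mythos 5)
Your proof is correct and is exactly the derivation the paper intends: the corollary is stated without proof as an immediate consequence of \Cref{thm:cont1} (applicable since a contact hyperplane is in particular a weak contact hyperplane) combined with \Cref{rem:choiceA}, which guarantees the independence of the type from the choice of $X_0$ when both hyperplanes are contact hyperplanes.
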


Another special case of interest arises from fixing a real hyperplane $H_\infty$ which does not interesect $X$ in real points. By definition $H_\infty$ is a weak contact hyperplane. This corresponds to the choice of an affine chart of real projective space in which $X(\R)$ is compact. We identify $\R^{g-1}=\R\pp^{g-1}\setminus H_\infty$. 
If $H$ is another weak contact hyperplane of $X$ and $A$ the connected component of $\R^{g-1}\setminus H$ whose closure does not contain $X_0$, then 
 \begin{equation*}
  \textrm{Type}_{X_0}(H,H_\infty)=(-1)^{n}
 \end{equation*}
 where $n$ be the number of real contact points to $H$ that lie on a connected component of $X(\R)$ which is contained in the euclidean closure of $A$.

\begin{figure}[ht]
 \includegraphics[width=5cm]{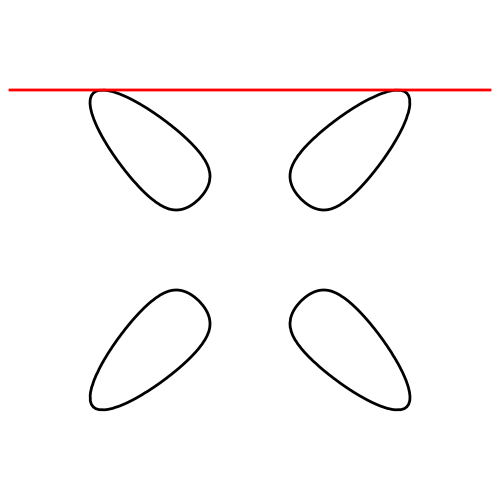} \quad
 \includegraphics[width=5cm]{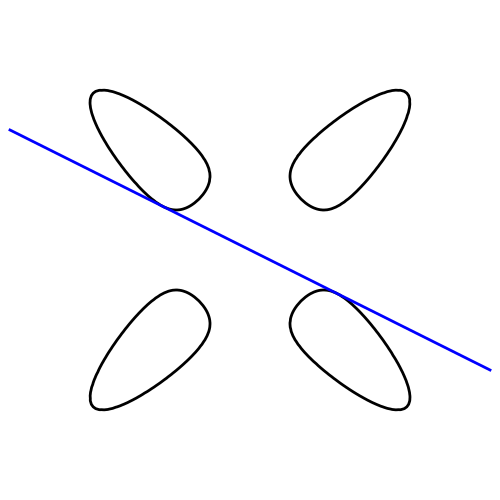}
\caption{In this affine chart the type of the displayed bitangents is $+1$ (left) and $-1$ (right), independent of the choice of $X_0$.}
\label{fig:bitaff}
\end{figure}
 
\begin{figure}[ht]
 \includegraphics[width=5cm]{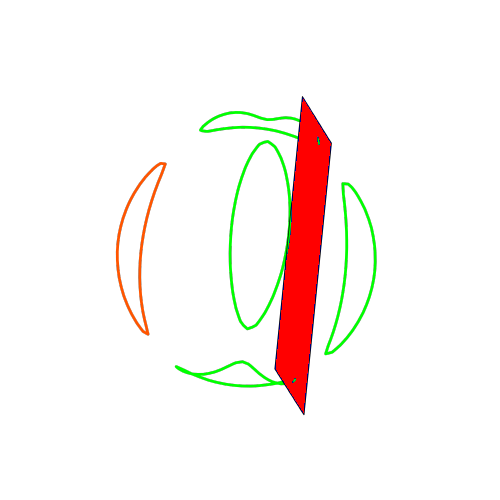} \quad
 \includegraphics[width=4cm]{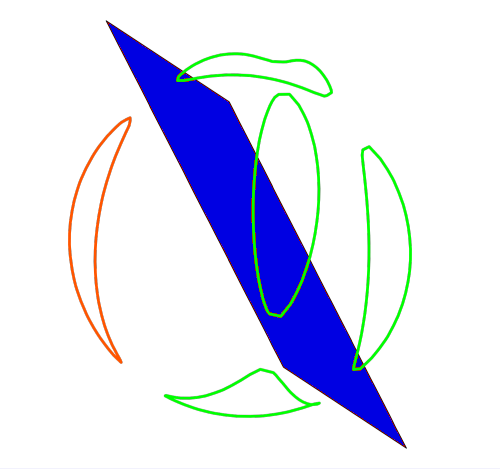}
\caption{Both figures display a genus four curve canonically embedded as a space sextic. The orange oval is $X_0$.
In this affine chart the type of the displayed tritangent planes is $+1$ (left) and $-1$ (right).}
\label{fig:tritaff}
\end{figure}

\begin{cor}\label{cor:cont3}
 Let $H_\infty$ a real hyperplane which does not interesect $X$ in real points. Then
 \begin{eqnarray*}
  \sum_{H\textrm{ real contact hyperplane}}\textnormal{Type}_{X_0}(H,H_\infty)=2^{g-1}.
 \end{eqnarray*}
\end{cor}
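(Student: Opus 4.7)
The plan is to deduce this as a direct specialization of \Cref{thm:cont1}. The corollary only adds to the hypothesis of \Cref{thm:cont1} the assumption that $H_\infty$ avoids $X(\R)$ altogether, so the entire content of the proof is to verify two straightforward compatibilities and then invoke \Cref{thm:cont1} verbatim.

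First, I would observe that a real hyperplane $H_\infty$ with $H_\infty \cap X(\R) = \emptyset$ is vacuously a weak contact hyperplane in the sense of the section: the condition ``every real intersection point is met with even multiplicity'' is satisfied trivially when there are no real intersection points at all. This is already noted in the paragraph preceding the corollary, so \Cref{thm:cont1} is applicable with this choice of $H_\infty$.

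Second, I would verify that the simplified description of $\textnormal{Type}_{X_0}(H,H_\infty)$ given just before the corollary matches \Cref{def:signofpair}. In the affine chart $\R^{g-1} = \R\pp^{g-1} \setminus H_\infty$, the two connected components of $\R^{g-1} \setminus H$ agree with the two connected components of $\R\pp^{g-1} \setminus (H \cup H_\infty)$, because removing the projective hyperplane $H_\infty$ produces exactly this affine chart and an affine hyperplane separates $\R^{g-1}$ into two pieces. Moreover, the count of real contact points of both $H$ and $H_\infty$ lying in the relevant component $A$ reduces to the count of real contact points of $H$ alone in $A$, since $H_\infty$ contributes none by hypothesis. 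Under these identifications the two notions of type coincide, and the equality $\sum_H \textnormal{Type}_{X_0}(H,H_\infty) = 2^{g-1}$ follows immediately from \Cref{thm:cont1}. There is no essential obstacle here; the only subtle point is the passage from the projective to the affine picture, which is harmless once one notes the bijection between the two sets of components.
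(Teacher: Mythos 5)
Your proposal is correct and matches the paper's treatment: the paper gives no separate proof of \Cref{cor:cont3}, simply noting that a real hyperplane missing $X(\R)$ is by definition a weak contact hyperplane and that the type then admits the simplified affine-chart description, so the statement is an immediate specialization of \Cref{thm:cont1}. Your two verifications (vacuous weak-contact condition and the identification of components of $\R\pp^{g-1}\setminus(H\cup H_\infty)$ with those of $\R^{g-1}\setminus H$) are exactly the content the paper leaves implicit.
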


If the genus of $X$ is odd and $H$ is a contact hyperplane, then $\textrm{Type}_{X_0}(H,H_\infty)$ does not depend on the choice of $X_0$ by \Cref{rem:choiceA}. The case $g=3$ of the following corollary is precisely \cite[Theorem~1]{larsonvogt}.

\begin{cor}\label{cor:affineodd}
 Let $g$ be odd and let $H_\infty$ a real hyperplane which does not interesect $X$ in real points. Then
 \begin{eqnarray*}
  \sum_{H\textrm{ real contact hyperplane}}\textnormal{Type}(H,H_\infty)=2^{g-1}.
 \end{eqnarray*}
\end{cor}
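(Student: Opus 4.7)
The plan is to deduce this directly from \Cref{cor:cont3} by showing that under the hypothesis that $g$ is odd, the sign $\textrm{Type}_{X_0}(H,H_\infty)$ is actually independent of the choice of $X_0$ for every real contact hyperplane $H$, so that it may unambiguously be written as $\textrm{Type}(H,H_\infty)$.

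To verify independence of $X_0$, I would invoke \Cref{rem:choiceA}: it suffices to check that the total number of real contact points of $H$ and $H_\infty$ with $X$ is even. By hypothesis $H_\infty$ does not meet $X$ in real points, so it contributes $0$ real contact points. On the other hand, $H$ is a contact hyperplane, hence has exactly $g-1$ contact points with $X$ (counted with the multiplicities used throughout this section). Since the non-real contact points of $H$ come in complex conjugate pairs, the number of real contact points of $H$ has the same parity as $g-1$. Because $g$ is odd, $g-1$ is even, so $H$ contributes an even number of real contact points. Therefore the total number of real contact points of the pair $(H,H_\infty)$ is even, and \Cref{rem:choiceA} applies.

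Having established this, for every real contact hyperplane $H$ we have $\textrm{Type}_{X_0}(H,H_\infty)=\textrm{Type}(H,H_\infty)$, and the desired identity
\[
 \sum_{H\textrm{ real contact hyperplane}}\textrm{Type}(H,H_\infty)=2^{g-1}
\]
is exactly the statement of \Cref{cor:cont3}. There is no substantial obstacle: the entire content is the parity observation that makes the sign well-defined independently of the choice of distinguished component $X_0$, which in turn is a simple consequence of the parity of $g-1$ combined with complex conjugation acting on non-real contact points.
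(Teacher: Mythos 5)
Your proposal is correct and follows exactly the paper's (very brief) argument: the paper likewise deduces the corollary from \Cref{cor:cont3} by noting, via \Cref{rem:choiceA}, that for odd $g$ the total number of real contact points of $H$ and $H_\infty$ is even ($H_\infty$ contributes none, and $H$ contributes a number of the same parity as $g-1$ since non-real contact points pair off under conjugation), so $\textnormal{Type}_{X_0}(H,H_\infty)$ is independent of $X_0$.
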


\section{A conjectural arithmetic count}\label{sec:arith}

\subsection{The $\A^1$-degree}
We first recall some preliminaries from $\A^1$-enumerative geometry, mostly following the exposition of \cite[\S8]{pauliwickelgren}. Let $k$ always denote a field.
Recall that the \emph{Grothendieck--Witt group} $\GW(k)$ of $k$ is the Grothendieck group of the monoid of isometry classes of nonsingular quadratic spaces with addition given by orthogonal sum. The element of $\GW(k)$ defined by the quadratic form
\begin{equation*}
 k\to k,\, x\mapsto ax^2
\end{equation*}
for $a\in k^\times$ is denoted by $\langle a\rangle$. 
We consider $k^{\times}/ (k^\times)^2$ as a subset of $\GW(k)$ via the injection that sends the square class of $a\in k^{\times}$ to $\langle a\rangle$.
For a finite separable field extension $K/k$ there is a natural group homomorphism
\begin{equation*}
 \Tr_{K/k}\colon \GW(K)\to\GW(k)
\end{equation*}
defined by composing a quadratic form $V\to K$ on a $K$-vector space $V$ by the field trace $\tr_{K/k}\colon K\to k$.

\begin{Def}[Def.~7 in \cite{pauliwickelgren}]
Let $f\colon X\to Y$ be a finite surjective morphism of non-singular $k$-varieties. A \emph{relative orientation} of $f$ is an isomorphism $$L\otimes L\to\shHom(\det\cT_X, f^*\det\cT_Y)$$ where $L$ is a line bundle on $X$ and $\cT_X$ resp. $\cT_Y$ is the tangent bundle on $X$ and $Y$ respectively.
\end{Def}

Let $f\colon X\to Y$ be a finite surjective morphism of non-singular $K$-varieties, relatively orientated by the isomorphism $\psi\colon L\otimes L\to\shHom(\det\cT_X, f^*\det\cT_Y)$. Let $x\in X$ a closed point outside the ramification locus of $f$ such that $\kappa(x)/k$ is separable and let $y=f(x)\in Y$. We consider the induced map on tangent spaces $$\T_xf\colon \T_xX\to \T_yY\otimes_{\kappa(y)}\kappa(x).$$ After choosing bases that are compatible in the sense that the corresponding element of the fiber of $\shHom(\det\cT_X, f^*\det\cT_Y)$ at $x$ is the image of a square under $\psi$ \cite[Def.~8]{pauliwickelgren}, we take the determinant $\J_xf$ of $\T_xf$. It is straight-forward to check that the class $\langle \J_xf\rangle\in\GW(\kappa(x))$ does not depend on the chosen compatible bases. Then one defines the \emph{local $\A^1$-degree  of $f$ at $x$ with respect to the relative orientation $\psi$} as
\begin{equation*}
 \deg_x^{\A^1}(f,\psi):=\Tr_{\kappa(x)/\kappa(y)}\langle \J_xf\rangle\in\GW(\kappa(y)).
\end{equation*}
In our notation we keep track of the relative orientation because later on we will consider the degree of the same morphism with respect to different relative orientations.

\subsection{Definition of $b_2$ in terms of the $\A^1$-degree}\label{sec:orienta1}
We consider the identity map
\begin{equation*}
 \id\colon A\to A.
\end{equation*}
A relative orientation of $\id$ is an isomorphism $\psi\colon L\otimes L\to\cO_A$ for some $2$-torsion line bundle $L$ on $A$. Such an isomorphism is given by a rational function $f$ on $A$ such that $\divv(f)=2D$ where $D$ is a divisor whose class corresponds to the line bundle $L$. The local degree of $\id$ at a closed point $x\in A$ not in the support of $D$ with respect to this relative orientation is then just given by 
\begin{equation*}
 \deg_x^{\A^1}(\id,\psi)=\langle {f}(x)\rangle\in\GW(\kappa(x)).
\end{equation*}
We say that a relative orientation
\begin{equation*}
 \psi\colon L\otimes L\to\cO_A
\end{equation*}
is \emph{induced by $a'\in A^\vee_2(k)$} if $L$ is the line bundle corresponding to $a'$ and further $\deg_{0}^{\A^1}(\id,\psi)=\langle1\rangle$. If two relative orientations $\psi$ and $\psi'$ are induced by $a'$, then $\deg_{a}^{\A^1}(\id,\psi)=\deg_{a}^{\A^1}(\id,\psi')$ at all closed points $a\in A$ and we write
\begin{equation*}
 \deg_{a}^{\A^1}(\id,a'):=\deg_{a}^{\A^1}(\id,\psi).
\end{equation*}
With this notation, for all $a\in A_2(k)$ and $a'\in A^\vee_2(k)$, we have
 \begin{equation*}
  \deg_{a}^{\A^1}(\id,a')=b_2(a,a').
 \end{equation*}
 
\subsection{A conjectural arithmetic count} 
We make the following conjecture.

\begin{con}\label{con:main}
 Let $A$ be an abelian variety of dimension $g$ over a field $k$ of $\Char(k)\neq2$. Let $\lambda\colon A\to A^\vee$ a principal polarization. Then we have
 \begin{equation*}
  \sum_{a\in A_2}\Tr_{\kappa(a)/k}(\deg_{a}^{\A^1}(\id_{\kappa(a)},\lambda(a))) =2^{g-1}\cdot((2^g+1)\cdot\langle1\rangle+(2^g-1)\cdot\langle-1\rangle)
 \end{equation*}
 where $\id_{\kappa(a)}$ denotes the base-change of $\id\colon A\to A$ to $\kappa(a)$.
\end{con}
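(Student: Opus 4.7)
The equality predicted by the conjecture lives in $\GW(k)$, so the first step is to verify the low-level invariants. The rank of the LHS is $\sum_{a\in A_2}[\kappa(a):k]=|A_2(\overline{k})|=2^{2g}$, which matches the rank $2^{g-1}((2^g+1)+(2^g-1))=2^{2g}$ of the RHS; when $k$ admits a real embedding, the real signatures also agree by \Cref{thm:signed2tor}, since embeddings of $\kappa(a)$ into $\C$ not landing in $\R$ contribute hyperbolic planes (which have zero signature), and the remaining terms sum to the signed count $2^g=2^{g-1}(2^g+1)-2^{g-1}(2^g-1)$. Moreover, the integers $2^{g-1}(2^g\pm 1)$ are precisely the numbers of even and odd theta characteristics on a principally polarized abelian variety of dimension $g$, so the conjecture is refining this classical parity count from $\Z$ into an identity in $\GW(k)$.

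\textbf{Proof plan.} The plan is to adapt the proof of \Cref{thm:signed2tor} to an arbitrary field. Recall that the real case rested on two ingredients: \Cref{lem:lagr}, which showed that $q_2$ is trivial on the Lagrangian subspace $A^0_2(\R)\subset A_2(\R)$, and an orbit-pairing argument using translations by elements of $A^0_2(\R)$ that cancel $q_2$ via \Cref{thm:q2prop}. For a general $k$, the target is to exhibit a $k$-rational isotropic subscheme $L\subset A_2$ on which the trace of $q_2$ equals $(\mathrm{rank}\,L)\cdot\langle 1\rangle$, together with a Galois-equivariant family of translations pairing the remaining Galois orbits in $A_2\setminus L$ so that each paired contribution in $\GW(k)$ becomes a hyperbolic plane $\langle c\rangle+\langle -c\rangle=\langle 1\rangle+\langle -1\rangle$. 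Concretely, by \Cref{thm:q2prop}, translation by $d\in A_2(\overline{k})$ with $e_2(a,\lambda(d))=-1$ sends $q_2(a)$ to $-q_2(a)\,q_2(d)$, so finding a rational $d$ with $q_2(d)$ a square would yield the desired cancellation. The explicit computations available via \Cref{thm:curvepairing} in the Jacobian case should be used as a testing ground to guide the general argument.

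\textbf{Main obstacle.} The chief obstruction is the absence of a canonical $k$-rational Lagrangian in $A_2$ over a general field: over $\R$ the topological identity component $A^0_2(\R)$ supplied it for free, but no analogue exists over $\Q_p$ or $\Q$. A natural workaround is to prove the conjecture after base change to every completion of $k$ and invoke a Hasse-principle argument for $\GW(k)$, reducing to (real, $p$-adic, or finite) fields where more structure is available; this is where I expect the main difficulty to lie, since controlling the Hasse--Witt and higher cohomological invariants of the trace form requires genuine arithmetic input beyond the combinatorial pairing of orbits. An alternative is to match the identity inductively along the fundamental-ideal filtration $\GW(k)\supset I\supset I^2\supset\cdots$, checking rank, discriminant, Hasse--Witt, and higher classes successively and relating the $n$-th class to cohomological invariants of the theta characteristic parity. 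Reducing to Jacobians through \Cref{thm:curvepairing} is attractive, but it is not direct: not every principally polarized abelian variety is a Jacobian, and existing embeddings into Jacobians (à la Matsusaka) do not a priori preserve the pairing $b_2$. Overcoming this descent issue is the step I expect to be most delicate.
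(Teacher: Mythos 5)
The statement you set out to prove is stated in the paper as a \emph{conjecture}, and the paper itself gives no general proof: it establishes only the case $g=1$ (by an explicit Weierstrass-model computation identifying the sum over the nontrivial $2$-torsion with the trace form $f\mapsto \tr_{k[x]/(p)/k}(f^2/p')$, evaluated by Serre's lemma as $2\langle1\rangle+\langle-1\rangle$) and the case of fields over which every binary quadratic form is universal (by computing the discriminant of the left-hand side in \Cref{lem:determinant} and using that rank and determinant then classify forms). Your proposal is likewise not a proof, and to your credit you say so explicitly. What you do verify is correct: the rank count $2^{2g}$, the agreement of real signatures via \Cref{thm:signed2tor} together with the observation that non-real points contribute hyperbolic planes, and the identification of $2^{g-1}(2^g\pm1)$ with the numbers of even and odd theta characteristics. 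These are genuine consistency checks — indeed they are the reason the conjecture is plausible — but rank and signature determine an element of $\GW(k)$ only for very special $k$, so nothing beyond consistency has been established.

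Concretely, the gaps are these. First, the central mechanism of \Cref{thm:signed2tor} — a $k$-rational Lagrangian on which $q_2$ is trivial, plus a pairing of the remaining orbits by rational translations — has no analogue over a general field, as you note; but neither of your proposed repairs is carried out, and both face real obstructions: there is no local-global principle for $\GW(k)$ over an arbitrary field, and even over a number field you would still have to prove the conjecture over every completion (including the $p$-adic and finite-field cases, which your argument does not touch), while the fundamental-ideal filtration approach requires computing invariants you never compute. Second, the one invariant beyond rank and signature that \emph{is} accessible — the discriminant — is exactly what the paper computes: expanding $q_2$ over a symplectic basis via \Cref{thm:q2prop} gives $\prod_{a\in A_2(k)}q_2(a)=(-1)^{2^{2(g-1)}}$ when all $2$-torsion is rational, and together with the rank count this already settles the conjecture over fields where binary forms are universal; your proposal mentions the discriminant only as an item on a to-do list. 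Third, the $g=1$ case, which is provable by the direct computation that $q_2(a_i)$ is the square class of $p'(z_i)$, is not attempted. In short, your diagnosis of where the difficulty lies is accurate, but the proposal proves neither the conjecture nor the special cases that are actually within reach.
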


We conclude this section by proving the conjecture in a few special cases.

\begin{prop}
 \Cref{con:main} is true when $g=1$.
\end{prop}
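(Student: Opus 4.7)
The plan is to pick a Weierstrass model of $E$, compute $b_2(P,\lambda(P))$ at every non-trivial $2$-torsion point $P$ by an explicit chord-construction calculation, and then collect the contributions using the classical Scheja--Storch trace formula (equivalently, the global $\A^1$-degree of a polynomial $\A^1\to\A^1$).

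Since $\Char(k)\neq 2$, I fix a Weierstrass model $y^2=f(x)$ of $E$ with $f\in k[x]$ monic separable of degree $3$. Then $E_2=\{O\}\sqcup\Spec R$ where $R=k[x]/(f)$, and the non-origin $2$-torsion points are of the form $P=(\alpha,0)$ with $\alpha$ a root of $f$. At the origin, $\lambda(O)=0$ and the contribution to the left-hand side of \Cref{con:main} is simply $\langle 1\rangle\in\GW(k)$.

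For a non-origin $2$-torsion point $P=(\alpha,0)$, I would compute $b_2(P,\lambda(P))\in\kappa(P)^\times/(\kappa(P)^\times)^2$ over $\kappa(P)=k(\alpha)$ via \Cref{thm:curvepairing}. Setting $D_2=[P]-[O]$, the rational function $x-\alpha$ has divisor $2D_2$; to ensure disjoint supports I would take $D_1=[P+Q]-[Q]$ for a generic point $Q\in E(\overline{\kappa(P)})$, which is linearly equivalent to $[P]-[O]$ by translation. The standard chord construction for the group law on $y^2=f(x)$, together with the $2$-torsion identity $-P=P$, gives
\begin{equation*}
 x(P+Q)-\alpha=\frac{f'(\alpha)}{x(Q)-\alpha}.
\end{equation*}
Consequently,
\begin{equation*}
 b_2(P,\lambda(P))=\frac{x(Q)-\alpha}{x(P+Q)-\alpha}=\frac{(x(Q)-\alpha)^2}{f'(\alpha)}\equiv f'(\alpha)\pmod{(\kappa(P)^\times)^2},
\end{equation*}
and by the identification of $b_2$ with the local $\A^1$-degree in \Cref{sec:orienta1} we obtain $\deg_P^{\A^1}(\id_{\kappa(P)},\lambda(P))=\langle f'(\alpha)\rangle\in\GW(\kappa(P))$.

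Collecting contributions, the left-hand side of \Cref{con:main} becomes $\langle 1\rangle+\Tr_{R/k}\langle f'(\alpha)\rangle$, where $\alpha$ denotes the image of $x$ in $R$. Since $\langle\beta\rangle=\langle 1/\beta\rangle$ in $\GW(R)$ for any unit $\beta$, this equals $\langle 1\rangle+\Tr_{R/k}\langle 1/f'(\alpha)\rangle$; the trace form $\Tr_{R/k}\langle 1/f'(\alpha)\rangle$ is the classical Scheja--Storch form of $f$, equivalently the global $\A^1$-degree of $f\colon\A^1\to\A^1$, which for $f$ monic separable of degree $n$ equals $\lfloor n/2\rfloor\cdot\cH+(n\bmod 2)\cdot\langle 1\rangle$ with $\cH=\langle 1\rangle+\langle-1\rangle$. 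For $n=3$ this yields $\cH+\langle 1\rangle$, so the full sum is $\langle 1\rangle+\cH+\langle 1\rangle=3\langle 1\rangle+\langle-1\rangle$, matching $2^{g-1}((2^g+1)\langle 1\rangle+(2^g-1)\langle-1\rangle)$ for $g=1$. The main step is the identification $b_2(P,\lambda(P))\equiv f'(\alpha)$ modulo squares; the Scheja--Storch identity used in the final step can alternatively be derived for the cubic at hand by direct diagonalization of the $3\times 3$ Gram matrix of $(a,b)\mapsto\tr_{R/k}(ab/f'(\alpha))$ in the monomial basis $1,\alpha,\alpha^2$.
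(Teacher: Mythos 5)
Your proof is correct and follows essentially the same route as the paper: both reduce the claim to showing that the local contribution at a nontrivial $2$-torsion point $(\alpha,0)$ is $\langle f'(\alpha)\rangle=\langle 1/f'(\alpha)\rangle$, and then identify the sum of the traces with the form $a\mapsto \tr_{R/k}(a^2/f')$ on $R=k[x]/(f)$, which equals $2\langle 1\rangle+\langle-1\rangle$ (the paper quotes Serre's local-fields lemma where you quote the Scheja--Storch/global $\A^1$-degree computation; these are the same statement). The only substantive difference is the middle step: the paper evaluates a function of the form $(x-z_j)(x-z_k)/(x-z)^2$, whose half-divisor already represents $\lambda(a_i)$ while avoiding $a_i$ and $O$, directly at those two points, whereas you keep $f=x-\alpha$ and instead move $D_1$ by an auxiliary point $Q$ via the chord construction (your identity $x(P+Q)-\alpha=f'(\alpha)/(x(Q)-\alpha)$ checks out). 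One caveat about that step: for \Cref{thm:curvepairing} the divisor $D_1=[P+Q]-[Q]$ must be Galois-stable over $\kappa(P)$, so $Q$ has to be a $\kappa(P)$-rational point with $Q\notin\{O,P\}$, and such a point need not exist (e.g.\ an elliptic curve over $\Q$ whose Mordell--Weil group is $\Z/2\Z$ generated by $P$). This is easily repaired --- take $Q$ to be the generic point and work over $\kappa(P)(E)$, using that $\kappa(P)^\times/(\kappa(P)^\times)^2$ injects into $\kappa(P)(E)^\times/(\kappa(P)(E)^\times)^2$ for this regular extension; or fall back on the paper's choice of representing function --- but as written the computation presupposes a rational $Q$.
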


\begin{proof}
 Let $(E,0)$ be an elliptic curve in Weierstrass normal form 
 \begin{equation*}
  y^2=p(x)
 \end{equation*}
 where $p$ is a monic polynomial of degree three. The principal polarization $\lambda$ of $E$ sends a point $a$ to the divisor class of $a-0$.
 Let $z_1,z_2,z_3\in\bar{k}$ the zeros of $p$. The non-trivial $2$-torsion points of $E(\bar{k})$ are given by $a_i=(z_i,0)$ for $i=1,2,3$. Further, if $\{i,j,k\}=\{1,2,3\}$ and $z\in k$ with $z\neq z_i$, then the rational function $f=\frac{(x-z_j)(x-z_k)}{(x-z)^2}$ is defined over $k(z_i)=\kappa(a_i)$ and $\frac{1}{2}\divv(f)$ is linearly equivalent to $a_i-0$. Thus we can compute $\deg_{a_i}^{\A^1}(\id_{\kappa(a_i)},\lambda(a_i))$ as the square class of
 \begin{equation*}
  \frac{(z_i-z_j)(z_i-z_k)}{(z_i-z)^2}=(z_i-z_j)(z_i-z_k)=p'(z_i)=\frac{1}{p'(z_i)}
 \end{equation*}
 where $p'$ is the derivative of $p$. Letting $R=k[x]/(p)$ we thus have that \begin{equation*} \sum_{0\neq a\in A_2}\Tr_{\kappa(a)/k}(\deg_{a}^{\A^1}(\id_{\kappa(a)},\lambda(a)))\end{equation*} equals the isometry class of the quadratic form \begin{equation*}R\to k,\, f\mapsto\tr_{A/k}\left(\frac{f^2}{p'}\right).\end{equation*} By \cite[Lemma III.6.2]{serreloc} this is $2\langle1\rangle+\langle-1\rangle$. Since the local degree at $0$ is $\langle1\rangle$ by definition, this implies the claim.
\end{proof}

\begin{lem}\label{lem:determinant}
Let $A$ be a principally polarized abelian variety of dimension $g$ over a field $k$ of $\Char(k)\neq2$.
Assume that all $2$-torsion points of $A$ are $k$-rational. Then
 \begin{equation*}
  \prod_{a\in A_2(k)} q_2(a)=(-1)^{2^{2(g-1)}}\in k^\times/(k^\times)^2.
 \end{equation*}
\end{lem}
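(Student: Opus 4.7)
The plan is to expand $q_2$ along a chosen basis of $A_2(k)$ using its quadratic-form property (\Cref{thm:q2prop}), and then observe that summing over all $a\in A_2(k)$ makes every cross-term appear with even multiplicity, except in the trivial low-dimensional case.

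Since every $2$-torsion point is $k$-rational, $A_2(k)\cong\Z_2^{2g}$ as an $\F_2$-vector space; pick a basis $e_1,\ldots,e_{2g}$. Iterating \Cref{thm:q2prop} (and viewing $e_2(\cdot,\lambda(\cdot))\in\mu_2(k)=\{\pm1\}$ inside $k^\times/(k^\times)^2$ via the natural map) yields, for every subset $S\subseteq\{1,\ldots,2g\}$,
\begin{equation*}
 q_2\!\left(\sum_{i\in S}e_i\right)=\prod_{i\in S}q_2(e_i)\cdot\prod_{\substack{i,j\in S\\ i<j}}e_2(e_i,\lambda(e_j))\in k^\times/(k^\times)^2.
\end{equation*}
Multiplying this identity over all $2^{2g}$ subsets $S$ (one per element of $A_2(k)$), the factor $q_2(e_i)$ occurs $2^{2g-1}$ times and the factor $e_2(e_i,\lambda(e_j))$ occurs $2^{2g-2}$ times. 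Hence
\begin{equation*}
 \prod_{a\in A_2(k)}q_2(a)=\prod_{i}q_2(e_i)^{2^{2g-1}}\cdot\prod_{i<j}e_2(e_i,\lambda(e_j))^{2^{2g-2}}\in k^\times/(k^\times)^2.
\end{equation*}

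For $g\geq 2$ both exponents $2^{2g-1}$ and $2^{2g-2}$ are even, so every factor is a square and the product is $1$; this agrees with $(-1)^{2^{2(g-1)}}=1$. For $g=1$ the formula collapses to $q_2(e_1)^{2}q_2(e_2)^{2}\cdot e_2(e_1,\lambda(e_2))=e_2(e_1,\lambda(e_2))$, and since the symplectic form $e_2(\cdot,\lambda(\cdot))$ on the two-dimensional $\F_2$-space $A_2(k)$ is non-degenerate, any basis pairs to $-1\in\mu_2(k)$, matching $(-1)^{2^{0}}=-1$.

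The only genuine subtlety — and the part I would write most carefully — is keeping straight that $e_2$-values live in $\mu_2(k)$ but are being multiplied inside $k^\times/(k^\times)^2$; once this is acknowledged, the fact that squares are trivial in $k^\times/(k^\times)^2$ makes the bookkeeping essentially automatic, so no real obstacle remains beyond the even/odd counting of exponents.
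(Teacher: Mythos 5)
Your proof is correct and follows essentially the same route as the paper: iterate \Cref{thm:q2prop} to expand $q_2$ over a basis, multiply the resulting identity over all of $A_2(k)$, and reduce everything to a parity count of the exponents. The only cosmetic difference is that the paper works with a symplectic basis, so the cross-terms collapse to $(-1)^{|S|\cdot|T|}$ and all $g$ are handled uniformly, whereas you take an arbitrary basis and settle $g=1$ separately via nondegeneracy of the alternating form $e_2(\cdot,\lambda(\cdot))$.
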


\begin{proof}
 By \Cref{thm:q2prop} we have for all $c_1,\ldots,c_n\in A_2(k)$ that
 \begin{equation*}
  q_2(c_1+\ldots+c_n)=\prod_{1\leq i<j\leq n}e_2(c_i,c_j)\cdot\prod_{l=1}^n q_2(c_l)\in k^\times/(k^\times)^2.
 \end{equation*}
 Let $a_1,\ldots,a_g,b_1,\ldots,b_g$ be a symplectic basis of $A_2(k)$. Then:
\begin{equation*}
   \prod_{a\in A_2(k)} q_2(a)=\prod_{S\subset[g],T\subset[g]}(-1)^{|S|\cdot|T|}\left(\prod_{i\in S}q_2(a_i)\prod_{i\in T}q_2(b_i)\right)
\end{equation*}
where $[g]=\{1,\ldots,g\}$. Each $q_2(a_i)$ and each $q_2(b_i)$ appears $2^{2g-1}$ times in the product which is an even number. Thus the product equals
\begin{equation*}
 \prod_{S\subset[g],T\subset[g]}(-1)^{|S|\cdot|T|}.
\end{equation*}
The number of subsets of $[g]$ with an odd number of elements is $2^{g-1}$ which implies the claim.
\end{proof}

\begin{cor}\label{cor:det}
 Let $A$ be a principally polarized abelian variety of dimension $g$ over a field $k$ of $\Char(k)\neq2$.
Assume that all $2$-torsion points of $A$ are $k$-rational. If every binary form over $k$ is universal, then \Cref{con:main} is true for $A$.
\end{cor}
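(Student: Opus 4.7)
The plan is to reduce the identity predicted by \Cref{con:main} to matching two invariants, rank and discriminant, on both sides, exploiting that the hypothesis on binary forms is equivalent to the vanishing $I^2 = 0$ of the square of the fundamental ideal in $\GW(k)$. First, since every $2$-torsion point is $k$-rational, each residue field $\kappa(a)$ equals $k$ and every trace occurring in \Cref{con:main} is the identity; by the description of the local $\A^1$-degree at a $k$-rational point in \Cref{sec:orienta1}, we have $\deg_a^{\A^1}(\id_{\kappa(a)}, \lambda(a)) = \langle q_2(a) \rangle$, and the conjecture is equivalent to the identity
\begin{equation*}
 \sum_{a \in A_2(k)} \langle q_2(a) \rangle = 2^{g-1}\bigl((2^g+1)\langle 1 \rangle + (2^g-1)\langle -1 \rangle\bigr) \in \GW(k).
\end{equation*}

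Next I invoke the classical equivalence between every binary form over $k$ being universal and $I^2 = 0$: the universality of $\langle a, b \rangle$ is equivalent to the isometry $\langle a, b \rangle \cong \langle 1, ab \rangle$, which says precisely that the $2$-fold Pfister form $\langle\langle -a, -b \rangle\rangle$ is hyperbolic, i.e.\ zero in $I^2$. Under our hypothesis the Pfister isomorphism $I/I^2 \cong k^{\times}/(k^\times)^2$ therefore upgrades to $I \cong k^{\times}/(k^\times)^2$, so any element of $\GW(k)$ is determined by its rank together with its discriminant. It thus suffices to match rank and discriminant on both sides of the displayed identity.

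The ranks agree trivially: both sides have rank $2^{2g}$. The discriminant of the left-hand side is $\prod_{a \in A_2(k)} q_2(a) = (-1)^{2^{2(g-1)}}$ by \Cref{lem:determinant}, while the discriminant of the right-hand side is $(-1)^{2^{g-1}(2^g-1)}$, obtained by counting the $\langle -1 \rangle$ summands. A routine parity check shows the two exponents are both odd for $g = 1$ and both even for $g \geq 2$, so the discriminants agree in $k^{\times}/(k^\times)^2$ for every $g \geq 1$.

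The only step requiring more than bookkeeping is the classical reduction to rank and discriminant via $I^2 = 0$, which is standard and can be cited from \cite{serreloc} or Lam's book. Once this is in place, everything else is formal and no substantial obstacle is anticipated.
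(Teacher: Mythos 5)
Your proof is correct and follows essentially the same route as the paper: reduce to matching rank and determinant (the paper cites \cite[Theorem~II.3.5]{lam} directly for the fact that these invariants classify forms when every binary form is universal, rather than re-deriving it via $I^2=0$), and then apply \Cref{lem:determinant}. Your explicit computation of the right-hand determinant and the parity check are details the paper leaves implicit, but there is no substantive difference.
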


\begin{proof}
 By \cite[Theorem~II.3.5]{lam} two quadratic forms over $k$ are isometric if they have the same dimension and determinant. The determinants of both sides in the equality in \Cref{con:main} agree by \Cref{lem:determinant}.
\end{proof}

\begin{rem}
 The assumption in \Cref{cor:det} that every binary form over $k$ is universal is for example satisfied by every finite field and every nonreal field of transcendence degree $1$ over a real closed field \cite[Example~XI.6.2]{lam}.
\end{rem}


 \noindent \textbf{Acknowledgements.}
I would like to thank Philip Dittmann for pointing me towards Weil's reciprocity law.

 \bibliographystyle{alpha}
 \bibliography{biblio}
 \end{document}